\date{January 23, 2023}
\newtheorem{dummy}{anything}[section]
\newtheorem{theorem}[dummy]{Theorem}
\newtheorem*{thma}{Theorem A}
\newtheorem*{thmaa}{Theorem A$'$}
\newtheorem*{thmb}{Theorem B}
\newtheorem*{thmc}{Theorem C}
\newtheorem{lemma}[dummy]{Lemma}
\newtheorem{proposition}[dummy]{Proposition}
\newtheorem{corollary}[dummy]{Corollary}
\theoremstyle{definition}
\newtheorem{definition}[dummy]{Definition}
  \newtheorem{example}[dummy]{Example}
  \newtheorem{remark}[dummy]{Remark}
    \newtheorem*{question}{Question}
  \newtheorem*{acknowledgement}{Acknowledgement}
\newcommand
{\eqncount}{\setcounter{equation}{\value{dummy}}%
\addtocounter{dummy}{1}}
\newcommand{\cB}{\mathcal B}
\newcommand{\cF}{\mathcal F}
\newcommand{\bZ}{\mathbb Z}
\newcommand{\bF}{\mathbb F}
\newcommand{\bQ}{\mathbb Q}
\newcommand{\bbR}{\mathbb R}
\newcommand{\bK}{\mathbb K}
\newcommand{\wX}{\widetilde X}
\newcommand{\wK}{\widetilde K}
\newcommand{\scF}{\mathscr F}
\newcommand{\wH}{\widehat H}
\DeclareMathOperator{\Hom}{Hom}
\DeclareMathOperator{\Image}{im}
 \DeclareMathOperator{\Ext}{Ext}
\DeclareMathOperator{\Def}{Def}
 \DeclareMathOperator{\rk}{rank}
\newcommand{\cy}[1]{\bZ/{#1}\bZ}
\newcommand{\la}{\langle}
\newcommand{\ra}{\rangle}
\newcommand{\bd}{\partial}
\newcommand{\id}{\mathrm{id}}
\newcommand{\La}{\Lambda}
\newcommand{\mmatrix}[4]{\bigg (\hskip-4pt\vcenter
{\xymatrix@C-2pc@R-2pc{#1&#2\\#3&#4} }\hskip-2pt
\bigg )}
\newcommand{\fake}{{\textup{D2}-complex}}
\newcommand{\ZG}{\bZ G}
\DeclareMathOperator{\Met}{Met}
\newcommand{\wN}{\widehat N}
\DeclareMathOperator{\adj}{ad}
\DeclareMathOperator{\Sharp}{\sharp}
\begin{document}

\title[Minimal Euler Characteristics]{Minimal Euler Characteristics for Even-Dimensional Manifolds with Finite Fundamental Group}

\author{Alejandro Adem}
\address{\vbox{\hbox{Department of Mathematics, The University of British Columbia} \hbox{Vancouver, British Columbia, Canada}}}
\email{adem@math.ubc.ca}

\author{Ian Hambleton}
\address{\vbox{\hbox{Department of Mathematics \& Statistics, McMaster University}\hbox{Hamilton, Ontario L8S 4K1, Canada}}}

\email{hambleton@mcmaster.ca }

\thanks{The second author was supported by an NSERC Discovery Grant.}

\begin{abstract} 
We consider the Euler characteristics $\chi(M)$  of closed orientable topological $2n$--manifolds with $(n-1)$--connected universal cover and  a given fundamental group $G$ of type $F_n$. We define
$q_{2n}(G)$, a generalized version of the Hausmann-Weinberger invariant \cite{Hausmann:1985} for 4--manifolds, as the minimal value of $(-1)^n\chi (M)$.
For all $n\geq 2$, we establish a strengthened and extended 
version of their estimates, in terms of
explicit cohomological invariants of $G$. 
As an application we obtain new restrictions for non-abelian finite groups arising as fundamental
groups of rational homology 4--spheres. 
\end{abstract}

\maketitle

\tableofcontents
\section{Introduction}\label{sec:one}
In this paper we address the following problem: if $M$ denotes a closed, orientable even-dimensional manifold with a given fundamental
group $G$, then what restriction does this impose on the Euler characteristic of $M$~? In the particular case when
$\chi (M)=2$ we have the related problem of determining which finite groups can be the fundamental group of a closed topological
$2n$-manifold $M$ with the rational homology of the $2n$-sphere (see previous work on the $4$-dimensional case by Hambleton-Kreck \cite{hk2} and Teichner \cite{Teichner:1988}). 

We introduce the following invariant  for discrete groups, extending a definition due to Hausmann and Weinberger \cite{Hausmann:1985} for 4--manifolds: 
\begin{definition}
Given a finitely presented  group $G$, define $q_{2n}(G)$ as the minimum value of $(-1)^n\chi (M)$
for a closed orientable
$2n$--manifold $M$ with $(n-1)$--connected universal cover, such that $\pi_1(M)=G$. 
\end{definition}

We will first assume that $G$ is a finite group. Recall  that Swan  \cite[p.~193]{Swan:1965} defined an invariant $\mu_k(G)$, for each $k \geq 1$,  by the condition that
 $(-1)^k|G|\mu_k (G)$ 
 is the minimal value over all partial Euler characteristics of
a free  resolution of $\bZ$ truncated after degree $k$. We call this a $k$-step resolution.  However, since projective $\ZG$-modules are locally free \cite[\S 8]{Swan:1960a}, $k$-step projective resolutions can be used instead to define $\mu'_k(G) \leq \mu_k(G)$  (see  \cite[Remark, p.~195]{Swan:1965}).

 Let $e_n(G)$ denote the least integer greater than or equal to all the numbers 
$$\dim H^n(G,\bF) - 2 \left ( \dim H^{n-1}(G,\bF) - \dim H^{n-2}(G,\bF) + \dots + (-1)^{n-1}\dim H^0(G,\bF)\right )$$
where the coefficients range over $\bF = \bQ$ or   $\bF = \bF_p$ for all primes $p$.
Our main result is the following:

\begin{thma}\label{thma} If $G$ is a finite group and $n \geq 2$,   then
$$\max \{e_n(G), \mu'_{n}(G)-\mu'_{n-1}(G)\}\le q_{2n}(G)\le 2 \mu_{n}(G).$$
\end{thma}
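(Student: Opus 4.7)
The plan is to establish the three inequalities comprising Theorem A separately. The $(n-1)$-connectedness of $\widetilde M$ has two key consequences: the classifying map $M\to BG$ is an $n$-equivalence, and the cellular chain complex $C_*(\widetilde M)$ of finitely generated free $\bZ G$-modules has truncations $(C_0,\dots,C_{n-1})$ and $(C_0,\dots,C_n)$ forming partial projective resolutions of $\bZ$.

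For the lower bound $q_{2n}(G)\ge e_n(G)$, I would fix any field $\bF\in\{\bQ\}\cup\{\bF_p:p\text{ prime}\}$. The $n$-equivalence $M\to BG$ gives $\dim H^i(M;\bF)=\dim H^i(G;\bF)$ for $i<n$ and $\dim H^n(M;\bF)\ge \dim H^n(G;\bF)$, while Poincar\'e duality $\dim H^{2n-i}(M;\bF)=\dim H^i(M;\bF)$ extends the equality to $i>n$. Substituting into $(-1)^n\chi(M)=\sum_{i=0}^{2n}(-1)^{n+i}\dim H^i(M;\bF)$ and maximizing over $\bF$ reproduces $e_n(G)$.

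For the upper bound $q_{2n}(G)\le 2\mu'_n(G)$, I would take an $n$-step projective resolution of $\bZ$ realizing $\mu'_n(G)$, stabilize it to a free resolution using that projective $\bZ G$-modules are locally free, and realize the stabilized resolution as the handle chain complex of a compact $2n$-dimensional handlebody $N$ with $\pi_1(N)=G$, handles only in indices $0,\dots,n$, and $\chi(N)=(-1)^n\mu'_n(G)$; iterated Hurewicz then makes $\widetilde N$ into an $(n-1)$-connected space. The double $M:=N\cup_{\partial N}N$ is a closed orientable $2n$-manifold with $\pi_1(M)=G$ and $(n-1)$-connected universal cover, and $\chi(M)=2\chi(N)$ since $\partial N$ is an odd-dimensional closed manifold with $\chi(\partial N)=0$.

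For the lower bound $q_{2n}(G)\ge \mu'_n(G)-\mu'_{n-1}(G)$, I would pass to a CW structure on $M$ satisfying the self-duality $n_i=n_{2n-i}$, where $n_i:=\rk_{\bZ G}C_i(\widetilde M)$, via the dual cell decomposition. Writing $\tilde\chi_k(M):=\sum_{i=0}^k(-1)^{k-i}n_i$ for Swan's partial Euler characteristic, the two partial projective resolutions yield $\tilde\chi_{n-1}(M)\ge\mu'_{n-1}(G)$ and $\tilde\chi_n(M)\ge\mu'_n(G)$, and self-duality gives the identity $(-1)^n\chi(M)=\tilde\chi_n(M)-\tilde\chi_{n-1}(M)$. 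The hard part is that subtracting these two lower bounds does not directly yield the desired inequality; one has to additionally show that the defect $\delta_k:=\tilde\chi_k(M)-\mu'_k(G)$ satisfies $\delta_n\ge\delta_{n-1}$. The plan is to replace $(C_0,\dots,C_n)$ within its chain-homotopy class by an $n$-step partial projective resolution $(Q_0,\dots,Q_n)$ with $\tilde\chi_{n-1}(Q_*)=\mu'_{n-1}(G)$, using Schanuel's lemma and Swan's minimal-projective-resolution machinery; then invariance of the topological quantity $(-1)^n\chi(M)=\tilde\chi_n(Q_*)-\tilde\chi_{n-1}(Q_*)$ together with $\tilde\chi_n(Q_*)\ge\mu'_n(G)$ gives the required bound.
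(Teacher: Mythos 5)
Your first inequality, $e_n(G)\le q_{2n}(G)$, is argued exactly as in the paper (Lemma \ref{lem:twoone}), so I will concentrate on the other two, where there are genuine gaps.

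For the lower bound $\mu'_n(G)-\mu'_{n-1}(G)\le q_{2n}(G)$ the final step of your argument fails. The quantity $\tilde\chi_n$ of the truncation $(C_0,\dots,C_n)$ is, up to sign, the Euler characteristic of that complex, hence a chain homotopy invariant (projectives over $\ZG$ are locally free, so ranks are additive). Therefore when you replace the truncation by $(Q_0,\dots,Q_n)$ with $\tilde\chi_{n-1}(Q_*)=\mu'_{n-1}(G)\le\tilde\chi_{n-1}(C_*)$, you necessarily keep $\tilde\chi_n(Q_*)=\tilde\chi_n(C_*)$, so $\tilde\chi_n(Q_*)-\tilde\chi_{n-1}(Q_*)\ge\tilde\chi_n(C_*)-\tilde\chi_{n-1}(C_*)=(-1)^n\chi(M)$. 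The ``invariance'' you invoke is false, and the inequality comes out in the wrong direction: the identity $(-1)^n\chi(M)=\tilde\chi_n-\tilde\chi_{n-1}$ depends on the self-dual cell structure of $M$ itself and is not inherited by an algebraic replacement of the bottom half. The paper proves this bound (Theorem \ref{thm:invariant}) by a different, module-theoretic argument: Poincar\'e duality gives a stable extension $0\to\Omega^{n+1}(\bZ)\to H_n(\widetilde M;\bZ)\to\Omega^{-n-1}(\bZ)\to0$, hence an exact sequence $0\to\Omega^{-n}(\bZ)\oplus Q_s\to\Omega^{n+1}(\bZ)\oplus Q_r\to H_n(\widetilde M;\bZ)\to0$ with $Q_r,Q_s$ projective; comparing $\bZ$-ranks of $G$-invariants gives $s-r=\mu'_n(G)-\mu'_{n-1}(G)+(-1)^{n+1}\chi(M)$, and $s\le r$ because $s>r$ would split off a representative of $\Omega^{n+1}(\bZ)$ whose invariants have too small a rank, contradicting the minimality defining $\mu'_n(G)$. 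The chain-level route you sketch is essentially how the paper treats \emph{infinite} groups (Theorem \ref{thm:infone}), where the $(n-1)$-skeleton is put in Wall's normal form geometrically and one only obtains the weaker bound $\mu_n(G)-\mu''_{n-1}(G)$.

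The upper bound also needs repair at two points. First, for $n=2$, realizing a minimal $2$-step free resolution as the handle complex of a $4$-dimensional handlebody with handles of index $\le 2$ amounts to realizing it by a presentation of $G$ of deficiency $1-\mu_2(G)$; this is exactly Wall's open D2 problem. The paper instead realizes the resolution by a finite \fake\ (a priori $3$-dimensional) and then needs the topological surgery argument of Appendix~A (valid since finite groups are good) to produce a closed $4$-manifold $M(X)$ with $\chi(M(X))=2\chi(X)$. Second, in the exceptional case where $\mu'_n(G)<\mu_n(G)$ (so $n$ is odd and $G$ is periodic of period dividing $n+1$), stabilizing your projective resolution to a free one necessarily raises the partial Euler characteristic to at least $\mu_n(G)$, so the handlebody double only yields $q_{2n}(G)\le 2\mu_n(G)=2$ rather than the claimed $2\mu'_n(G)=0$; the paper achieves $\chi=0$ there by a different construction, namely a finitely dominated Swan complex $X$ with $\widetilde X\simeq S^n$, the finiteness of the Poincar\'e complex $X\times S^n$, and surgery to realize it by a closed manifold.
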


\begin{remark}\label{rem:except}  By \cite[Theorem 5.1]{Swan:1965}, $\mu'_k(G) = \mu_k(G)$  unless $G$ has periodic cohomology of (necessarily even) period dividing $k+1$,  and $G$ admits no periodic free resolution of period $k+1$.  In this case $k\geq 3$ is odd, and we will  say that the pair $(G,k)$ is \emph{exceptional} (see  Remark \ref{rem:excepttwo}).  For example, 
$\mu'_3(G) < \mu_3(G)$ for some of the $4$-periodic groups $G = Q(8p,q)$ in Milnor's list (see  the calculations in \cite{Madsen:1983,Milgram:1985}). If  $(G,n)$ is an exceptional pair, we provide information about  $q_{2n}(G)$ in Theorem B and Remark \ref{rem:exceptthree} below.  
\end{remark}

The invariants $e_n(G)$ and the  $\mu_k(G)$, for $1\leq k \leq n$,  can also be defined for infinite discrete groups of \emph{type $F_n$}, meaning that there is a model for $K(G,1)$ with finite $n$-skeleton. 
In this case, we obtain similar estimates with a slightly weaker lower bound. Recall that a finitely presented group $G$ is said to be \emph{good} if topological surgery  with fundamental group $G$ holds in dimension four (see Freedman-Quinn \cite[p.~99]{freedman-quinn1}).

\begin{thmaa}\label{thmas}
If $G$ is an infinite discrete group of type $F_n$ with $n\geq 2$,   then
$$\max \{e_n(G), \mu_{n}(G)-\mu''_{n-1}(G)\}\le q_{2n}(G).$$
If  $n \geq 3$, or $n=2$ and $G$ is good, then $q_{2n}(G)\le 2 \mu_{n}(G)$.
\end{thmaa}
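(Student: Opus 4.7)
The plan is to establish the lower and upper bounds separately. For the lower bound, fix a closed orientable $2n$-manifold $M$ with $\pi_1(M) = G$ and $(n-1)$-connected universal cover $\widetilde M$. To prove $e_n(G) \le (-1)^n\chi(M)$, note that since $\widetilde M$ is $(n-1)$-connected, the classifying map $M \to BG$ is an $n$-equivalence. Thus for every field $\bF \in \{\bQ\} \cup \{\bF_p : p \text{ prime}\}$, the map $H^i(G;\bF) \to H^i(M;\bF)$ is an isomorphism for $i<n$ and injective for $i=n$; since $G$ is of type $F_n$, these vector spaces are finite-dimensional. Combining with Poincaré duality and rearranging gives
\[
(-1)^n\chi(M) = \dim H^n(M;\bF) + 2\sum_{i=0}^{n-1}(-1)^{n-i}\dim H^i(G;\bF),
\]
and the inequality $\dim H^n(M;\bF)\ge \dim H^n(G;\bF)$, together with the supremum over $\bF$, yields $e_n(G) \le (-1)^n\chi(M)$. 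To prove $\mu_n(G) - \mu''_{n-1}(G) \le (-1)^n\chi(M)$, work with the equivariant cellular chain complex $C_* = C_*(\widetilde M)$ of finitely generated free $\bZ G$-modules of ranks $r_0,\dots,r_{2n}$. Since $\widetilde M$ is $(n-1)$-connected, $C_0,\dots,C_n$ is the start of a partial projective resolution of $\bZ$, so $\alpha := \sum_{i=0}^n (-1)^{n-i}r_i \ge \mu_n(G)$. The chain-level Poincaré duality equivalence $C_* \simeq \Hom_{\bZ G}(C_{2n-*},\bZ G)$ identifies $C_{n+1},\dots,C_{2n}$ with the initial segment of another partial projective resolution of $\bZ$, and the definition of $\mu''_{n-1}(G)$ (to be given in the paper body) then yields $\beta := \sum_{i=n+1}^{2n}(-1)^{n-i}r_i \ge -\mu''_{n-1}(G)$. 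Adding the two contributions gives $(-1)^n\chi(M) = \alpha + \beta \ge \mu_n(G) - \mu''_{n-1}(G)$.

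For the upper bound, the strategy is to realize $\mu_n(G)$ geometrically. I would choose a finite $n$-dimensional CW complex $X$ with $\pi_1(X) = G$, $(n-1)$-connected universal cover $\widetilde X$, and $(-1)^n\chi(X) = \mu_n(G)$; such $X$ exists because $G$ is of type $F_n$ and the minimum is realized by an appropriate modification of a finite $n$-skeleton of a $K(G,1)$. Embed $X$ into $\bbR^{2n+1}$ by general position, take a regular neighborhood $N$ (a compact $(2n+1)$-manifold with $N\simeq X$), and set $M := \partial N$. Then $M$ is a closed orientable $2n$-manifold with $\chi(M) = 2\chi(N) = 2\chi(X)$, so $(-1)^n\chi(M) = 2\mu_n(G)$. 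Since the codimension of $X$ in $N$ is $n+1 \ge 3$ for $n \ge 2$, the inclusion $\partial N \hookrightarrow N$ induces an isomorphism on $\pi_1$, so $\pi_1(M) = G$; and Lefschetz duality applied to $(\widetilde N, \widetilde{\partial N})$, together with $\widetilde N \simeq \widetilde X$ being $(n-1)$-connected of dimension $n$, forces $\widetilde{\partial N}$ to be $(n-1)$-connected as well.

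The main obstacle is the case $n = 2$: although the regular-neighborhood construction produces a smooth manifold in every dimension, realizing the algebraic minimum $\mu_n(G)$ by the Euler characteristic of a closed topological $4$-manifold of the required form requires the full strength of topological surgery in dimension four, and hence the hypothesis that $G$ be good in the sense of Freedman--Quinn. For $n \ge 3$ no such hypothesis is needed, since the relevant handle manipulations and embeddings are available in the smooth or PL category. A parallel subtlety in both the lower and upper bounds is that when $G$ is infinite the spaces $\widetilde M$ and $\widetilde N$ are noncompact; the Poincaré and Lefschetz duality arguments must accordingly be formulated using compactly supported cohomology, and finite-dimensionality of the relevant cohomology groups of $G$ follows from $G$ being of type $F_n$ in place of the finiteness of $G$ used in the analogous argument for Theorem A.
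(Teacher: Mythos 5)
The bound $e_n(G)\le (-1)^n\chi(M)$ is argued exactly as in the paper (Lemma \ref{lem:twoone}), and your upper bound for $n\ge 3$ is essentially the paper's thickening construction, modulo the nontrivial cited fact that a minimal $n$-step free resolution can be realized by a finite $K(G,n)$-complex when $n\geq 3$ (Proposition \ref{prop:threeone}); for $n=2$ you only name the obstacle, whereas the paper must pass through finite $3$-dimensional \fakes\ and topological surgery (Theorem \ref{prop:fakeembed}), since the minimal resolution need not be realized by an actual $2$-complex.

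However, your proof of the lower bound $\mu_n(G)-\mu''_{n-1}(G)\le (-1)^n\chi(M)$ contains a genuine error: the inequality you assert for $\beta$ points the wrong way. Writing $\beta=\sum_{i=n+1}^{2n}(-1)^{n-i}r_i$ and dualizing, the complex $C_{2n}^*\to\cdots\to C_{n+1}^*\to\bZ\to 0$ is an $(n-1)$-step free resolution, and one checks that $\beta=-\bigl((-1)^{n-1}\chi\bigr)$ of that resolution. Since $\mu_{n-1}(G)$ (and a fortiori $\mu''_{n-1}(G)$) is a \emph{minimum}, minimality only gives ``partial Euler characteristic $\ge$ the minimum,'' which translates into $\beta\le -\mu_{n-1}(G)$ --- the opposite of the $\beta\ge -\mu''_{n-1}(G)$ you need. (Moreover the dual resolution has no reason to be geometrically realizable, so even the comparison with $\mu''_{n-1}$ rather than $\mu_{n-1}$ is unjustified.) Adding $\alpha\ge\mu_n(G)$ to $\beta\le-\mu_{n-1}(G)$ yields nothing. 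The repair is the paper's argument (Theorem \ref{thm:infone}): one must obtain an \emph{equality} on one side. Replace $M$ by a homotopy equivalent finite complex $K$ in Wall's normal form, so that the $(n-1)$-skeleton satisfies $(-1)^{n-1}\chi(\wK^{(n-1)})=\mu''_{n-1}(G)$ exactly --- this is precisely where geometric realizability, and hence the invariant $\mu''$ rather than $\mu$, enters. Then the bottom half $C_0,\dots,C_{n-1}$ contributes exactly $-\mu''_{n-1}(G)$ to $(-1)^n\chi(M)$, while the top half $C_n,\dots,C_{2n}$ (note: including $C_n$), dualized and spliced using $H^i(C)=0$ for $n+1\le i\le 2n-1$ and $H^{2n}(C)=\bZ$, is an $n$-step free resolution of $\bZ$ whose partial Euler characteristic is $\ge\mu_n(G)$ by minimality. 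Your decomposition, which places $C_n$ in the bottom half and tries to extract the $\mu''_{n-1}$ bound from the top half via minimality alone, cannot be made to work.
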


The invariants  $\mu''_k(G) = \mu_k(G)$, for $k \geq 3$, and we define $\mu''_2(G) = 1 - \Def(G)$, and $\mu''_1(G) = d(G) -1$, where $\Def(G)$ is the \emph{deficiency} of $G$, defined as the maximum difference $d-r$ of  numbers of generators minus relations over all finite presentations of $G$ (see \cite{Epstein:1961}) and $d(G)$ denotes the minimal number of generators for $G$.
 These modifications to the previous invariants arise from the additional condition that the resolutions be \emph{geometrically realizable} 
(see Section \ref{sec:infinite}). For $k=2$, determining the relation between $\mu_2(G)$ and $1-\Def(G)$ is part of Wall's (unsolved) D2 problem \cite[Section 2]{Wall:1965}, which for infinite groups is related to the Eilenberg-Ganea conjecture \cite{Eilenberg:1957}.

\medskip
Our results sharpen and generalize the estimate proved by Hausmann-Weinberger  \cite[Th\'eor\`eme 1]
{Hausmann:1985}:
$$e_2(G) \le q_4(G) \leq 2(1- \Def(G)), $$
since $\mu_2(G) \leq (1- \Def(G))$ by \cite[Proposition 1]{Swan:1965}. The results of Kirk-Livingston \cite{Kirk:2005} for $q_4(\bZ^n)$ show that these bounds can  be improved for specific groups.

The proof of the lower bound in Theorem A for $q_{2n}(G)$  is given in Section \ref{sec:two}. 
In Section \ref{sec:three} we  establish the upper bound $q_{2n}(G) \leq 2\mu_n(G)$, by generalizing the well-known ``thickening" construction for groups $G$  which admit a \emph{balanced presentation} with equal numbers of generators and relations (i.e.~$\Def(G) =0$).  For $n=2$ this involves showing that  \emph{finite \fake es} with  \emph{good} fundamental groups (e.g.~groups of finite order)  admit suitable thickenings via methods from topological surgery 
(see Theorem \ref{prop:fakeembed}).

\begin{example}
For $E_k=(\bZ/p\bZ)^k$ an elementary abelian $p$-group, $e_n(E_k)=\mu_n(E_k)-\mu_{n-1}(E_k)$, and this number
can be explicitly computed using the Kunneth formula (see Example \ref{ex:threetwelve}). This can be used to show that
$q_{2n}(E_k)$ grows like a polynomial of degree $n$ in $k$, for example
$$\frac{k^4-2k^3+11k^2-34k+48}{24}
\le q_8(E_k)\le \frac{k^4+2k^3+11k^2-14k+24}{12}.$$ 
\end{example}

For $n$ even, $q_{2n}(G)\ge 2$, as the minimal possible Euler characteristic that can occur in our setting is $\chi(M) = 2$, which holds when $M$ has the  rational homology of a $2n$-sphere, and is implied by Theorem A if $\mu_n(G) =1$. 
The condition $\mu_2(G) = 1$ also holds for groups of deficiency zero and there are many groups with this property 
(see \cite{Wamsley:1970}).  In contrast, our computations for the  groups $E_k=(\bZ/p\bZ)^k$ show that 
$q_{4n}(E_k)>2$ for all $n>1$ and $k\ge3$. Hence higher dimensional rational homology spheres with elementary abelian fundamental group of rank larger than 2 cannot occur. 

For periodic groups we can compute $q_{2n}(G)$ in certain cases, which in particular provides an alternate argument for \cite[Corollary 4.4]{hk2} and generalizes that result to higher dimensions: 
\begin{thmb} Let $G$ be a finite periodic group of (even) period $q$. Then $q_{2n}(G) = 2$ if $q$ divides $n+2$, and $q_{2n}(G) = 0$ if $2q$ divides $n+1$. 
\end{thmb}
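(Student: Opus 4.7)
\emph{Plan.} The plan is to derive both cases from Theorem A applied to Swan's periodic projective resolution of $\bZ$ over $\bZ G$, supplemented by a Lefschetz-fixed-point argument for the lower bound in case (b). Since $q$ is even, the hypothesis $q\mid n+2$ forces $n$ to be even, while $q\mid n+1$ forces $n$ to be odd; the two cases are handled in parallel.

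\emph{Upper bounds via periodic resolutions.} A periodic group $G$ of period $q$ admits a $q$-periodic projective resolution of $\bZ$ over $\bZ G$ of the form
\[\cdots \to P_q \to P_{q-1} \to \cdots \to P_0 \to \bZ \to 0\]
with $P_0 \cong P_{q-1} \cong \bZ G$. Setting $d_i = \rk_{\bZ G} P_i$ and using that $q$ is even, exactness of $0 \to \bZ \to P_{q-1} \to \cdots \to P_0 \to \bZ \to 0$ forces $\sum_{i=0}^{q-1}(-1)^i d_i = 0$. By periodicity the partial Euler characteristic of the truncation at degree $n$ depends only on $r := n \bmod q$ and equals $\sum_{i=0}^r (-1)^i d_i$. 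In case (a), $r = q-2$ and this equals $d_{q-1} = 1$; in case (b), $r = q-1$ and the full-period sum gives $0$. Hence $\mu'_n(G) \le 1$ in case (a) and $\mu'_n(G) \le 0$ in case (b), and the upper bound $q_{2n}(G) \le 2\mu'_n(G)$ from Theorem A yields $q_{2n}(G) \le 2$ and $q_{2n}(G) \le 0$ respectively.

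\emph{Lower bound, case (a).} For finite $G$ we have $H^i(G;\bQ) = 0$ for $i > 0$ and $H^0(G;\bQ) = \bQ$, so the expression defining $e_n(G)$ over $\bF = \bQ$ reduces to $0 - 2(-1)^{n-1} = 2$ since $n$ is even. Thus $e_n(G) \ge 2$, and Theorem A gives $q_{2n}(G) \ge 2$.

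\emph{Lower bound, case (b).} For odd $n$ Theorem A only yields $e_n(G) \ge -2$ via $\bQ$, so a geometric argument is needed. Let $M$ be any closed orientable $2n$-manifold with $\pi_1(M) = G$ and $(n-1)$-connected universal cover $\tilde M$. The connectivity hypothesis together with Poincar\'e duality forces $H_i(\tilde M;\bZ) = 0$ for $0 < i < 2n$ with $i \ne n$, and $H_n(\tilde M;\bZ) \cong \bZ^r$ torsion-free; hence $\chi(\tilde M) = 2 - r$ and $\chi(M) = (2-r)/|G|$ must be an integer. I claim $r \ge 2$: if $r = 0$, then $\tilde M$ is an integral homology $2n$-sphere on which $G$ acts freely, and the Lefschetz fixed-point formula forces every nontrivial $g \in G$ to act on $H_{2n}(\tilde M) \cong \bZ$ by $-1$, contradicting the orientability of $M$; if $r = 1$, then $\chi(M) = 1/|G|$ is not an integer for $|G| \ge 2$. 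Consequently $\chi(M) \le 0$, so $(-1)^n\chi(M) \ge 0$ and $q_{2n}(G) \ge 0$. This case is the principal obstacle, since both of Theorem A's cohomological lower bounds are negative for periodic $G$ with $q \mid n+1$; closing the gap to $0$ depends essentially on the Lefschetz/orientability argument combined with the integrality of $\chi(M)$.
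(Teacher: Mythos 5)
Your overall architecture is sound and your conclusions are correct, but the upper bounds rest on an assertion that is not part of the definition of periodicity and is in fact the crux of the matter: that the $q$-periodic projective resolution can be normalized so that $P_{q-1}\cong \bZ G$ (equivalently $d_{q-1}=1$). The characterization of period $q$ only gives \emph{some} exact sequence $0\to\bZ\to P_{q-1}\to\cdots\to P_0\to\bZ\to 0$ with the $P_i$ projective; the standard splicing construction produces $P_{q-1}\cong \bZ G\oplus Q$, where $Q$ is a projective absorbing the discrepancy between the minimal syzygy $\Omega^{q-1}(\bZ)$ and $I(G)^*$, and $Q$ need not vanish. To get $d_{q-1}=1$ one splices $0\to\bZ\xrightarrow{\,N\,}\bZ G\to I(G)^*\to 0$ onto a projective resolution having $I(G)^*$ as its $(q-1)$-st kernel, and the existence of the latter requires Swan's realizability argument: any lattice in the stable class of a syzygy occurs as a syzygy of a \emph{projective} (not free) resolution. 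This is true but is exactly where the paper invests its effort --- its Lemma on periodic groups ($\mu_{2k}(G)=1$ when the period divides $2k+2$) is proved via Swan's finitely dominated Poincar\'e complex, Wall's normal form, and the Roiter replacement lemma, and the distinction between $\mu_n$ and $\mu'_n$ lives precisely here. As written, your ``exactness forces $\sum(-1)^id_i=0$, hence the truncation at $r=q-2$ contributes $d_{q-1}=1$'' is circular until $d_{q-1}=1$ is independently established. (A second, minor, point of reliance: your case (b) upper bound cites Theorem A's bound $q_{2n}(G)\le 2\mu'_n(G)$, whose proof in the exceptional case is not a thickening of a resolution at all but the geometric construction $X\times S^n$ for $X$ a Swan complex; citing Theorem A is legitimate, but be aware the geometric content is hidden there.)

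The remainder compares favourably with the paper. Your case (a) lower bound via $e_n(G)\ge 2$ over $\bQ$ is identical to the paper's. Your case (b) lower bound is a genuinely different and pleasantly elementary route: the paper derives $\chi(M)\le 0$ for $n$ odd from a mod $p$ cohomology estimate applied to a subgroup of prime order, whereas you get it from torsion-freeness of $H_n(\widetilde M)$, integrality of $\chi(M)=(2-r)/|G|$, and a Lefschetz/orientation argument ruling out $r=0$; both are correct (and you could shortcut $r=1$ by noting $r$ is even, the intersection form on $H^n(\widetilde M;\bQ)$ being nondegenerate and skew). Once the normalization $d_{q-1}=1$ is justified, your periodic-resolution arithmetic is a clean and arguably more transparent substitute for the paper's Lemma, since you only need the projective invariant $\mu'_n$ rather than the free invariant $\mu_n$.
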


\begin{remark}
Note that in our setting, $\chi(M)>0$ if and only if $n$ is even (see Corollary \ref{cor:euler}).  
Thus for $n$ odd, the minimal possible value of $q_{2n}(G)= - \chi(M)$ is zero.  Apart from the results of Theorem B for periodic groups with twice their period dividing $n+1$, any finite group $G$ which acts freely and homologically trivially on some product $S^n \times S^n$ will have $q_{2n}(G) = 0$. 
There are many such examples, including any products $G = G_1 \times G_2$  of periodic groups, many rank two finite $p$-groups, including the extra-special $p$-groups of order $p^3$, and all the finite odd order subgroups of the exceptional Lie group $G_2$ (see \cite{Hambleton:2006,Hambleton:2009,Hambleton:2010}). 
\end{remark}

\medskip
We are especially interested in the case of rational homology $4$-spheres (called $\bQ S^4$ manifolds) with finite fundamental group. 
In Section \ref{sec:four} we consider the following ``inverse" problem, for which the lower bound implies significant restrictions on $G$.

\begin{question} Which finite groups can be the fundamental group of a closed topological
$4$-manifold $M$ with the rational homology of the $4$-sphere ?
\end{question}
 For example, it was observed in \cite[p.~100]{hk2} that  if  $G$ is finite abelian, then $d(G) \leq 3$ (see Corollary \ref{cor:twoseven}).  This bound follows directly by estimating the Hausmann-Weinberger invariant $q_4(G)$. Moreover, Teichner \cite[4.13]{Teichner:1988} showed that this bound is best possible for abelian groups by explicit construction of examples.

\smallskip
Our methods  shed light on more complicated finite groups by making use of cohomology with twisted coefficients to obtain better
lower
bounds for $q_4(G)$: 

\begin{thmc}
Let $U_k=E_k\times_T C$ where $p$ is an odd prime, $E_k=(\bZ/pZ)^k$ and $C$ cyclic of order prime to $p$ acts on each $\bZ/p\bZ$ factor in $E_k$ via $x\mapsto x^q$ where 
$q$ is a unit in $\bZ/p\bZ$. 
\begin{enumerate} 
\item If $x^{q^2}\ne x$ for all $1\ne x\in E_k$, then for all $k>4$, $U_k$ does not arise as the
fundamental group of any rational homology 4--sphere.
\item If $q=p-1$, then for all $k>1$, $U_k$ does not arise as the
fundamental group of any rational homology 4--sphere. 
\end{enumerate}
\end{thmc}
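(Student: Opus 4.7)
My plan is to argue by contradiction: suppose $M$ is a closed orientable topological $4$--manifold with $H_*(M;\bQ)\cong H_*(S^4;\bQ)$ and $\pi_1(M)\cong U_k$. Then $\chi(M)=2$, so any lower bound of the form $q_4(U_k)>2$ furnishes a contradiction. The cohomology of $U_k$ will be computed via the Lyndon--Hochschild--Serre spectral sequence of $1\to E_k\to U_k\to C\to 1$: since $\gcd(|C|,p)=1$ this sequence degenerates and for every $\bF_p U_k$--module $V$ one has $H^n(U_k;V)\cong H^n(E_k;V)^C$. For $p$ odd, $H^*(E_k;\bF_p)\cong\Lambda(x_1,\dots,x_k)\otimes\bF_p[y_1,\dots,y_k]$ with $|x_i|=1$ and $|y_i|=2$, and since $C$ acts on $E_k$ by $x\mapsto x^q$ it acts on every cohomology generator by a character $\chi$ of $\bF_p^*$ of order $d$ (the multiplicative order of $q$ modulo $p$); a monomial $y^Ix^J$ therefore has $C$--weight $\chi^{|I|+|J|}$.

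For (ii), the hypothesis $q=p-1$ gives $d=2$, and with trivial $\bF_p$ coefficients the $C$--invariants in $H^n(E_k;\bF_p)$ are precisely the monomials of even total exponent. This yields $\dim H^0(U_k;\bF_p)=1$, $\dim H^1(U_k;\bF_p)=0$ and $\dim H^2(U_k;\bF_p)=\binom{k}{2}$ (from the products $x_ix_j$). Theorem A now gives
$$q_4(U_k)\ge e_2(U_k)\ge \binom{k}{2}+2,$$
which exceeds $2$ as soon as $k\ge 2$, providing the required contradiction.

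For (i) we have $d\ge 3$, and the untwisted mod--$p$ cohomology of $U_k$ vanishes in degrees $1$ and $2$, so Theorem A alone is vacuous. I will instead invoke the twisted refinement of the $q_4$ lower bound developed in Section \ref{sec:four}, applied to a well-chosen $1$--dimensional character. Specifically, let $V=V(\chi^{-2})$ be the $\bF_p U_k$--module on which $E_k$ acts trivially and $C$ acts by $\chi^{-2}$; the $C$--invariants in $H^n(E_k;\bF_p)\otimes V$ are those monomials $y^Ix^J$ with $|I|+|J|\equiv 2\pmod{d}$. Because $d\ge 3$, the only contributions in degrees $\le 2$ come from the products $x_ix_j$, so
$$\dim H^0(U_k;V)=0,\quad \dim H^1(U_k;V)=0,\quad \dim H^2(U_k;V)=\binom{k}{2}.$$
Plugging these numbers (and, if needed, the parallel values for the dual $V^*=V(\chi^2)$) into the Section \ref{sec:four} bound will force an inequality on $\binom{k}{2}$ that is violated as soon as $k>4$.

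The main obstacle is the twisted lower bound itself in Section \ref{sec:four}: it must be sharp enough to pin down the threshold $k=4$ once the values above are inserted. Once that inequality is available, the remainder of the proof is the routine $C$--weight bookkeeping on the standard basis of $H^*(E_k;\bF_p)$; the choice of the character $\chi^{-2}$ is forced by the observation that the only generators of $H^2(E_k;\bF_p)$ not of Bockstein type are the products $x_ix_j$ of $C$--weight $\chi^2$, so tensoring with $V(\chi^{-2})$ is exactly what makes them $C$--invariant while killing any contribution in lower cohomological degrees.
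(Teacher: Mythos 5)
Your strategy coincides with the paper's: part~(ii) is exactly the paper's argument (compute $e_2(U_k)=\binom{k}{2}+2$ from the untwisted mod~$p$ cohomology and invoke the lower bound $q_4\ge e_2$ from Theorem~A), and for part~(i) your twist by $\chi^{-2}$ is precisely the paper's choice $\beta=\alpha^{-2}$ in Corollary~\ref{cor:bound}, which makes $\Lambda^2(N_k)$ the unique surviving $C$-invariant piece in degrees $\le 2$ and yields $h^2-h^1+h^0=\binom{k}{2}$.

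The one concrete gap is in how you close part~(i). The ``twisted refinement'' whose sharpness you are unsure of is not a separate bound: by Swan's formula (\cite[Proposition 6.1]{Swan:1965}, as quoted at the start of Section~\ref{sec:four}) your computation gives $\mu_2(U_k)\ge\binom{k}{2}$, and the inequality you then need is just Theorem~A's (equivalently Theorem~\ref{thm:invariant}'s) lower bound $q_4(U_k)\ge \mu'_2(U_k)-\mu'_1(U_k)$, both $\mu'_i=\mu_i$ here. What is genuinely missing from your write-up is the value $\mu_1(U_k)=k$; without it the inequality $\mu_2-\mu_1\le 2$ does not produce the threshold $k\le 4$. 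This value follows either from $\mu_1(G)=d(G)-1$ for solvable groups \cite{Cossey:1974} together with $d(U_k)=k+1$, or directly from Swan's degree-one formula applied to the twist $L(\chi^{-1})$, which gives $h^1-h^0=k$. Inserting $\mu_1(U_k)=k$ turns your computation into $k(k-3)/2\le 2$, i.e.\ $(k-4)(k+1)\le 0$, so $k\le 4$, completing the contradiction for $k>4$ exactly as in the paper.
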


This paper is organized as follows: in Section \ref{sec:two} we analyze free group actions on $(n-1)$--connected
$2n$--manifolds using cohomological methods; in Section \ref{sec:three} we discuss minimal complexes and thickenings; in Section \ref{sec:infinite} we prove Theorem A$'$;  in Section \ref{sec:four} we focus on rational homology 4--spheres; and in Section \ref{sec:five} we collect some remarks, examples and questions related to the invariants introduced here. Appendix A contains the proof of Theorem \ref{prop:fakeembed}.

\begin{acknowledgement}  The authors would like to thank Mike Newman and \"Ozg\"un \"Unl\"u for showing that some of the finite groups $G$ with $\mu_2(G) =1$, considered in Example \ref{ex:fivenine},  do have  deficiency zero. We would also like to thank the referee for many valuable comments.
\end{acknowledgement}

\section{Free actions on $(n-1)$--connected $2n$--manifolds}\label{sec:two}
In this section we will apply the cohomological approach outlined 
in \cite[\S 2]{Adem:2019}.
The proofs of \ref{prop:stable}, \ref{prop:extclass} and \ref{prop:exponent} are straightforward
modifications of the results there and details are omitted.
We assume that $Y$ is a closed, orientable, $(n-1)$--connected 
$2n$--manifold with the free orientation-preserving action of a finite group $G$; its homology has a corresponding 
$\bZ G$--module structure. Both $H_{2n}(Y,\bZ)$ and $H_0(Y,\bZ)$ are copies of the trivial module $\bZ$ whereas $H_n(Y,\bZ)$ is a free abelian group with a $\bZ G$--module structure which, by Poincar\'e duality, must be self--dual as a $\bZ G$--module, i.e. $H_n(Y,\bZ)\cong H_n(Y,\bZ)^*$. 

We assume here that $Y$ admits a finite $G$--CW complex structure, with cellular chain complex
denoted by $C_*(Y)$ (if the action is smooth this is always true, and holds up to $G$-homotopy equivalence in the topological case).

We denote by $\Omega^r(\bZ )$ the $ \bZ  G$-module uniquely defined in the stable category (where $\ZG$-modules are identified up to
stabilization  by projectives) as the $r$--fold dimension--shift of the trivial module $\bZ$.
 We refer to \cite{Adem:2004}
and \cite{Brown:1982}
for background on group cohomology. 

\begin{proposition}\label{prop:stable}
Let $Y$ be an $(n-1)$--connected $2n$-manifold with a free action of a finite group $G$ which
preserves orientation. Then 
there is a short exact sequence in the
stable category of $\bZ G$--modules of the form 
\eqncount
\begin{equation}\label{eq:extclass}
0\to \Omega^{n+1}(\bZ)\to H_n(Y;\bZ)\to \Omega^{-n-1}(\bZ)\to 0 
\end{equation}
\end{proposition}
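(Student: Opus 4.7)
The plan is to read the desired SES off the $G$-equivariant cellular chain complex $C_* = C_*(Y;\bZ)$, which by assumption is a bounded complex of finitely generated free $\bZ G$-modules. First I would pin down its homology: the $(n-1)$-connectivity of $Y$ gives $H_0(C_*) = \bZ$ and $H_i(C_*) = 0$ for $1 \leq i \leq n-1$, while Poincar\'e duality with untwisted coefficients---available because $G$ acts by orientation-preserving homeomorphisms---gives $H_{2n}(C_*) = \bZ$ and $H_i(C_*) = 0$ for $n+1 \leq i \leq 2n-1$. Thus the only nontrivial intermediate homology is $H_n(C_*) = H_n(Y;\bZ)$.

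Next I would extract two syzygy identifications. Write $Z_n = \ker \partial_n$ and $B_n = \mathrm{im}\,\partial_{n+1}$, so that $H_n(Y;\bZ) = Z_n/B_n$. Acyclicity of the lower portion of $C_*$ produces an exact sequence
$$0 \to Z_n \to C_n \to C_{n-1} \to \cdots \to C_0 \to \bZ \to 0,$$
so $Z_n \cong \Omega^{n+1}(\bZ)$ in the stable category. Symmetrically, the fundamental class $[Y] \in H_{2n}(Y;\bZ) = \bZ$ coaugments the upper portion to give
$$0 \to \bZ \to C_{2n} \to C_{2n-1} \to \cdots \to C_n \to C_n/B_n \to 0,$$
a resolution of length $n+1$, so $C_n/B_n \cong \Omega^{-(n+1)}(\bZ)$ stably.

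To merge the two halves I would form the pushout $P$ of the inclusions $B_n \hookrightarrow Z_n$ and $B_n \hookrightarrow C_n$. A direct check shows $P$ fits into two short exact sequences of $\bZ G$-modules:
$$0 \to C_n \to P \to H_n(Y;\bZ) \to 0 \quad \text{and} \quad 0 \to Z_n \to P \to C_n/B_n \to 0.$$
Since $C_n$ is projective, the first identifies $P$ with $H_n(Y;\bZ)$ in the stable category; substituting that and the syzygy identifications into the second yields the claimed SES. The main point to take care with is the equivariance in the upper half---the inclusion $\bZ \hookrightarrow C_{2n}$ given by the fundamental class is a $\bZ G$-module map precisely because $G$ preserves the orientation---beyond which the argument is homological bookkeeping essentially parallel to \cite[\S 2]{Adem:2019}.
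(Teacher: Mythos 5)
Your argument is correct and is essentially the same chain-level splicing that the paper intends (it omits the proof, referring to \cite[\S 2]{Adem:2019}): the two acyclic halves of $C_*(Y)$ identify $Z_n$ with $\Omega^{n+1}(\bZ)$ and $C_n/B_n$ with $\Omega^{-n-1}(\bZ)$, and the pushout over $B_n$ merges them; the paper's later diagram and its stable sequence $0\to \Omega^{-n}(\bZ)\to \Omega^{n+1}(\bZ)\to H_n(Y,\bZ)\to 0$ are just the rotated form of the same data. One point worth making explicit: the identification of $P$ with $H_n(Y;\bZ)$ in the stable category uses that the sequence $0\to C_n\to P\to H_n(Y;\bZ)\to 0$ splits because $H_n(Y;\bZ)$ is a $\bZ G$-lattice (so $\Ext^1_{\bZ G}(H_n(Y;\bZ),C_n)=0$), the same fact the paper invokes when it notes that an exact sequence with torsion-free quotient and projective kernel splits.
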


\begin{corollary}\label{long-exact-sequence}
The short exact sequence \eqref{eq:extclass} yields a long exact sequence in Tate cohomology
\[
\dots\to\wH^{i+n}(G,\bZ)\xrightarrow{\cup\,\sigma} \wH^{i-n-1}(G,\bZ)\to \wH^i(G,H_n(Y;\bZ))
\to \wH^{i+n+1}(G,\bZ)
\to\dots
\]
determined by the class $\sigma\in \wH^{-2n-1}(G,\bZ)$ which is
the image of the generator  $ 1\in \wH^{0}(G,\bZ)\cong\bZ/|G|$.
\end{corollary}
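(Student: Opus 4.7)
The plan is to apply the Tate cohomology functor $\wH^*(G,-)$ to the short exact sequence in Proposition \ref{prop:stable} and identify each term and the boundary operator. First, any short exact sequence in the stable category admits a representative that is an honest short exact sequence of $\bZ G$-modules after adding projective summands (which do not contribute to Tate cohomology). Applying $\wH^*(G,-)$ yields the long exact sequence
\begin{equation*}
\cdots \to \wH^i(G,\Omega^{n+1}(\bZ)) \to \wH^i(G,H_n(Y;\bZ)) \to \wH^i(G,\Omega^{-n-1}(\bZ)) \to \wH^{i+1}(G,\Omega^{n+1}(\bZ)) \to \cdots
\end{equation*}
The standard dimension-shifting isomorphism $\wH^i(G,\Omega^r(\bZ)) \cong \wH^{i-r}(G,\bZ)$ then converts the two outer entries into $\wH^{i-n-1}(G,\bZ)$ and $\wH^{i+n+1}(G,\bZ)$ respectively; a harmless reindexing produces exactly the sequence in the statement.

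Next, I would identify the boundary operator with Yoneda (equivalently, cup) product by the extension class $\sigma$ of the short exact sequence. Dimension shifting applied in both variables gives
$$\widehat{\mathrm{Ext}}^1_{\bZ G}(\Omega^{-n-1}(\bZ),\Omega^{n+1}(\bZ)) \;\cong\; \wH^{1-2(n+1)}(G,\bZ) \;=\; \wH^{-2n-1}(G,\bZ),$$
which explains the degree shift of $-2n-1$ visible in the corollary. The fact that connecting maps in an Ext-long-exact-sequence are Yoneda multiplication by the extension class is standard in Tate cohomology of finite groups, so this step is essentially formal.

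Finally, I would identify $\sigma$ with the image of the canonical generator $1 \in \wH^0(G,\bZ) \cong \bZ/|G|$ under the map determined by the construction of the short exact sequence. This is the only place where the specific geometry of $Y$ enters: the sequence of Proposition \ref{prop:stable} is built by splitting the cellular chain complex $C_*(Y)$, viewed as part of a complete resolution of $\bZ$, in its middle via the Poincar\'e duality chain equivalence, which is controlled by the fundamental class $[Y]$. Tracking $1 \in \wH^0(G,\bZ)$ through the connecting homomorphisms induced by this splitting recovers $\sigma$. This is the main technical step, and the expected main obstacle; as the authors note, however, the argument is a direct adaptation of the corresponding computation in \cite[\S 2]{Adem:2019} for the case $n=2$, so the details should transfer verbatim.
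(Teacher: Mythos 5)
Your proposal is correct and is essentially the argument the paper intends (the paper omits the proof, deferring to the analogous computation in \cite[\S 2]{Adem:2019}): apply $\wH^*(G,-)$ to a projective-stabilized honest representative of \eqref{eq:extclass}, dimension-shift the outer terms, and identify the connecting map as Yoneda/cup product with the extension class $\sigma\in\widehat{\Ext}^1(\Omega^{-n-1}(\bZ),\Omega^{n+1}(\bZ))\cong\wH^{-2n-1}(G,\bZ)$, so that $\sigma$ is the image of $1\in\wH^0(G,\bZ)$ under that map. The only remark worth making is that your final paragraph on Poincar\'e duality and the fundamental class really belongs to Proposition \ref{prop:extclass} rather than to this corollary, for which the identification of $\sigma$ is already formal once the connecting map is recognized as multiplication by the extension class.
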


We can analyze this sequence just as was done in \cite[\S 2]{Adem:2019}.
\begin{proposition}\label{prop:extclass}
The cohomology class $\sigma\in \wH^{-2n-1}(G,\bZ) \cong H_{2n}(G,\bZ)$ can be identified with the image of the fundamental class $c_*[Y/G]$ under
the homomorphism $$c_*\colon H_{2n}(Y/G, \bZ)\to H_{2n}(BG,\bZ)$$ induced by the classifying map $c\colon Y/G\to BG$. Under this identification, the class $\sigma$ determines the extension \textup{\eqref{eq:extclass}}.
\end{proposition}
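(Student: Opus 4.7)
The plan is to match the class $\sigma$ with $c_*[Y/G]$ at the chain level, adapting the argument of \cite[\S 2]{Adem:2019}. Both classes live in $H_{2n}(G,\bZ) \cong \wH^{-2n-1}(G,\bZ)$, and each admits a natural chain representative.

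For $c_*[Y/G]$: choose a $G$-equivariant lift $\tilde c : Y \to EG$ of the classifying map, which exists since $G$ acts freely. This induces a $\bZ G$-chain map $\tilde c_* : C_*(Y) \to F_*$ into a free $\bZ G$-resolution $F_*$ of $\bZ$. Passing to quotients, the image of the fundamental cycle of $Y/G$ in $F_* \otimes_{\bZ G} \bZ$ represents $c_*[Y/G]$ in $H_{2n}(BG,\bZ) = H_{2n}(G,\bZ)$.

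For $\sigma$: since $Y$ has a finite free $G$-CW structure, $C_* = C_*(Y)$ is a bounded complex of finitely generated free $\bZ G$-modules with homology concentrated in degrees $0$, $n$, $2n$. Truncating $C_*$ below degree $n$ gives a partial projective resolution of $\bZ$, so that the module of $n$-cycles represents $\Omega^{n+1}(\bZ)$ in the stable category; Poincar\'e duality applied to the upper half of $C_*$ identifies the $n$-boundaries with $\Omega^{-n-1}(\bZ)$. These identifications realize the extension \eqref{eq:extclass}, and by splicing $C_*$ with standard free resolutions above and below, one obtains a complete free $\bZ G$-resolution of $\bZ$ into which $C_*$ embeds as a finite slice. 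Tracing the Tate long exact sequence of Corollary \ref{long-exact-sequence} through this construction, the class $\sigma$---the image of $1 \in \wH^0(G,\bZ)$ under the connecting map---is represented by the fundamental cycle $\zeta \in C_{2n}(Y)$ viewed inside this complete resolution.

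With both descriptions in hand, the two representatives agree: each is the image of $\zeta$ under a chain map between complete resolutions of $\bZ$, and any two such chain maps are chain-homotopic, so they induce the same class in $H_{2n}(G,\bZ)$. This yields $\sigma = c_*[Y/G]$. The main obstacle is the chain-level splicing argument for $\sigma$: one must check that the canonical chain map from the spliced complete resolution built out of $C_*$ to $F_*$ sends $\zeta$ to the expected image. This follows from the universal property of $EG$ together with the fact that $\tilde c_*$ is a chain equivalence in low degrees (by $(n-1)$-connectedness of $Y$), and Poincar\'e duality propagates the agreement to the dual half. The detailed bookkeeping is an essentially verbatim translation of the argument in \cite[\S 2]{Adem:2019} to the present dimension, which is why the authors omit the details.
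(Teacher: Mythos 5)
The paper itself offers no written proof of this proposition---it defers to \cite[\S 2]{Adem:2019} and to \cite[Corollary 2.4]{hk2}---so the question is whether your sketch stands on its own. Your description of $c_*[Y/G]$ via an equivariant lift $\tilde c_*\colon C_*(Y)\to F_*$ is fine, and comparing chain-level representatives is the right overall strategy. The gap is in the step that produces a representative of $\sigma$: you assert that ``by splicing $C_*$ with standard free resolutions above and below, one obtains a complete free $\ZG$-resolution of $\bZ$ into which $C_*$ embeds as a finite slice.'' This cannot work. A complete resolution is an \emph{acyclic} complex of projectives, whereas $C_*(Y)$ has $H_n(C_*)=H_n(Y;\bZ)\neq 0$ in general (describing this middle homology is the entire point of \eqref{eq:extclass}); splicing only alters the complex near degrees $0$ and $2n$ and cannot kill homology in degree $n$. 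Consequently the concluding appeal to ``any two chain maps between complete resolutions are chain-homotopic'' has nothing to apply to: neither $C_*(Y)$ nor your spliced complex is a complete resolution, and $\tilde c_*$ is not a map between complete resolutions. There is also an indexing slip: the coresolution $0\to\bZ\to C_{2n}\to\dots\to C_{n+1}\to B_n\to 0$ exhibits the $n$-boundaries $B_n$ as a representative of $\Omega^{-n}(\bZ)$, not of $\Omega^{-n-1}(\bZ)$; the quotient $\Omega^{-n-1}(\bZ)$ in \eqref{eq:extclass} arises only after rotating the genuine exact sequence $0\to B_n\to Z_n\to H_n(Y;\bZ)\to 0$ in the stable category.

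What the argument actually requires is to keep the two halves of $C_*(Y)$ separate. The bottom truncation exhibits $Z_n$ as $\Omega^{n+1}(\bZ)$, the top truncation exhibits $B_n$ as $\Omega^{-n}(\bZ)$, and by dimension shifting the extension class $\sigma\in\Ext^1_{\ZG}(\Omega^{-n-1}(\bZ),\Omega^{n+1}(\bZ))\cong\wH^{-2n-1}(G,\bZ)$ is the class of the stable map given by the inclusion $B_n\hookrightarrow Z_n$. One must then show that this stable map corresponds to $c_*[Y/G]$ under the isomorphism $\wH^{-2n-1}(G,\bZ)\cong H_{2n}(G,\bZ)$; the standard way is to compare the algebraic classifying map $\tilde c_*\colon C_*(Y)\to F_*$ with its Poincar\'e dual $F^{2n-*}\to C^{2n-*}\xrightarrow{\cap[Y]} C_*(Y)$, the fundamental class entering through the induced map in degree $2n$. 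This comparison is exactly the computation carried out for $n=2$ in \cite[Corollary 2.4]{hk2}, and it is the part your sketch does not supply.
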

\begin{remark} This property of the extension class was proved for $n=2$ in \cite[Corollary 2.4]{hk2}, and the proof in the general case is similar.
\end{remark}
Similarly the map $\Omega^{n+1}(\bZ)\to H_n(Y,\bZ)$ defines an extension class
$$\epsilon_Y\in H^{n+1}(G, H_n(Y,\bZ))$$
which appears in the long exact sequence above as the image of the generator under the map
$\wH^0(G,\bZ)\to \wH^{n+1}(G,\bZ)$. Algebraically this corresponds to mapping the canonical
defining extension for $\Omega^{n+1}(\bZ)$ (identified with the extension class for the module
of cycles in $C_n(Y)$) to the extension obtained by reducing by the module of boundaries $B_n$:

$$\xymatrix@R-3pt@C-2pt{& B_n \ar@{=}[r]\ar[d] &B_n \ar[d] &&&&\cr
0 \ar[r] & Z_n\ar[d]  \ar[r] & C_n\ar[d]\ar[r] & 
C_{n-1} \ar@{=}[d]\ar[r] & \dots \ar[r] & C_0 \ar@{=}[d]\ar[r]&\bZ\ar@{=}[d]\ar[r]&0 \cr
0 \ar[r] & H_n(Y,\bZ) \ar[r] & C_n/B_n\ar[r] & C_{n-1} \ar[r] & \dots \ar[r] & C_0\ar[r]&\bZ\ar[r]&0 }$$
These two extension classes are related as follows:
\begin{proposition}\label{prop:exponent}
Let $G$ denote a finite group acting freely on an $(n-1)$--connected orientable $2n$-manifold $Y$ preserving orientation, then $\epsilon_Y\ne 0$ and $|G| = \rm{exp} (\sigma)\cdot \rm{exp} (\epsilon_Y)$. The class $\epsilon_Y$ has exponent $|G|$ if and only if $\sigma =0$, in which case we have a stable
equivalence 
$$H_n(Y,\bZ) \cong \Omega^{n+1}(\bZ)\oplus \Omega^{-n-1}(\bZ).$$
\end{proposition}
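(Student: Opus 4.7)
The plan is to derive all four assertions from the specialization of the long exact sequence of Corollary \ref{long-exact-sequence} at $i = n+1$, namely
$$\wH^{2n+1}(G,\bZ) \xrightarrow{\cup\sigma} \wH^{0}(G,\bZ) \xrightarrow{\phi} \wH^{n+1}(G, H_n(Y;\bZ)) \to \wH^{2n+2}(G,\bZ).$$
By the construction in the diagram immediately preceding the statement, $\epsilon_Y = \phi(1)$ where $1$ is the canonical generator of $\wH^0(G,\bZ) \cong \bZ/|G|$. Exactness therefore gives
$$\exp(\epsilon_Y) = \bigl[\wH^0(G,\bZ) : \operatorname{im}(\cup\sigma)\bigr].$$

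The crucial step is to identify $\operatorname{im}(\cup\sigma)$ with the unique cyclic subgroup of $\bZ/|G|$ of order $\exp(\sigma)$. For this one invokes the classical Tate--Nakayama duality for the finite group $G$: combining $\wH^*(G, \bQ)=0$ with the short exact sequence $0 \to \bZ \to \bQ \to \bQ/\bZ \to 0$ yields $\wH^i(G,\bZ) \cong \Hom(\wH^{-i}(G,\bZ), \bQ/\bZ)$, which makes the cup product pairing
$$\wH^{2n+1}(G,\bZ) \otimes \wH^{-2n-1}(G,\bZ) \longrightarrow \wH^0(G,\bZ) \cong \bZ/|G|$$
a perfect pairing of finite abelian groups. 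An element $\sigma$ of order $e$ therefore produces a homomorphism whose image is precisely the order-$e$ subgroup of $\bZ/|G|$. This gives $\exp(\epsilon_Y) = |G|/\exp(\sigma)$, the multiplicative formula $|G| = \exp(\sigma)\cdot\exp(\epsilon_Y)$, and the equivalence $\exp(\epsilon_Y) = |G|$ if and only if $\sigma = 0$. When $\sigma = 0$, the extension class of \eqref{eq:extclass} vanishes, so the sequence splits in the stable category, giving the stable equivalence $H_n(Y,\bZ) \cong \Omega^{n+1}(\bZ) \oplus \Omega^{-n-1}(\bZ)$.

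To rule out $\epsilon_Y = 0$ (equivalently $\exp(\sigma) = |G|$) when $G$ is nontrivial: a vanishing $\epsilon_Y$ would split the bottom row of the diagram and hence yield a $\bZ G$-chain map $C_*(Y) \to H_n(Y,\bZ)[n]$ inducing an isomorphism on $H_n$. Using Poincar\'e duality on $C_*(Y)$ together with the self-duality $H_n(Y,\bZ)^* \cong H_n(Y,\bZ)$, the algebraic mapping fibre becomes a bounded $2n$-dimensional Poincar\'e $\bZ G$-complex with free orientation-preserving $G$-action and the homology of $S^{2n}$. By Swan's theory this would witness $(2n+1)$-periodic cohomology for $G$; since nontrivial finite groups have even minimal cohomological period, $G$ must be trivial, a contradiction.

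The principal obstacle is the Tate--Nakayama identification of $\operatorname{im}(\cup\sigma)$; once granted, the multiplicative formula and the stable splitting follow by exactness, while the non-vanishing of $\epsilon_Y$ reduces to the rigidity of finite groups with periodic cohomology.
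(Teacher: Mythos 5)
Your derivation of the formula $|G| = \exp(\sigma)\cdot\exp(\epsilon_Y)$ is correct and is essentially the intended argument: the paper defers the proof to \cite[\S 2]{Adem:2019}, where the same specialization of the long exact sequence at $i=n+1$ is combined with the perfectness of the Tate cup-product pairing $\wH^{2n+1}(G,\bZ)\otimes\wH^{-2n-1}(G,\bZ)\to\wH^0(G,\bZ)$. Your identification of $\operatorname{im}(\cup\,\sigma)$ with the subgroup of order $\exp(\sigma)$, the equivalence $\exp(\epsilon_Y)=|G|\Leftrightarrow\sigma=0$, and the stable splitting of \eqref{eq:extclass} when its extension class $\sigma$ vanishes are all fine.

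The gap is in your proof that $\epsilon_Y\ne 0$. As written, the periodicity argument does not go through: the algebraic mapping fibre of a chain map $C_*(Y)\to H_n(Y,\bZ)[n]$ is not a complex of projective modules --- its $(n-1)$-st term contains the (generally non-projective) module $H_n(Y,\bZ)$ as a direct summand --- so Swan's periodicity criterion, which requires an exact sequence of \emph{projectives} from $\bZ$ to $\bZ$, cannot be applied to it, and you give no justification for the claimed $2n$-dimensional Poincar\'e duality structure on this fibre. Without those inputs the conclusion that $G$ would have period dividing $2n+1$ is unsupported (and the final step, that no nontrivial finite group has odd period, also needs the small check excluding $\bZ/2$). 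Fortunately the detour is unnecessary: apply the formula you have already proved to a subgroup $C\le G$ of prime order $p$. Everything restricts compatibly ($C_*(Y)$ is still a complex of free $\bZ C$-modules and $\res^G_C\epsilon_Y=\epsilon_{Y|C}$), and since $\wH^{-2n-1}(C,\bZ)\cong\wH^{1}(C,\bZ)=0$ by $2$-periodicity, we get $\sigma_C=0$ and hence $\exp(\epsilon_{Y|C})=p>1$; therefore $\epsilon_Y\ne 0$. Replace your final paragraph with this restriction argument.
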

\begin{example}
Observing that the cohomology of a group with periodic cohomology is always zero in odd dimensions,
we see that if $G$ has periodic cohomology, then there is a stable equivalence
$H_n(Y,\bZ)\cong \Omega^{n+1}(\bZ)\oplus \Omega^{-n-1}(\bZ)$.
\end{example}

We note the standard identity $\chi (Y) = 2 + (-1)^n\dim H_n(Y,\bQ)$, and the formula $|G|\chi (Y/G) = \chi (Y)$ from the covering $Y \to Y/G$.   Since the transfer map induces an isomorphism $H_i(Y/G;\bQ) \cong H_i(Y;\bQ)^G$, we have
$\chi (Y/G) = 2 + (-1)^n\dim  
H_n(Y, \bQ)^G$.
In particular 
$$\dim H_n(Y,\bQ)^G =(-1)^n\left ( \chi (Y/G) - 2\right).$$
From the stable sequence
$$0\to \Omega^{-n}(\bZ)\to \Omega^{n+1}(\bZ)\to H_n(Y, \bZ)\to 0$$
we infer the existence of projective modules $Q_r$ and $Q_s$
which fit into an exact sequence
\eqncount
\begin{equation}\label{H2}
0\to \Omega^{-n}(\bZ)\oplus Q_s \to \Omega^{n+1}(\bZ)\oplus Q_r
\to H_n(Y;\bZ)\to 0
\end{equation}
where $Q_i \otimes \bQ \cong [\bQ G]^i$ for $i=r,s$. Here we write $\Omega ^{j+1}(\bZ)$ ($j\geq 0$) for the j-th kernel in a minimal projective resolution of $\bZ$, meaning a resolution: 
\eqncount
\begin{equation}\label{H3}
0\to \Omega^{j+1}(\bZ)\to P_j \to  P_{j-1}\to\dots\to P_0\to\bZ\to 0
\end{equation}
realizing $\mu'_j(G)$ (see \cite[p.~193]{Swan:1965}), from which we see that
$$\rk _{\bZ} \Omega^j(\bZ) + (-1)^{j-1} = |G|(\mu'_{j-1}(G))  ~~~\rm{and}~~~ 
\rk _{\bZ} \Omega^j(\bZ)^G +(-1)^{j-1} = \mu'_{j-1}(G) $$
where $(-1)^k|G|\mu'_k (G)$  is precisely the minimal value over all partial Euler characteristics of
a \emph{projective} resolution of $\bZ$ over $\ZG$ (see  \cite[Remark, p.~195]{Swan:1965}). The corresponding invariants $\mu_k(G)$ for minimal \emph{free} resolutions of $\bZ$ were
defined by Swan (see \cite[p.~193]{Swan:1965}).

By dualizing we see
that a minimal representative for $\Omega^{-j}(\bZ)$ is given by
$\Omega^j(\bZ)^*$, the dual module.  Thus for our purposes we have
$$\rk _{\bZ} \Omega^{n+1}(\bZ)^G = \mu'_{n}(G)+ (-1)^{n+1}, ~~\rk _{\bZ} \Omega^{-n}(\bZ)^G = 
\mu'_{n-1}(G)+(-1)^n.$$

Applying invariants after tensoring over $\bQ$ to the exact sequence \eqref{H2} yields the formula
$$\mu'_{n}(G)+ (-1)^{n+1} + r = \mu'_{n-1}(G)+(-1)^n + s + (-1)^n[ \chi (Y/G) - 2].$$
whence we obtain
$$s -r  = \mu'_n(G) - \mu'_{n-1}(G) + (-1)^{n+1} \chi (Y/G)$$

\begin{theorem}\label{thm:invariant}
If $Y$ is a closed, $(n-1)$--connected $2n$--manifold with a free, orientation-preserving action of $G$, a finite group, then
for any subgroup $H\subset G$
$$\mu'_n(H) - \mu'_{n-1}(H) \le (-1)^n [G:H] \chi (Y/G).$$
\end{theorem}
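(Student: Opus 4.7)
The plan is in three stages. First, observe that restriction of the free orientation-preserving $G$-action to any subgroup $H \le G$ is again free and orientation-preserving, and that $Y/H$ is a $[G:H]$-fold covering of $Y/G$, so $\chi(Y/H) = [G:H]\chi(Y/G)$. Hence it suffices to prove the case $H = G$:
$$\mu'_n(G) - \mu'_{n-1}(G) \le (-1)^n\chi(Y/G).$$
The general statement then follows by applying this case to the restricted $H$-action on $Y$.

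Second, I invoke the identity just derived from the exact sequence \eqref{H2} by tensoring with $\bQ$ and taking $G$-invariants,
$$s - r \;=\; \mu'_n(G) - \mu'_{n-1}(G) + (-1)^{n+1}\chi(Y/G),$$
or equivalently $\mu'_n(G) - \mu'_{n-1}(G) = (s-r) + (-1)^n\chi(Y/G)$. Thus the theorem is equivalent to showing $s \le r$, where $s = \rk_{\bZ G}Q_s$ and $r = \rk_{\bZ G}Q_r$ are the ranks of the projective modules appearing in \eqref{H2}.

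The main obstacle, and the crux of the proof, is the inequality $s \le r$. My approach is to produce a concrete realization of \eqref{H2} in which $Q_s = 0$. Starting from the stable short exact sequence $0 \to \Omega^{-n}(\bZ) \to \Omega^{n+1}(\bZ) \to H_n(Y;\bZ) \to 0$, I would first lift it to an honest short exact sequence
$$0 \to \Omega^{-n}(\bZ) \to E \to H_n(Y;\bZ) \to 0$$
whose kernel is literally $\Omega^{-n}(\bZ)$, using the fact that the stable extension class in $\widehat{\Ext}^1_{\bZ G}(H_n(Y;\bZ), \Omega^{-n}(\bZ))$ is the image of a class in ordinary $\Ext^1_{\bZ G}(H_n(Y;\bZ), \Omega^{-n}(\bZ))$ represented by such a genuine extension. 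The middle term $E$ is stably equivalent to $\Omega^{n+1}(\bZ)$, so after adjoining a suitable free summand one obtains an honest isomorphism $E \cong \Omega^{n+1}(\bZ) \oplus Q_r$ for a projective $Q_r$, rewriting the sequence in the form \eqref{H2} with $s = 0$ and hence $s \le r$. The most delicate point is verifying that $\Omega^{n+1}(\bZ)$ arises as a direct summand of the constructed $E$ (not merely as a stable class representative), which relies on the minimality of $\Omega^{n+1}(\bZ)$ as the kernel in a minimal projective resolution of $\bZ$ over $\bZ G$; concretely, this is where one invokes Schanuel's lemma comparing a cellular partial projective resolution of $\bZ$ derived from $C_*(Y)$ to the minimal one realizing $\mu'_n(G)$.
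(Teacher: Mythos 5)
Your reduction to the case $H=G$ via covering spaces, and your reformulation of the theorem as the inequality $s\le r$ via the identity $s-r=\mu'_n(G)-\mu'_{n-1}(G)+(-1)^{n+1}\chi(Y/G)$, are both correct and match the paper. The gap is in your third stage. To get a realization of \eqref{H2} with $Q_s=0$ you need the middle term $E$ of your honest extension $0\to\Omega^{-n}(\bZ)\to E\to H_n(Y;\bZ)\to 0$ to decompose as $\Omega^{n+1}(\bZ)\oplus Q_r$ with $Q_r$ projective. Stable equivalence only gives $E\oplus P\cong\Omega^{n+1}(\bZ)\oplus P'$ for some projectives $P,P'$; there is no cancellation theorem for $\ZG$-lattices that lets you delete $P$, and your fix of ``adjoining a suitable free summand'' is self-defeating: enlarging $E$ by a free module forces you to enlarge the kernel of the extension by that same free module, which reintroduces exactly the $Q_s\ne 0$ you are trying to eliminate (and indeed must, since $s-r$ is pinned down by the identity above independently of the realization). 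The appeal to Schanuel's lemma does not close this either: Schanuel compares the geometric kernel $Z_n=\ker(C_n(Y)\to C_{n-1}(Y))$, whose sub-lattice of boundaries is only \emph{stably}, not literally, the minimal $\Omega^{-n}(\bZ)$, with the minimal resolution, and it produces precisely the projective error terms $Q_r$ and $Q_s$ rather than a realization with $Q_s=0$. Even the weaker claim that would suffice for the rank count, namely $\rk_{\bZ}E^G\ge\rk_{\bZ}\Omega^{n+1}(\bZ)^G$, requires showing that your abstractly chosen $E$ occurs as the $(n+1)$-st kernel of some partial projective resolution of $\bZ$; you assert this but do not prove it, and it is not automatic for an arbitrary representative of the $\Ext^1$ class.

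For comparison, the paper sidesteps the construction of a good realization and argues by contradiction: assuming $s>r$, it splits $Q_s$ off the middle term of \eqref{H2} (the sequence $0\to Q_s\to\Omega^{n+1}(\bZ)\oplus Q_r\to L\to 0$ splits because $L$ is torsion-free, projectives over $\ZG$ being relatively injective), obtaining $L\oplus Q_s\cong\Omega^{n+1}(\bZ)\oplus Q_r$ with $\rk_{\bZ}L^G<\rk_{\bZ}\Omega^{n+1}(\bZ)^G$; since $L$ \emph{is} realizable as a kernel of a projective resolution, this contradicts the minimality defining $\mu'_n(G)$. If you want to keep your framework, the workable route is that contradiction argument (or an honest proof of the realizability-as-a-kernel claim), not the direct construction of a split realization with $Q_s=0$.
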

\begin{proof}
We will prove this for $H=G$ by contradiction.
Assume that $s-r >0$ and form the diagram
$$\xymatrix@R-3pt@C-3pt{& Q_s \ar@{=}[r]\ar[d] &Q_s \ar[d] &&\cr
0 \ar[r] & \Omega^{-n}(\bZ)\oplus Q_s\ar[d]  \ar[r] & \Omega^{n+1}(\bZ) \oplus Q_r \ar[d]\ar[r] & 
H_n(Y,\bZ) \ar@{=}[d]\ar[r] & 0 \cr
0 \ar[r] & \Omega^{-n}(\bZ) \ar[r] & L \ar[r] & H_n(Y, \bZ) \ar[r] & 0 
}$$
where $L$ is the quotient of $\Omega^{n+1}(\bZ) \oplus Q_r$ in the middle vertical exact sequence. Note that this middle vertical exact seqence splits (since $L$ is torsion-free). 
Hence $$\Omega^{n+1}(\bZ) \oplus Q_r \cong L \oplus Q_s.$$
By Swan \cite[Lemma 2.1]{Swan:1960b}, there is a projective resolution
$$ 0 \to L \to P_{n}\oplus Q_r \to P_{n-1} \oplus Q_s  \to P_{n-2}  \to \dots \to P_0\to\bZ\to 0$$
Since $s>r$, this contradicts the minimality of the resolution \eqref{H3} realizing $\mu'_n(G)$.
Hence 
we have shown that $s-r\le 0$. The full result follows using covering spaces.
\end{proof}

\begin{remark}\label{rem:excepttwo} As mentioned in the Introduction, Swan proved that $\mu'_k(G) = \mu_k(G)$  unless $G$ has periodic cohomology of period  dividing $k+1$,  and $G$ admits no periodic free resolution of period $k+1$. In these \emph{exceptional} cases, $\mu_k (G) = 1$ and $\mu'_k(G) = 0$. In contrast, 
$\mu_k (G) = 0$ if $G$ has a periodic free resolution of period $k + 1$ and $G\ne 1$. 
We  also note that if the pair $(G,k)$ is exceptional, then $k\geq 3$ is odd and $G$ is non-cyclic. In particular, $\mu'_k(G) = \mu_k(G)$ if $G$ is a finite $p$-group (see  \cite[Corollary 5.2]{Swan:1965}).
\end{remark}

If the pair $(G,n)$ is not exceptional, the numbers $\mu_n(G)$ can be computed using group cohomology. By a result of Swan  \cite[Proposition 6.1]{Swan:1965},  
the invariant $\mu_n(G)$ is the least integer greater than or equal to all the numbers
$$(\dim  M)^{-1}\left ( \dim H^n(G,M) - \dim H^{n-1}(G,M) + \dots
+ (-1)^n \dim H^0(G,M)\right )$$
as $M$ ranges over all simple $\bF_pG$--modules for all
primes $p$ dividing $|G|$. As extending the field doesn't change dimensions we can take $\mathbb K_p$ an algebraically closed field
of characteristic $p$ and restrict attention to absolutely irreducible $\mathbb K_pG$--modules. Next we introduce

\begin{definition}\label{def:en}
For any discrete group $G$ of type $F_n$, let $e_n(G)$ denote the least integer greater than or equal to all the numbers 
$$\dim H^n(G,\bF) - 2 \left ( \dim H^{n-1}(G,\bF) - \dim H^{n-2}(G,\bF) + \dots + (-1)^{n-1}\dim H^0(G,\bF)\right )$$
where the coefficients range over $\bF = \bQ$ or   $\bF = \bF_p$ for all primes $p$.
\end{definition}

\begin{remark}When $G=P$ is a finite $p$--group, the trivial module
$\bF_p$ is the only simple module, and we can verify that
$\mu_n (P)-\mu_{n-1}(P) = e_n(P)$. 
\end{remark}

We have the following elementary inequality: 
\begin{lemma}\label{lem:twoone}
Suppose that $X$ is a closed orientable $2n$--manifold with fundamental group $G$ of type $F_n$ whose universal
cover is $(n-1)$--connected. Then for any subgroup $H\subset G$ of finite index
$$e_n(H) \le [G:H] (-1)^n\chi (X).$$
\end{lemma}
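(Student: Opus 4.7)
The plan is to reduce to the case $H=G$ via covering spaces, then match $\chi(X)$ against the cohomological expression defining $e_n(G)$, using Poincar\'e duality and the fact that the classifying map $c\colon X\to BG$ is $n$-connected. For the reduction: if $H\le G$ has finite index, the corresponding covering $X_H\to X$ is a closed orientable $2n$-manifold with fundamental group $H$ and $(n-1)$-connected universal cover, and $\chi(X_H)=[G:H]\chi(X)$. Thus it suffices to prove the case $H=G$, i.e., $e_n(G)\le (-1)^n\chi(X)$.

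Fix a field $\bF\in\{\bQ\}\cup\{\bF_p:p\text{ prime}\}$. Poincar\'e duality combined with the universal coefficient theorem gives $\dim H_i(X,\bF)=\dim H_{2n-i}(X,\bF)$, so that
\[
\chi(X)=2+2\sum_{i=1}^{n-1}(-1)^i\dim H_i(X,\bF)+(-1)^n\dim H_n(X,\bF).
\]
The homotopy fibre of $c$ is the $(n-1)$-connected space $\wX$, so $c$ is $n$-connected; consequently $H_i(X,\bF)\cong H_i(G,\bF)$ for $0\le i\le n-1$, and the map $c^*\colon H^n(G,\bF)\to H^n(X,\bF)$ is injective. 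By Poincar\'e duality the latter yields $\dim H^n(G,\bF)\le \dim H_n(X,\bF)$.

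Substituting $\dim H_i(X,\bF)=\dim H^i(G,\bF)$ for $1\le i\le n-1$ into the formula for $\chi(X)$ and rewriting the defining expression for $e_n(G)$ as
\[
e_n(G)=\dim H^n(G,\bF)+2(-1)^n+2\sum_{i=1}^{n-1}(-1)^{n+i}\dim H^i(G,\bF),
\]
one reads off the identity
\[
(-1)^n\chi(X)-e_n(G)=\dim H_n(X,\bF)-\dim H^n(G,\bF)\ge 0.
\]
Since this holds for every admissible $\bF$, taking the maximum over all such $\bF$ gives $e_n(G)\le (-1)^n\chi(X)$, as required. The computation is essentially bookkeeping of signs; the only genuinely topological input is the injectivity of $c^*$ in degree $n$, a standard consequence of the $n$-connectedness of $c$ (and visible from the Serre spectral sequence of $\wX\to X\to BG$), so no real obstacle arises.
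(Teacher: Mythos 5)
Your proof is correct and follows essentially the same route as the paper's: the $n$-connectedness of the classifying map gives $\dim H^i(X,\bF)=\dim H^i(G,\bF)$ for $i\le n-1$ and $\dim H^n(G,\bF)\le\dim H^n(X,\bF)$, Poincar\'e duality folds the top half of the Betti numbers onto the bottom half, and the subgroup case follows by passing to the finite cover. The only cosmetic issue is writing $e_n(G)$ for the $\bF$-dependent quantity before taking the supremum, which you correct at the end.
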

\begin{proof}
Let $\bF$ denote any field of coefficients. The connectivity of the universal cover implies that $$H^i(G,\bF)\cong H^i(X,\bF)~~\text{for}~~ 0\le i\le n-1$$
and $$\dim H^n(G,\bF)\le \dim H^n(X,\bF).$$
By Poincar\'e duality we have 
$$H^k(X,\bF) \cong H^{2n-k}(G,\bF)~~\text{for}~~ n+1\le k \le 2n.$$  Combining these facts and using covering
space theory we
obtain the inequality.
\end{proof}
Applying the mod $p$ coefficient sequence yields an attractive corollary
\begin{corollary}
If $X$ is a closed orientable $2n$ manifold with finite fundamental group $G$ and 
$(n-1)$--connected universal cover, then for all primes $p$ dividing $|G|$, and subgroups $H\subset G$,
$$\dim H^{n+1}(H,\bZ)\otimes\bF_p - \dim H^n(H,\bZ)\otimes\bF_p \le (-1)^n([G:H]\chi (X)-2).$$
\end{corollary}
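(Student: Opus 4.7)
The strategy is to specialize Lemma \ref{lem:twoone} to $\bF = \bF_p$ and then translate the resulting mod $p$ inequality into one about integral cohomology using the Bockstein long exact sequence attached to the coefficient sequence $0 \to \bZ \xrightarrow{p} \bZ \to \bF_p \to 0$. That long exact sequence breaks, at each degree $i$, into a short exact sequence
\[
0 \to H^i(H;\bZ)\otimes\bF_p \to H^i(H;\bF_p) \to H^{i+1}(H;\bZ)[p] \to 0,
\]
where $A[p]$ denotes the $p$-torsion subgroup. Since $H$ is finite, $H^i(H;\bZ)$ is a finite abelian group for every $i \ge 1$, and for any such $A$ one has the elementary identity $\dim_{\bF_p} A[p] = \dim_{\bF_p} A\otimes\bF_p$. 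Writing $a_i := \dim_{\bF_p} H^i(H;\bZ)\otimes\bF_p$ and $b_i := \dim_{\bF_p} H^i(H;\bF_p)$, this yields the key identity $b_i = a_i + a_{i+1}$ for all $i \ge 0$, valid in degree zero as well once one notes $a_0 = 1$ and $a_1 = 0$.

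Next I would substitute this identity into the alternating sum in Definition \ref{def:en} applied to $\bF = \bF_p$. The sum
\[
T \;:=\; \sum_{i=0}^{n-1}(-1)^{n-1-i}b_i \;=\; \sum_{i=0}^{n-1}(-1)^{n-1-i}(a_i + a_{i+1})
\]
telescopes: the contributions $a_1, \dots, a_{n-1}$ each appear twice with opposite signs and cancel, leaving only $T = (-1)^{n-1}a_0 + a_n = a_n - (-1)^n$. Combined with $b_n = a_n + a_{n+1}$, this gives
\[
b_n - 2T \;=\; a_{n+1} - a_n + 2(-1)^n,
\]
and by the very definition of $e_n(H)$ this quantity is bounded above by $e_n(H)$.

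Invoking Lemma \ref{lem:twoone} produces
\[
a_{n+1} - a_n + 2(-1)^n \;\le\; e_n(H) \;\le\; [G:H](-1)^n\chi(X),
\]
and transposing the $2(-1)^n$ on the left yields exactly the stated inequality. There is no serious obstacle in the argument; it is essentially a bookkeeping exercise around the Bockstein. The only points requiring care are the telescoping computation and the handling of the boundary degree $i=0$, where $H^0(H;\bZ) = \bZ$ is torsion-free so the universal-coefficient identity relating $\otimes\bF_p$ and $[p]$ must be used in its correct form.
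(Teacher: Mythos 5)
Your argument is correct and is exactly the route the paper intends: the paper's entire proof is the phrase ``applying the mod $p$ coefficient sequence'' to Lemma \ref{lem:twoone}, and your Bockstein identity $b_i = a_i + a_{i+1}$ together with the telescoping computation is the standard way to carry that out. The boundary cases ($H^0(H;\bZ)=\bZ$ torsion-free, $H^1(H;\bZ)=0$ for $H$ finite) are handled properly.
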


\begin{proof} Since $[G:H]\chi (X)$ equals the Euler characteristic of the  $[G:H]$-fold covering of $X$, it is enough to do the case $H=G$. Let $h^i(G) = \dim  H^i(G;\bF_p)$.  From the relations noted above, and Lemma \ref{lem:twoone},  we have the formula
$$(-1)^n(\chi(X) -2) 
 \geq
 h^n(G) - 2\sum_{i =1}^{n-1} (-1)^{i+1} h^{n-i}(G).$$
But by the mod $p$ coefficient sequence, we have 
$$h^i(G) = \dim H^{i+1}(G,\bZ)\otimes\bF_p + \dim H^i(G,\bZ)\otimes\bF_p, \text{\ for\ } 1 \leq i \leq n.$$
The result follows by combining these two relations. 
\end{proof}
\noindent Applying this to any subgroup $C\subset G$ of prime order, we obtain
\begin{corollary}\label{cor:euler}
If $X$ is a closed orientable $2n$--manifold with $(n-1)$--connected universal cover and
non--trivial finite fundamental group $G$, then $\chi (X) >0$ if and only if $n$ is even.
\end{corollary}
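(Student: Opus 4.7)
The plan is to apply the preceding corollary to $H=C$, a cyclic subgroup of $G$ of prime order $p$, which exists because $G$ is a nontrivial finite group. For $C\cong\bZ/p$ the group $H^k(C,\bZ)\otimes\bF_p$ vanishes in odd positive degrees and has $\bF_p$--dimension $1$ in degree $0$ and in even positive degrees. Plugging these values into the displayed inequality splits neatly according to the parity of $n$.

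For $n$ even, the inequality reduces to $-1\le [G:C]\chi(X)-2$, i.e.\ $[G:C]\chi(X)\ge 1$, which forces the integer $\chi(X)$ to satisfy $\chi(X)\ge 1$. For $n$ odd, the same computation reduces the inequality to $[G:C]\chi(X)\le 1$; whenever $[G:C]\ge 2$, integrality immediately gives $\chi(X)\le 0$. This handles every case except when $|G|=p$ is prime and $C=G$, in which the bound $\chi(X)\le 1$ still leaves open the possibility $\chi(X)=1$.

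The main obstacle is this corner case, which I plan to exclude via a Lefschetz fixed point argument. If $\chi(X)=1$, then $\chi(\wX)=|G|\chi(X)=p$, while $\chi(\wX)=2+(-1)^n\dim H_n(\wX,\bQ)\le 2$ since $n$ is odd. This forces $p=2$ and $\dim H_n(\wX,\bQ)=0$, so $\wX$ is a rational homology $2n$--sphere carrying a free orientation--preserving involution $\tau$ (the generator of $G$). The Lefschetz number of $\tau$ is then $1+1=2$, with the two nonzero contributions coming from $H_0(\wX,\bQ)$ and $H_{2n}(\wX,\bQ)$, on each of which $\tau$ acts as the identity. This contradicts the Lefschetz fixed point theorem, since $\tau$ is fixed-point-free. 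Hence $\chi(X)\le 0$ in all cases with $n$ odd, completing the proof.
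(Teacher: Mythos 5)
Your proof is correct, and its main computation --- specializing the preceding corollary to a prime-order subgroup $C\subset G$ and reading off the two parities of $n$ --- is exactly the paper's approach; the paper's entire proof consists of the single instruction to apply that corollary to a subgroup of prime order. Where you go beyond the paper is in observing that for $n$ odd this only yields $[G:C]\chi(X)\le 1$, which leaves unresolved the corner case where $G$ itself has prime order and $\chi(X)=1$; the paper's one-line proof silently elides this. Your Lefschetz fixed-point argument closes the gap correctly: the deck transformation preserves orientation because $X$ is orientable, and the Lefschetz theorem applies to closed topological manifolds as compact ANRs. For the record, there is an alternative that avoids Lefschetz and makes no case distinction on $p$: since $\wX$ is $(n-1)$-connected and $G$ is finite, $H_i(X;\bQ)\cong H_i(G;\bQ)=0$ for $0<i<n$ and, by Poincar\'e duality, for $n<i<2n$, so $\chi(X)=2+(-1)^n b_n(X)=2-b_n(X)$ when $n$ is odd; moreover $b_n(X)$ is even because the cup-product pairing on $H^n(X;\bQ)$ is nondegenerate and skew-symmetric. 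Hence $\chi(X)$ is even, and the bound $\chi(X)\le 1$ already forces $\chi(X)\le 0$.
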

\begin{proof} Let $C\subset G$ be a cyclic subgroup of order $p$, a prime. Then 
$H^{2k}(C;\bZ) = \bZ/p\bZ$ (if $k >0$),  and  $H^{2k+1}(C;\bZ) = 0$. For $n$ even, applying the inequality above with $H = C$ yields $\chi (X)>0$. When $n$ is odd, note that $b_n(X) \neq 0$ implies $b_n(X) \geq 2$, since the intersection form of $X$ is non-singular and skew-symmetric. Hence $\chi (X)\le 0$.
\end{proof}

\section{Minimal $K(G,n)$-complexes and thickenings}\label{sec:three}

We now turn our attention to the \textbf{existence} of orientable $2n$--manifolds having fundamental
group  of type $F_n$ and $(n-1)$--connected universal cover. We recall the following well-known
construction (see Kreck and Schafer \cite[\S 2]{kreck-schafer1}): 

\begin{proposition}\label{prop:existence}
Let $G$ be a discrete group  of type $F_n$ for $n\ge 2$. Then there exists a closed orientable $2n$--manifold  
$Z$ such that $\pi_1(Z) =G$ with $(n-1)$--connected universal cover. 
\end{proposition}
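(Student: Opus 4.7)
The plan is to realize $Z$ as the boundary of a PL regular neighborhood of a finite $n$-dimensional classifying complex for $G$, embedded in a sphere in codimension $n+1$; this is essentially the construction used in Kreck--Schafer \cite{kreck-schafer1}.

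First I would invoke the hypothesis that $G$ is of type $F_n$ to choose a model of $K(G,1)$ with finite $n$-skeleton $K$. Since $\widetilde{K(G,1)}$ is contractible, its $n$-skeleton $\widetilde K$ is $(n-1)$-connected, so $K$ is a finite $n$-dimensional CW complex with $\pi_1(K) \cong G$ and $(n-1)$-connected universal cover. After passing to a simplicial subdivision, I would PL-embed $K$ into $S^{2n+1}$; this is possible by the classical general-position fact that every finite simplicial $n$-complex embeds in $\bbR^{2n+1}$. I would then let $N \subset S^{2n+1}$ be a PL regular neighborhood of $K$, a compact orientable $(2n+1)$-manifold with boundary that deformation retracts onto $K$, and set $Z := \partial N$, a closed orientable $2n$-manifold (orientability inherited from the ambient sphere).

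The core verification is that the inclusion $Z = \partial N \hookrightarrow N$ is $n$-connected. Since $K$ has codimension $n+1$ in $S^{2n+1}$, PL general position lets me homotope any map $\alpha \colon S^k \to N$ off of $K$ whenever $k + n < 2n + 1$, i.e.\ whenever $k \leq n$, and lets me push any null-homotopy $D^{k+1} \to N$ off of $K$ whenever $k + 1 \leq n$. Because $N \setminus K$ deformation retracts onto $\partial N$, these statements translate into $\pi_k(\partial N) \isomto \pi_k(N)$ for $k < n$ and a surjection for $k = n$. In particular $\pi_1(Z) \cong G$ (using $n \geq 2$), and for $2 \leq k \leq n-1$ we then have
$$\pi_k(\widetilde Z) \cong \pi_k(Z) \cong \pi_k(N) \cong \pi_k(\widetilde N) \cong \pi_k(\widetilde K) = 0,$$
so $\widetilde Z$ is $(n-1)$-connected as required.

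The main technical step is the general-position argument establishing the $n$-connectedness of $\partial N \hookrightarrow N$; this crucially exploits that the codimension $n+1$ strictly exceeds $\dim K$, which is also the numerical input making the embedding $K \hookrightarrow S^{2n+1}$ possible in the first place. Everything else reduces to standard properties of PL regular neighborhoods and of covering spaces.
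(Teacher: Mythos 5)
Your construction is correct and is essentially the paper's: the paper's primary version doubles a $2n$-dimensional handlebody thickening of the finite $n$-skeleton $K$ of $BG$, and it explicitly records your version --- the boundary of a regular neighbourhood of an embedding $K\subset \bbR^{2n+1}$ --- as an equivalent variation. Your general-position verification that $\partial N\hookrightarrow N$ is $n$-connected supplies the standard details that the paper leaves implicit.
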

\begin{proof}
Let $K$ denote a finite CW complex of dimension $n$ with $\pi_1(K) = G$ whose universal covering is $(n-1)$-connected. For example, 
take a finite, cellular model for the classifying space $BG$, and consider its $n$-skeleton $K$. Then
we can construct a smooth $2n$-manifold $Z=M(K)$ by doubling
a $2n$-dimensional handlebody thickening of $K$.  Thus the universal cover $\tilde{Z}$ of $M(K)$ is an $(n-1)$--connected, closed orientable $2n$--manifold with a free
action of $G$ such that 
$$\pi_n(M(K)) \cong H_n(M(K); \La) \cong H^n(K; \La) \oplus H_n(K; \La), $$ 
where $\La:= \ZG$ denotes the integral group ring. Moreover, the Euler characteristic $\chi(M(K)) = 2 \chi(K)$.
A variation of this construction is to let $Z$ denote the boundary of a regular neigbourhood. for some embedding  $K \subset \bbR^{2n+1}$  of the finite $n$-complex in Euclidean space. 
\end{proof}

\begin{definition}\label{def:one} Let $G$ be  a discrete group of type $F_n$.
 A finite CW complex $K$ of dimension $n\geq 2$, with fundamental group
$\pi_1(K) = G$ and $\pi_i(K) =0$ for $1\leq i \leq n-1$,  is called a \emph{$K(G,n)$-complex}.
\end{definition}

The chain complex $C_*(\tilde{K})$ of the universal covering of a $K(G,n)$-complex affords a free $n$-step resolution of the trivial $\bZ G$--module $\bZ$. 
Conversely, we wish to realize a given finitely generated $n$-step free resolution
$$ \scF :  F_n \to F_{n-1} \to \dots \to F_1 \to F_0 \to \bZ \to 0$$
as the equivariant chain complex of a suitable $K(G,n)$-complex. Note that by Swan \cite[Theorem 1.2]{Swan:1965}, we have $\mu_n(G) \leq (-1)^n\chi(\scF)$ and that the lower bound is attained by some resolution.

\begin{proposition}\label{prop:threeone}  Let $G$ be  a discrete group of type $F_n$, and let $\scF$ be an $n$-step resolution of $\bZ$  by finitely generated free $ZG$-modules. If $n \geq 3$, then there exists a finite $K(G,n)$-complex $K$ and a $G$-equivariant chain homotopy equivalence $C_*(\wK) \simeq \scF $.
\end{proposition}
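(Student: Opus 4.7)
The plan is to start with some finite $K(G,n)$-complex and modify it by attaching cancelling cell pairs until its equivariant chain complex becomes chain homotopy equivalent to the prescribed resolution $\scF$. Since $G$ is of type $F_n$, the $n$-skeleton $L_0$ of a finite cellular model for $BG$ is a finite $K(G,n)$-complex, and its equivariant cellular chain complex $\cE := C_*(\widetilde{L_0})$ is a finite $n$-step free resolution of $\bZ$ over $\bZ G$. This will serve as the starting point.

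At the algebraic level, both $\scF$ and $\cE$ are finitely generated $n$-step free resolutions of $\bZ$, so the comparison theorem for projective resolutions together with an iterated application of Schanuel's lemma will supply finite direct sums $T$ and $T'$ of elementary trivial complexes of the form $\bZ G \xrightarrow{\id} \bZ G$ concentrated in consecutive degrees $(k,k+1)$ with $1 \leq k \leq n-1$, along with a chain isomorphism $\scF \oplus T \cong \cE \oplus T'$. Because $T$ is contractible as a complex of free $\bZ G$-modules, $\cE \oplus T'$ will then be chain homotopy equivalent to $\scF$.

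The main geometric step will be to realize the algebraic stabilization $\cE \oplus T'$ on the complex $L_0$. Each trivial summand $\bZ G^{a} \xrightarrow{\id} \bZ G^{a}$ in degrees $(k,k+1)$ is to be realized by attaching a cancelling pair of cells: for $k \geq 2$, wedge $a$ copies of $S^k$ on at the basepoint (each contributing a free $\bZ G$ to $C_k$ of the universal cover) and then attach $a$ new $(k+1)$-cells along the identity maps $S^k \to S^k$; for $k=1$, append a new 1-cell loop together with a 2-cell whose attaching map traverses it once, so that the presentation of $\pi_1$ is unaltered up to Tietze transformation. Since each added pair of cells admits an elementary collapse, the resulting finite CW complex $L_1$ will be simple-homotopy equivalent to $L_0$ and hence remain a $K(G,n)$-complex, while by construction $C_*(\widetilde{L_1}) = \cE \oplus T' \simeq \scF$. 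Setting $K := L_1$ will complete the argument.

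The main obstacle I expect to handle carefully is verifying that the cancelling pair construction can be carried out in every required degree $(k,k+1)$ with $1 \leq k \leq n-1$ without inadvertently altering $\pi_1$ or introducing unwanted contributions to the boundary maps, and that the basepoint and attaching-map choices yield the claimed direct sum decomposition of the chain complex. This is where the hypothesis $n \geq 3$ enters: for $n \geq 3$ the wedged spheres have dimension at least $2$ in the top degree pair $(n-1,n)$, so $\pi_1$ is untouched there and the standard cell-attaching machinery applies cleanly. For $n=2$ the analogous realization question in the only available degree pair $(1,2)$ is precisely Wall's D2 problem, which is why the hypothesis $n \geq 3$ is retained.
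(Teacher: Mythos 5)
There is a genuine gap at the algebraic step, and it is fatal to the whole strategy. Two finitely generated free $n$-step resolutions $\scF$ and $\cE$ of $\bZ$ are \emph{not} chain homotopy equivalent in general: viewed as chain complexes they have $H_n=\ker d_n$, and these $(n+1)$-st syzygy modules are only \emph{stably} isomorphic, i.e.\ after adding free summands in degree $n$ alone. Concretely, for $G$ finite the $\bZ$-ranks of $\ker d_n^{\scF}$ and $\ker d_n^{\cE}$ differ by $|G|\,|\chi(\scF)-\chi(\cE)|$, so the two modules are not even abstractly isomorphic once the Euler characteristics differ. Since your $T$ and $T'$ are acyclic, an isomorphism $\scF\oplus T\cong\cE\oplus T'$ would force $H_n(\scF)\cong H_n(\cE)$, which fails exactly in the situation the proposition is needed for, namely realizing a \emph{minimal} resolution with $\chi(\scF)=\mu_n(G)$, in general strictly smaller than the Euler characteristic of the $n$-skeleton of $BG$. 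The same obstruction appears geometrically: attaching cancelling cell pairs produces $L_1\simeq L_0$, so $C_*(\widetilde L_1)$ is always chain homotopy equivalent to $\cE$ and can never become equivalent to a resolution of smaller Euler characteristic. Schanuel's lemma does give the stable isomorphism of syzygies, but only after adding a free module in a single degree, which is not an operation realized by a cancelling cell pair (and which changes $\chi$). Your final paragraph about $\pi_1$ and the D2 problem addresses a real but secondary issue; the primary failure is the one above.

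The workable route, and the one the paper takes, is to build $K$ out of the prescribed $\scF$ rather than to deform $L_0$ towards it: first replace $\scF$ by a chain homotopy equivalent free complex $\scF'$ whose part in degrees $\le 2$ agrees with the chain complex of a presentation $2$-complex for $G$ (\cite[Lemma 8.12]{Hambleton:2013}; this normalization in low degrees is where $n\ge 3$ is genuinely used, since it takes place strictly below the top degree), and then attach cells equivariantly, degree by degree, realizing the boundary maps of $\scF'$ as in \cite[Lemma 3.1]{Swan:1960b}. The cell-attaching machinery in degrees $\ge 3$ is unobstructed by Hurewicz-type arguments, and the resulting finite $n$-complex $K$ has $C_*(\wK)\simeq\scF$ by construction.
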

\begin{proof} Let $n \geq 3$, we can apply \cite[Lemma 8.12]{Hambleton:2013} to show that $\scF$ is chain homotopy equivalent to a finitely generated free complex $\scF'$ which agrees with the $2$-skeleton of a model for $K(G,1)$. Then the construction of \cite[Lemma 3.1]{Swan:1960b} (credited to Milnor) provides the required complex $K$ by successively attaching $i$-cells equivariantly using the boundary maps from the chain complex  $\scF'$.
\end{proof}

\begin{remark} For finite groups, Swan \cite[Corollary 5.1]{Swan:1965} shows that under certain additional assumptions, one can geometrically realize the actual sequence $f_0, f_1, f_2, \dots$ of ranks for the $i$-chains of $\scF$. We also record the facts due to Swan that $\mu_n(G) \geq 1$ for $n$ even, and $\mu_n(G) \geq 0$ for $n$ odd if $G \neq 1$ is finite (see \cite[\S 1]{Swan:1965}). 
 \end{remark}

\begin{corollary}\label{cor:upperone} If $n \geq 3$, then for any discrete group of type $F_n$ we have $q_{2n}(G) \leq 2\mu_n(G)$. In particular, if $n$ is even and  $G$ is a finite group with $\mu_n(G) =1$, then  $G$ is the fundamental group of a rational homology $2n$-sphere. 
\end{corollary}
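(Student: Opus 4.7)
The plan is to combine Proposition \ref{prop:threeone} with the doubling construction from the proof of Proposition \ref{prop:existence}. Using Swan's characterization of $\mu_n(G)$ recalled in the excerpt, choose a finitely generated $n$-step free resolution
$$\scF : F_n \to F_{n-1} \to \dots \to F_0 \to \bZ \to 0$$
of the trivial $\ZG$-module with $(-1)^n\chi(\scF) = \mu_n(G)$. Since $n \ge 3$, Proposition \ref{prop:threeone} provides a finite $K(G,n)$-complex $K$ whose equivariant cellular chain complex is chain homotopy equivalent to $\scF$; because the number of $i$-cells of $K$ equals $\rk_{\ZG} F_i$, we have $\chi(K) = \chi(\scF)$, and hence $(-1)^n\chi(K) = \mu_n(G)$.

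Next, let $W$ be a smooth $2n$-dimensional handlebody thickening of $K$ and form $M := W \cup_{\partial W} W$, the double of $W$. By the construction in the proof of Proposition \ref{prop:existence}, $M$ is a closed orientable $2n$-manifold with $\pi_1(M) = G$ and $(n-1)$-connected universal cover, and satisfies $\chi(M) = 2\chi(K)$. Therefore
$$q_{2n}(G) \,\le\, (-1)^n \chi(M) \,=\, 2\,(-1)^n\chi(K) \,=\, 2\mu_n(G),$$
which proves the first assertion.

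For the second statement, take $n$ even and $G$ finite with $\mu_n(G) = 1$; the construction above then yields such an $M$ with $\chi(M) = 2$. Since $G$ is finite, a transfer argument gives $H_i(M;\bQ) \cong H_i(\widetilde M;\bQ)^G$, which vanishes for $1 \le i \le n-1$ by the connectivity of $\widetilde M$. Poincar\'e duality over $\bQ$ extends this vanishing to $n+1 \le i \le 2n-1$, and then $\chi(M) = 2$ forces $\dim_{\bQ} H_n(M;\bQ) = 0$. Hence $M$ has the rational homology of $S^{2n}$.

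The corollary is essentially a concatenation of Propositions \ref{prop:existence} and \ref{prop:threeone}, so I do not anticipate any genuine obstacle; the only point requiring a moment's care is the Euler-characteristic bookkeeping that identifies $(-1)^n\chi(K)$ with $\mu_n(G)$ for a minimizing resolution $\scF$.
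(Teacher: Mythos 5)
Your proposal is correct and follows essentially the same route as the paper: take a minimal $n$-step free resolution realizing $\mu_n(G)$, geometrically realize it via Proposition \ref{prop:threeone} as a finite $K(G,n)$-complex $K$, and double a thickening as in Proposition \ref{prop:existence} to get $M$ with $\chi(M)=2\chi(K)$. The only nitpick is that Proposition \ref{prop:threeone} gives $C_*(\wK)$ chain homotopy equivalent to $\scF$ rather than cell-by-cell equality of ranks, but since Euler characteristic is a chain homotopy invariant of finite free complexes this does not affect the argument.
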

\begin{proof} We apply Proposition \ref{prop:threeone} to a minimal $n$-step resolution $\scF$ with $\chi(\scF) =\mu_n(G)$, and obtain a finite $K(G,n)$-complex $K$. The manifold $Z= M(K)$ constructed in Proposition \ref{prop:existence} provides the upper bound $q_{2n}(G) \leq \chi(Z) = 2 \mu_n(G)$.
\end{proof}

 We now consider the case $n=2$, where the argument above fails at the first step. To establish our upper bound for $q_4(G)$ we need a more general construction and some results of C.~T.~C.~Wall \cite{Wall:1965,Wall:1966}.
 
 \begin{definition} A finite complex $X$ \emph{satisfies Wall's \textup{D2}-conditions} if  $H_i(\wX) =0$, for $i >2$, and $H^{3}(X; \cB) = 0$, for all coefficient bundles $\cB$. Here $\wX$ denotes the universal covering of $X$. If these conditions hold, we will say that $X$ is a
\emph{\fake}. If every \fake\ with fundamental group $G$ is homotopy equivalent to a finite $2$-complex, then we say that \emph{$G$ has the \textup{D2}-property}.
 \end{definition}

 In \cite[p.~64]{Wall:1965}, Wall proved that a finite complex $X$ satisfying the D2-conditions is homotopy equivalent to a finite $3$-complex. We will therefore assume that all our D2-complexes have $\dim X \leq 3$. It is not known at present whether all discrete groups have the \textup{D2}-property. 
 Note that $\mu_2(G) \leq (1- \Def(G))$ by \cite[Proposition 1]{Swan:1965}, and equality holds if  $G$ has the \textup{D2}-property.

\begin{proposition}[{\rm \cite{Wall:1965},  \cite[Corollary 2.4]{Hambleton:2019}}] 
\label{prop:threetwo} Any finitely generated free resolution 
$$\scF : F_2  \to F_1 \to F_0 \to \bZ \to 0$$
over $\ZG$ is chain homotopy equivalent to $C_*(X)$, where  $X$ is a  finite \fake.
\end{proposition}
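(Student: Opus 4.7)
The plan is to realize the algebraic resolution $\scF$ geometrically as the cellular chain complex of a finite CW-complex of dimension at most $3$, starting from a presentation $2$-complex for $G$ and modifying it by wedging on $2$-spheres and attaching $3$-cells. Since $G$ is of type $F_2$ it is finitely presented, so I fix a finite presentation $\langle x_1,\dots,x_d \mid r_1,\dots,r_m\rangle$ and let $L$ be the associated presentation $2$-complex. Its cellular chain complex $C_*(\widetilde L)$ is itself a finitely generated $2$-step free resolution of the trivial $\ZG$-module $\bZ$, and the goal is to modify $L$ so that this chain complex becomes chain homotopy equivalent to $\scF$.

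The algebraic input is a Schanuel-type comparison: any two finitely generated $2$-step free resolutions of $\bZ$ are chain homotopy equivalent after adding finitely many elementary contractible free summands of the form $0 \to \ZG \xrightarrow{\id} \ZG \to 0$ placed in suitable pairs of consecutive degrees. The existence of a chain map $C_*(\widetilde L) \to \scF$ lifting the identity on $\bZ$ is standard, and a mapping-cone/stabilization argument then promotes it to a chain homotopy equivalence between stabilized versions.

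Each algebraic stabilization admits a geometric counterpart. Wedging $L$ with a copy of $S^2$ adds a free $\ZG$-summand to $C_2(\widetilde L)$. Attaching a $3$-cell along a spherical class $\alpha \in \pi_2$ introduces a new free $\ZG$-summand in $C_3$ with boundary in $C_2$ equal to the Hurewicz image of $\alpha$; after wedging on enough $2$-spheres, every prescribed element of $C_2$ is realized by such a spherical class. Performing these moves in the order dictated by the algebraic comparison produces a finite $3$-dimensional CW-complex $X$ with $\pi_1(X) = G$ and with $C_*(\widetilde X)$ chain homotopy equivalent to $\scF$.

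Finally I verify Wall's D2-conditions for $X$. Because $\scF$ is concentrated in degrees $\le 2$, the chain equivalence splits $C_*(\widetilde X)$ up to a contractible summand so that the top differential $d_3\colon C_3 \to C_2$ is a split monomorphism whose complement is chain equivalent to $\scF$; this forces $H_3(\widetilde X) = 0$ and, dualizing into any coefficient bundle $\cB$, also $H^3(X;\cB) = 0$ since the splitting implies $d_3^{*}$ is surjective. I expect the main technical hurdle to be the geometric realization step: one must promote each prescribed element of $C_2(\widetilde L)$ to a genuine attaching class in $\pi_2$, using that after $S^2$-stabilization the Hurewicz map hits the needed summands, and one must simultaneously keep track of $\pi_1$ throughout so that the extended complex still has fundamental group $G$. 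Bypassing the use of $3$-cells by insisting on an honest $2$-complex would run into Wall's (open) D2-problem; allowing D2-complexes of dimension $3$ is exactly what supplies the required flexibility.
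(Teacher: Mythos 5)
The paper itself offers no proof of this proposition—it cites Wall and \cite[Corollary 2.4]{Hambleton:2019}—and your overall route (presentation complex $L$, wedge on $2$-spheres, attach $3$-cells, compare chain complexes) is exactly the strategy of those references. The gap is in the algebraic lemma you hang the argument on. Adding elementary contractible summands $0 \to \ZG \xrightarrow{\id} \ZG \to 0$ never changes homology, whereas $H_2(C_*(\widetilde L)) = \pi_2(L)$ and $H_2(\scF) = \ker(F_2 \to F_1)$ are in general only \emph{stably} isomorphic, not isomorphic: take $\scF = C_*(\widetilde L) \oplus (\ZG$ placed in degree $2$ with zero differential$)$. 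So no amount of elementary stabilization makes the two $2$-step resolutions chain homotopy equivalent, and in particular the $3$-cells you attach cannot all be of the ``cancel a wedged sphere'' type. The correct mechanism is Schanuel's lemma, which supplies an isomorphism $\pi_2(L) \oplus \ZG^r \cong H_2(\scF) \oplus \ZG^s$ for suitable $r,s$; one wedges $r$ spheres onto $L$ and attaches $s$ three-cells along a basis of the free complement $\ZG^s \subseteq \pi_2(L \vee r S^2)$, so that $d_3$ is an injection onto a free direct summand of $\ker d_2$ rather than an identity block of a contractible summand. One then lifts $\id_{\bZ}$ to a chain map between $C_*(\widetilde X)$ and $\scF$ and checks it induces an isomorphism on \emph{all} homology groups (including $H_2$ and $H_3$), whence it is a chain homotopy equivalence of bounded-below free complexes. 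Your closing observation—that the D2-conditions then follow because $\scF$ vanishes in degree $3$—is correct once that equivalence is in hand.

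A second, smaller error: after wedging on $2$-spheres it is not true that ``every prescribed element of $C_2$ is realized by a spherical class.'' The Hurewicz image of $\pi_2$ in $C_2$ is exactly $\ker d_2$; wedging spheres enlarges $C_2$ and $\ker d_2$ by the same free summand and never makes a non-cycle spherical. This happens to be harmless here, since $3$-cells only ever need to be attached along cycles, but as written it claims more freedom than you have and obscures the point that the attaching classes must be chosen inside $\pi_2$ via the Schanuel isomorphism above.
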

If  we apply this to a minimal resolution with $\chi(\scF) = \mu_2(G) = \mu'_2(G)$, then if $G$ is finite the module $H_2(\wX;\bZ)$ is a minimal  $\bZ$-rank representative of the stable module $\Omega^3(\bZ)$. The following result may also be of independent interest  (it applies to any finitely presented group $G$ which is \emph{good} in the sense of Freedman \cite[p.~99]{freedman-quinn1}, in particular to poly-(finite or cyclic) groups).

\begin{theorem}\label{prop:fakeembed} For any finite \fake\  $X$  with good fundamental group there exists a closed topological $4$-manifold $M(X)$ with $\pi_1(M(X)) = \pi_1(X)$ amd $\chi (M(X)) = 2\chi(X)$.
\end{theorem}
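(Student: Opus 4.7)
The plan is to build a compact topological $5$-manifold $V$ having $X$ as a spine, and to set $M(X) := \partial V$. Since $\chi(\partial V) = 2\chi(V) = 2\chi(X)$ for any compact odd-dimensional manifold $V$ with spine $X$, the Euler characteristic claim would follow automatically. I would construct $V$ inductively by attaching topological $5$-dimensional handles of indices $0, 1, 2, 3$, one per cell of $X$. The $0$-handle is a $5$-ball; each $1$- or $2$-handle has attaching sphere $S^0$ or $S^1$ respectively, which embeds with trivial normal bundle in the $4$-dimensional boundary of the previous stage, with framings chosen canonically. Call the result $V^{(2)}$, a compact $5$-manifold with spine $X^{(2)}$ and boundary $M_0 := \partial V^{(2)}$, a closed topological $4$-manifold with $\pi_1(M_0) = G$ and $\chi(M_0) = 2\chi(X^{(2)})$.

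The essential step is to attach a $3$-handle for each $3$-cell of $X$. Each $3$-cell has attaching map $\varphi \colon S^2 \to X^{(2)}$, and pushing $\varphi$ into a collar of $M_0$ produces a map $f \colon S^2 \to M_0$ representing a lift of $[\varphi] \in \pi_2(X^{(2)}) = \pi_2(V^{(2)})$ to $\pi_2(M_0)$. I need to replace each such $f$ by a locally flat embedded $2$-sphere in $M_0$ with trivial normal bundle, pairwise disjoint for distinct $3$-cells. This is where the good-group hypothesis becomes essential: by the topological sphere-embedding theorem of Freedman and Quinn (see \cite{freedman-quinn1}) for $4$-manifolds with good fundamental groups, an immersion $f \colon S^2 \to M_0$ with vanishing equivariant self-intersection form, vanishing pairwise equivariant intersections with the other chosen spheres, and admitting a geometric dual, is regularly homotopic to a locally flat topological embedding. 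These hypotheses would be verified by using that $\varphi$ factors through the $2$-complex $X^{(2)}$ (so the equivariant self- and cross-intersection forms vanish) and by using the belt $2$-spheres of the $2$-handles of $V^{(2)}$---possibly after a preliminary stabilization by cancelling $(2,3)$-handle pairs---as the required geometric duals; the algebraic pairings are encoded in the chain-level boundary $d_3 \colon C_3(\wX) \to C_2(\wX)$. The normal bundle of the resulting embedded sphere is automatically trivial because it bounds the cocore disk of the intended $3$-handle.

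The principal obstacle I anticipate is controlling $\pi_1(\partial V)$. A $2$-surgery on a $4$-manifold, which is what attaching a $5$-dimensional $3$-handle does on the boundary, can in general introduce a new $\pi_1$-generator corresponding to the belt circle of the handle. To show that $\pi_1(\partial V) \to \pi_1(V) = G$ is nonetheless an isomorphism, I would invoke the D2-condition $H_3(\wX) = 0$: the injectivity of $d_3 \colon C_3(\wX) \to C_2(\wX)$, combined with a Poincar\'e--Lefschetz duality argument for the $5$-manifold $V$, would force each belt circle to bound a disk in $\partial V$, yielding $\pi_1(\partial V) = G$. The detailed surgery-theoretic arguments, verification of the sphere-embedding hypotheses, control of framings, and $\pi_1$-preservation argument would be carried out in Appendix~A, as the theorem statement announces.
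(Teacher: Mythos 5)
Your route is genuinely different from the paper's (which thickens only the $2$-skeleton $K\subset X$, doubles, and then combines a self-homotopy equivalence of $K\vee r(S^2)$ with the $5$-dimensional $s$-cobordism theorem and a hyperbolic splitting of the boundary's intersection form, never embedding the attaching spheres of the $3$-cells at all). Your direct thickening strategy is plausible, but as written it has a genuine gap at its central step. The Freedman--Quinn sphere embedding theorem requires, besides $\mu(f_i)=0$ and $\lambda(f_i,f_j)=0$, a family of \emph{algebraically dual} classes $g_j$ with $\lambda(f_i,g_j)=\delta_{ij}$; without such duals the theorem fails even for good fundamental groups. In your setup the pairing of the attaching sphere of the $i$-th $3$-cell with the belt sphere of the $j$-th $2$-handle is the corresponding matrix entry of $d_3\colon C_3(\wX)\to C_2(\wX)$, and this matrix is essentially never the identity, nor need it contain any unit entries. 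To manufacture duals you must take $\La$-linear combinations of belt spheres using a left inverse $s\colon C_2(\wX)\to C_3(\wX)$ of $d_3$, and such a left inverse exists precisely because of the \emph{second} D2-condition $H^3(X;\cB)=0$ for all coefficient bundles (apply it with $\cB=C_3(\wX)$), which makes $d_3$ a \emph{split} injection. You invoke only $H_3(\wX)=0$, i.e.\ bare injectivity of $d_3$; that is strictly weaker (multiplication by $2$ is injective but not split) and does not produce duals. This is the one place where the full D2-hypothesis must enter, and your proposal misses it.

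Two further points. First, your $\pi_1$-control argument uses the wrong mechanism: injectivity of $d_3$ plus Poincar\'e--Lefschetz duality controls $H_1$ of the boundary, not $\pi_1$, and surgery on a $2$-sphere adds the meridian circle as a potential new generator that homology cannot see (the kernel could be perfect). The correct statement is that the meridian bounds the punctured \emph{geometrically} transverse sphere which the Freedman--Quinn theorem supplies along with the embedding; so once you have honest duals, the $\pi_1$ problem disappears and no separate duality argument is needed. Second, ``the normal bundle is trivial because it bounds the cocore of the intended $3$-handle'' is circular; triviality follows instead from $\lambda(f_i,f_i)=0$. And that vanishing, together with $\mu(f_i)=0$ and $\lambda(f_i,f_j)=0$, needs more justification than ``$\varphi$ factors through the $2$-complex'' (the inclusion $\CP^1\subset\CP^2$ factors through a $2$-complex but has $\lambda=1$): what you actually need is that the spine summand $H_2(K;\La)\subset H_2(\partial V^{(2)};\La)$ is totally isotropic, which uses the product (double) structure of $\partial V^{(2)}$ so that spine classes can be pushed to either end of the collar and made disjoint from one another and from their own push-offs.
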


For continuity we defer the proof of this result to Appendix A.

\begin{corollary}\label{cor:upper} For $G$ a  finitely presented good  group, $q_{4}(G) \leq 2\mu_2(G)$. In particular, $\mu_2(G) =1$  and $G$ finite implies that $G$ is the fundamental group of a rational homology $4$-sphere.
\end{corollary}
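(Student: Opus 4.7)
The plan is to chain together the two main results of this section: the algebraic-to-topological realization of Proposition \ref{prop:threetwo} and the thickening Theorem \ref{prop:fakeembed}. First, I would choose a finitely generated free $\ZG$-resolution
$$\scF : F_2 \to F_1 \to F_0 \to \bZ \to 0$$
that attains the minimum, i.e. with $\chi(\scF) = \mu_2(G)$. (For finite $G$ we have $\mu_2(G)=\mu'_2(G)$ by Remark \ref{rem:excepttwo}, since exceptional pairs only occur for odd $k\ge 3$.) By Proposition \ref{prop:threetwo}, $\scF$ is chain homotopy equivalent to $C_*(X)$ for a finite \fake\ $X$ with $\pi_1(X) = G$, and therefore $\chi(X) = \chi(\scF) = \mu_2(G)$.

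Next, since $G$ is good by hypothesis, I can apply Theorem \ref{prop:fakeembed} to $X$ to produce a closed topological $4$-manifold $M(X)$ with $\pi_1(M(X)) = G$ and $\chi(M(X)) = 2\chi(X) = 2\mu_2(G)$. For $n=2$, the condition ``$(n-1)$-connected universal cover'' in the definition of $q_{2n}(G)$ is automatic, so $M(X)$ is admissible in the infimum defining $q_4(G)$, yielding $q_4(G) \leq \chi(M(X)) = 2\mu_2(G)$.

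For the ``in particular'' statement, suppose $\mu_2(G)=1$ and $G$ is finite. The upper bound gives $q_4(G) \leq 2$, so there exists a closed orientable $4$-manifold $M$ with $\pi_1(M) = G$ and $\chi(M) \leq 2$. However, for any closed orientable $4$-manifold $M$ with finite fundamental group, $b_1(M) = 0$, and Poincar\'e duality gives $\chi(M) = 2 + b_2(M) \geq 2$. Hence $\chi(M) = 2$ and $b_2(M) = 0$, which forces $H_*(M;\bQ) \cong H_*(S^4;\bQ)$, so $M$ is a rational homology $4$-sphere with $\pi_1(M) = G$.

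The proof itself is essentially a short deduction from the two cited results, so there is no genuine obstacle here; all the real content has been absorbed into Proposition \ref{prop:threetwo} and Theorem \ref{prop:fakeembed}. The only point requiring care is the distinction between $\mu_2(G)$ and $\mu'_2(G)$, which is harmless in dimension two by Remark \ref{rem:excepttwo}, and the elementary verification that $\chi \geq 2$ for closed orientable $4$-manifolds with finite fundamental group, which pins down the rational homology $4$-sphere conclusion.
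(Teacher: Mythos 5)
Your proposal is correct and follows essentially the same route as the paper: realize a minimal $2$-step free resolution by a finite \fake\ via Proposition \ref{prop:threetwo}, then apply Theorem \ref{prop:fakeembed} to obtain a closed $4$-manifold with Euler characteristic $2\mu_2(G)$. The extra remarks you include (that $\mu_2=\mu'_2$ here, and that $\chi\geq 2$ forces the rational homology $4$-sphere conclusion when $\mu_2(G)=1$) are exactly the points the paper leaves implicit.
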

\begin{proof} We apply Proposition \ref{prop:threetwo} to realize a minimal $2$-step resolution by a finite \fake, and then Theorem \ref{prop:fakeembed} provides a suitable $\bQ S^4$ manifold.
\end{proof}

\begin{proof}[The proof of Theorem A]
Concatenating our previous results, we have obtained the estimates 
$$\max \{e_n(G), \mu'_{n}(G)-\mu'_{n-1}(G)\}\le q_{2n}(G)\le 2 \mu_{n}(G).$$
for any finite group $G$. For the lower bound, we apply Theorem \ref{thm:invariant} and Lemma \ref{lem:twoone}. For the upper bound, 
we apply  Corollary \ref{cor:upperone} if $n >2$, and Corollary \ref{cor:upper} for $n=2$.
\end{proof}

We now prepare for the proof of Theorem B.
The next result, due to Swan and Wall, shows that arbitrary periodic groups appear as fundamental groups of rational homology spheres.
\begin{lemma}\label{lem:periodic}  If $G$ is a finite group with periodic cohomology of period dividing $2k+2$,  then  $\mu_{2k}(G) =1$ for $k \geq 1$.
\end{lemma}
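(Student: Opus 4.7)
The plan is to prove the matching inequalities $\mu_{2k}(G) \ge 1$ and $\mu_{2k}(G) \le 1$. For the lower bound I would simply invoke the Swan fact cited in the remark just after Proposition \ref{prop:threeone} (and again in the paragraph following Remark 3.4) that $\mu_n(G) \ge 1$ for every nontrivial finite group $G$ and every positive even integer $n$; specialising to $n = 2k$ gives $\mu_{2k}(G) \ge 1$. Next, by Remark \ref{rem:excepttwo} every exceptional pair $(G, n)$ has $n \geq 3$ odd, so $(G, 2k)$ is never exceptional and therefore $\mu_{2k}(G) = \mu'_{2k}(G)$. It thus suffices to establish the upper bound $\mu'_{2k}(G) \le 1$.

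For this I would use the hypothesis on the period directly. Since the cohomological period of $G$ divides $2k+2$, the trivial module satisfies a stable isomorphism $\Omega^{2k+2}(\bZ) \cong \bZ$ of $\bZ G$-modules, and by Swan's work on periodic resolutions \cite{Swan:1960a, Swan:1960b} this is realised by an exact sequence
$$0 \to \bZ \to P_{2k+1} \to P_{2k} \to \cdots \to P_1 \to P_0 \to \bZ \to 0$$
of finitely generated projective $\bZ G$-modules in which every $P_i$ has $\bZ$-rank exactly $|G|$, i.e.\ is a rank-one projective. Truncating after degree $2k$ produces a $2k$-step projective resolution whose partial Euler characteristic (divided by $|G|$) equals
$$\sum_{i=0}^{2k}(-1)^i \rho(P_i) = \sum_{i=0}^{2k}(-1)^i = 1,$$
since there are $2k+1$ alternating terms beginning and ending with $+1$. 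By definition this yields $\mu'_{2k}(G) \le 1$, and combining with the lower bound gives $\mu_{2k}(G) = 1$.

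The main delicate point will be justifying that every $P_i$ in the periodic resolution can be chosen of $\bZ$-rank $|G|$. When Swan's finiteness obstruction in $\widetilde K_0(\bZ G)$ vanishes (for example, for cyclic, generalised quaternion, and more generally finite $p$-groups with periodic cohomology), one obtains an honest free periodic resolution with this property. When the obstruction is non-zero, as happens for the Milnor groups $Q(8p,q)$ already referenced in Remark \ref{rem:except}, one of the $P_i$ must instead be taken as a non-free rank-one projective, but the $\bZ$-rank $|G|$ equality is still preserved throughout the resolution. This is precisely the content of the classical theorems of Swan cited above, and once it is invoked the counting argument above is completely mechanical.
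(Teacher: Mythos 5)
Your reduction of the problem to the upper bound $\mu'_{2k}(G)\le 1$ (via Swan's inequality $\mu_{2k}(G)\ge 1$ and the fact that exceptional pairs $(G,n)$ have $n$ odd) is fine and matches the paper's logic. The gap is in the upper bound itself: the assertion that a period-$(2k+2)$ projective resolution can be chosen with \emph{every} $P_i$ of $\bZ$-rank $|G|$ is false for every non-cyclic periodic group. Take $G=Q_8$, of period $4$: the augmentation ideal $I(G)=\ker(P_0\to\bZ)$ needs $d(G)=2$ generators, so no rank-one projective $P_1$ can surject onto it (such a surjection would induce a surjection $\bZ\cong (P_1)_G\to I(G)_G\cong(\bZ/2\bZ)^2$). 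Indeed the minimal free resolution of $\bZ$ over $\bZ Q_8$ has ranks $1,2,2,1,1,2,2,1,\dots$. Swan's periodicity theorems produce a resolution in which all terms but \emph{one} are free, with no control forcing all terms to be rank one, so the cited results do not support your claim and the count $\sum_{i=0}^{2k}(-1)^i\rho(P_i)=\sum_{i=0}^{2k}(-1)^i$ collapses.

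What is salvageable: exactness of the full sequence $0\to\bZ\to P_{2k+1}\to\cdots\to P_0\to\bZ\to 0$ forces $\sum_{i=0}^{2k+1}(-1)^i\rk_{\bZ}P_i=0$, so the partial Euler characteristic of the $2k$-truncation equals $\rk_{\bZ}P_{2k+1}/|G|$ regardless of the intermediate ranks. Hence you only need the \emph{top} term $P_{2k+1}$ to be a rank-one projective, equivalently a realization of $\Omega^{2k+1}(\bZ)\cong\Omega^{-1}(\bZ)$ as an honest $(2k+1)$-st kernel of $\bZ$-rank $|G|-1$; this is not automatic from the stable isomorphism $\Omega^{2k+2}(\bZ)\cong\bZ$ and is precisely what the paper supplies. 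Its proof takes Swan's finitely dominated complex $Y$ with $\widetilde Y\simeq S^{2k+1}$ (together with Wall's analysis of Poincar\'e complexes) to obtain a projective resolution with top kernel $I(G)^*$ and $\chi=1$, giving $\mu'_{2k}(G)=1$, and then upgrades to a free resolution by a Roiter replacement argument. Your proposal becomes correct if the all-terms-rank-one claim is replaced by this statement about the top term and justified by Swan's construction.
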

\begin{proof}  We will discuss the case $k=1$ for groups of period $4$.
Swan \cite{Swan:1960b} constructed a finitely dominated Poincar\'e $3$-complex $Y$ with $\pi_1(Y) = G$, and Wall \cite[Corollary 2.3.2]{wall-pc1} shows that $Y$ is obtained from a \fake\ by attaching a single $3$-cell. The chain complex $C_*(\widetilde Y)$ provides a projective resolution
$$\scF' :    P \to F_1 \to F_0  \to \bZ \to 0$$
with $\chi(\scF') =1$, where $P$ is projective, $F_1$ and $F_0$ are free and $I(G)^* =\ker d_2(\scF')$. This shows that $\mu'_2(G) =1$ and so  $\mu_2(G) =  \mu'_2(G) =1$ by Swan's results.

One can give a direct argument for this last step. 
By adding a projective $Q$ so that $P\oplus Q = F$ is free, we obtain a free resolution
$$ \scF : F \to F_1 \to F_0 \to \bZ \to 0$$
with $I(G)^* \oplus Q = \ker d_2(\scF)$. By the ``Roiter Replacement Lemma" (see \cite[Proposition 5]{Roiter:1966}, or
\cite[Theorem 3.6]{Jacobinski:1968}), $I(G)^* \oplus Q = J \oplus F'$, where $F'$ is free and $J$ is locally isomorphic to $I(G)^*$, so $\rk_{\bZ} (J) = \rk_{\bZ}I(G)^*$.
We now  divide out the image of $F'$ in $F$ (a direct summand)  to obtain a free resolution
$$ \scF'' : F'' \to F_1 \to F_0 \to \bZ \to 0$$
with  $J =\ker d_2(\scF'')$ and $\chi(\scF'') =1$. Hence $\mu_2(G) = 1$.
  
 A similar argument shows that $\mu_{2k}(G) =1$, for all $k >1$,   if $G$ has periodic cohomology with period dividing $2k+2$. Details will be left to the reader.
 \end{proof}
\begin{remark}
The calculation in Lemma \ref{lem:periodic} together with Theorem \ref{prop:fakeembed}  provides an alternate proof of \cite[Corollary 4.4]{hk2}.  However, the essential ingredients are the same in both arguments. 
\end{remark}

\begin{proof}[The proof of Theorem B]  By assumption, the group $G$ is periodic of even period $q$. In the first case, if $q$ divides $n+2$, then $n$ is even and  $\mu_n(G) =1$ by Lemma \ref{lem:periodic}. By Theorem A, we have the inequalities 
$$2 \leq q_{2n}(G) \leq 2 \mu_2(G)=2$$ 
and hence $q_{2n}(G) =2$.
 
 In the second case, $n$ is odd and the minimal Euler characteristic $q_{2n}(G)\geq 0$ by Corollary \ref{cor:euler}. We will show that the lower bound is realized when  $G$ is a periodic group of even period $q$, provided that $2q$ divides $n+1$. 
 
 This follows from the solution of the space form problem:
 Madsen, Thomas and Wall  \cite[Theorem 1]{Madsen:1983a},  \cite[Corollary 12.6]{Wall:1979} proved that there exists a finite Poincar\'e duality complex $X$ (called a \emph{finite Swan complex})  of dimension $(2k-1)$, with $\pi_1(X) = G$ and universal covering $\widetilde X \simeq S^{2k-1}$, whenever $k \equiv 0 \pmod{ e(G)}$,  where $e(G)$ is the Artin exponent of $G$ \cite[p.~94]{Lam:1968}.  Moreover, a detailed analysis of the group cohomology of periodic groups shows that $2e(G)$ is equal to $q$ or $2q$, depending on the structure of its $2$-hyperelementary subgroups (see  Wall \cite[p.~542]{Wall:1979}, where the notation $2d(\pi)$ is used for the period of a periodic group $\pi$).
 
 For any finite Swan complex $X$, there exists a degree one normal map $(f, b)\colon N \to X$, where $N^{n}$ is a closed, topological $n$-manifold (see \cite[Proposition 2]{Wall:1978} and \cite[Corollary 3.3]{Thomas:1971}). We then have a degree one normal map of pairs 
$$(f\times \id,  b\times \id)\colon (N \times D^{n+1}, N \times S^n) \to (X \times D^{n+1}, X \times S^n).$$
By Wall's ``$\pi$-$\pi$ Theorem"  \cite[Theorem 3.3]{wall-book}, this normal map is normally cobordant to a homotopy equivalence of pairs. It follows that $X \times S^n$ is homotopy equivalent to a closed topological $2n$-manifold. 
Since $X \times S^n$ has Euler characteristic zero, these examples show that $q_{2n}(G) = 0$ as required. 
\end{proof}
 
 \begin{remark}[Smooth examples] If $G$ satisfies the \emph{$2p$-conditions} (meaning that every subgroup of order $2p$ is cyclic, for $p$ prime), Madsen, Thomas and Wall  \cite[Theorem 5]{Madsen:1983a} proved that there exists a closed, oriented,  smooth $(2k-1)$-manifold $N^{2k-1}$ with $\pi_1(N) = G$ and universal covering $\widetilde N = S^{2k-1}$, whenever $k \equiv 0 \pmod{ e(G)}$. Under this extra assumption, the products $N^n \times S^{n}$, for $n = 2qr-1$ provide \emph{smooth manifolds} realizing the minimium value $q_{2n}(G) = 0$.
 \end{remark} 
%
%

\begin{remark}[The exceptional case]\label{rem:exceptthree} In the arguments above, we have not used the full strength of the Madsen-Thomas-Wall results, which produce smooth space forms in the minimal dimension $q-1$ whenever $q= 2e(G)$ (see the discussion on \cite[p.~142]{Madsen:1983a}). This observation does give additional examples of periodic groups with $q_{2n}(G) = 0$, e.g when $n+1 \equiv 2 \pmod{4}$, but deciding whether $2e(G)$  equals $q$ or $2q$ for a given $G$ involves difficult number theory. 

If the pair $(G. n)$ is exceptional, then surgery theory can be used to study $q_{2n}(G)$ as follows
 (see  \cite[\S\S2-3]{Madsen:1976} for background on the space form problem):
\begin{enumerate}
\item For any periodic group with  period $n+1$, there exists a \emph{finitely dominated} Swan complex $X$ with $\pi_1(X) = G$ and universal covering $\wX \simeq S^n$ (see \cite[Proposition 3.1]{Swan:1960b}). 
\item For any finitely dominated Swan complex $X$, there exists a degree one normal map $(f, b)\colon N \to X$, where $N^{n}$ is a closed, oriented, topological $n$-manifold (see \cite[Proposition 2]{Wall:1978} and \cite[Corollary 3.3]{Thomas:1971}).
\item The product $X \times S^n$ is homotopy equivalent to a finite Poincar\'e complex (by the product formula for Wall's finiteness obstruction \cite[Theorem 0.1]{Gersten:1966}).
\item We have a degree one normal map 
$$(f\times \id,  b\times \id)\colon  N \times S^n \to X \times S^n.$$
 with   surgery obstruction $\lambda(f,b) \in L^h_{2n}(\bZ G)$ determined by the Wall finiteness obstruction 
 $\sigma (X) \in \wK_0(\bZ G)$ (see \cite[p.~244]{Pedersen:1980}).  
 
\item If  $\lambda(f,b) = 0$ (this is the hard step), then  this normal map would be normally cobordant to a homotopy equivalence. In other words,  $X \times S^n$ would be homotopy equivalent to a closed topological $2n$-manifold with Euler characteristic zero.
\end{enumerate}
\end{remark}


 We conclude this section with a sample computation of the estimates for elementary abelian $p$-groups.
\begin{example}\label{ex:threetwelve}  If $E_k=(\bZ/p\bZ)^k$ then we can use the Kunneth formula to compute these invariants.
The term $\mu_n(E_k)$ has a polynomial of degree $n$ as its leading term.
For $n=2, 3, 4$ we have
$$\frac{k^2-3k+4}{2}\le q_4(E_k)\le k^2-k+2 $$
$$\frac{k^3-3k^2+8k-12}{6}\le q_6(E_k)\le \frac{k^3+5k-6}{3}$$
$$\frac{k^4-2k^3+11k^2-34k+48}{24}
\le q_8(E_k)\le \frac{k^4+2k^3+11k^2-14k+24}{12}$$
\medskip\noindent
For instance, for $k=2$ this only gives the rough estimate $1\leq q_8(E_2) \leq 6$, but we know that $q_8(E_2) =2$ by performing surgery\footnote{Here $L^7(\bZ/p\bZ)$ denotes the $7$-dimensional lens space with fundamental group $G= \cy p$, and the surgery is performed on the $S^1$ factor by removing $D^6 \times S^1$ and gluing in $S^5 \times D^2$.} on $L^7(\bZ/p\bZ) \times S^1$. However, for $k=3$ the lower bound gives $q_8(E_3) \geq 3$, and hence $E_3$ is not the fundamental group of a rational homology $8$-sphere. 
\end{example}

\section{The proof of Theorem A$'$
}\label{sec:infinite} 
In this section we establish a lower bound for $q_{2n}(G)$, for $G$ an infinite discrete group of type $F_n$.
With the results of Lemma \ref{lem:twoone}, Corollary \ref{cor:upperone},  and Corollary \ref{cor:upper}, this will complete the proof of Theorem A$'$.

\medskip
The invariants $\mu''_k(G)$ used in the statement  of Theorem A$'$ can also be defined as follows.

\begin{definition}
For $k \geq 2$,
let $\mu''_{k}(G)=(-1)^k \cdot \min\{\chi(\cF)\}$, 
 where $\cF$ varies over all $k$-step resolutions
$$ \cF :  F_k \to F_{k-1} \to \dots \to F_2 \to F_1 \to F_0 \to \bZ\to 0$$
of $\bZ$ by finitely generated free $\ZG$-modules, which arise geometrically as the chain complex of the universal covering for a finite $CW$-complex  of dimension $k$ with fundamental group $G$. 
\end{definition}

The sign $(-1)^k$ is introduced to agree with Swan's conventions. Note the inequalities 
$$\mu'_k(G) \leq \mu_k(G) \leq  \mu''_k(G)$$
relating these invariants to those defined by Swan. We define $\mu''_1(G) = d(G) -1$, where  $d(G)$ denotes the minimal number of generators for $G$.

\begin{remark} By Proposition \ref{prop:threeone}, we have $\mu_k(G) =  \mu''_k(G)$ if $n\geq 3$. Note that $\mu''_2(G) = 1- \Def(G)$.
If  $\mu_2(G) < \mu''_2(G)$ for some finitely presented group $G$, then there would be a counter-example to Wall's D2 problem (but no such examples are known at present). In addition, we do not know if the strict inequality $\mu_1(G) < d(G) -1$ can occur. 
\end{remark}

We now establish the lower bound for infinite groups.
\begin{theorem}\label{thm:infone} Let $G$ be a discrete group of type $F_n$, for $n\geq 2$.
If $Y$ is a closed, $(n-1)$--connected $2n$--manifold with a free, orientation-preserving action of $G$, a finite group, then
for any subgroup $H\subset G$ of finite index
$$\mu_n(H) - \mu''_{n-1}(H) \le (-1)^n [G:H] \chi (Y/G).$$
\end{theorem}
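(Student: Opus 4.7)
The plan is to build a CW model of $X$ whose lower $(n-1)$-skeleton realizes $\mu''_{n-1}(G)$, to extract from the top cells via Poincar\'e duality a free $n$-step resolution of $\bZ$ over $\ZG$, and to apply Swan's minimality of $\mu_n(G)$ to the cellular decomposition of $(-1)^n\chi(X)$. The subgroup case reduces to $H=G$ by passing to the $[G:H]$-fold cover $X_H=Y/H$, which is a closed orientable $2n$-manifold with $\pi_1(X_H)=H$ and $(n-1)$-connected universal cover satisfying $\chi(X_H)=[G:H]\chi(X)$; thus it suffices to prove $\mu_n(G)-\mu''_{n-1}(G)\le (-1)^n\chi(X)$.

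First I would fix a finite $(n-1)$-dimensional CW complex $K$ whose cell counts $c_0,\dots,c_{n-1}$ realize $\mu''_{n-1}(G)$, so that $(-1)^{n-1}\chi(K)=\mu''_{n-1}(G)$. For $n\ge 3$ such a minimal $(n-1)$-CW complex with $\pi_1(K)=G$ exists because $G$ is of type $F_n$; for $n=2$ one takes a wedge of $d(G)$ circles (so $c_0=1$, $c_1=d(G)$), reflecting the ad hoc definition $\mu''_1(G)=d(G)-1$. Because $X\to BG$ is an $n$-equivalence (as $Y$ is $(n-1)$-connected), obstruction theory produces a map $K\to X$ inducing the intended map to $\pi_1(X)=G$, and attaching cells in dimensions $n,\dots,2n$ to this map yields a finite $2n$-dimensional CW complex $X'\simeq X$ with $(n-1)$-skeleton $K$; let $c_n,\dots,c_{2n}$ denote the remaining cell counts.

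Splitting the alternating cell sum at degree $n$ and substituting $\sum_{i=0}^{n-1}(-1)^{n-i}c_i=(-1)^n\chi(K)=-\mu''_{n-1}(G)$ gives
\[
(-1)^n\chi(X)=(-1)^n\chi(X')=-\mu''_{n-1}(G)+c_n+\tau,\qquad \tau:=\sum_{i=n+1}^{2n}(-1)^{n-i}c_i.
\]
Since $X'$ is a Poincar\'e duality complex of formal dimension $2n$, the dual cochain complex $D_*:=C^{2n-*}(\widetilde{X'})$ is $\ZG$-chain homotopy equivalent to $C_*(\widetilde{X'})$, so $D_i$ is free of rank $c_{2n-i}$ over $\ZG$ and $H_i(D_*)\cong H_i(Y)$. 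Its bottom $n$-truncation
\[
\scF':=\bigl[D_n\to D_{n-1}\to \cdots\to D_0\to\bZ\to 0\bigr]
\]
is therefore a finitely generated $n$-step free resolution of $\bZ$ over $\ZG$ with $(-1)^n\chi(\scF')=c_n+\tau$, so Swan's defining minimality yields $\mu_n(G)\le c_n+\tau$. Combining with the displayed identity concludes $(-1)^n\chi(X)\ge\mu_n(G)-\mu''_{n-1}(G)$.

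The main obstacle is producing $X'\simeq X$ with the prescribed $(n-1)$-skeleton $K$: this combines the fact that closed topological $2n$-manifolds are homotopy equivalent to finite $2n$-dimensional CW complexes (via West's theorem together with the vanishing of Wall's finiteness obstruction for closed manifolds) with standard CW approximation to realize the prescribed lower skeleton. Because only the homotopy type of $X$ is used, $G$ need not be good even when $n=2$.
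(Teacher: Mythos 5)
Your proposal is correct and follows essentially the same route as the paper: reduce to $H=G$ by passing to the cover, normalize the $(n-1)$-skeleton so that its cell counts realize $\mu''_{n-1}(G)$ (the paper invokes Wall's normal form for this), and then use Poincar\'e duality with $\La$-coefficients to turn the top half of the cell structure into an $n$-step free resolution $C_n^* \to C_{n+1}^* \to \cdots \to C_{2n}^* \to \bZ \to 0$, to which Swan's minimality of $\mu_n(G)$ applies. Your "dual cochain complex truncation" is exactly the resolution the paper obtains by splicing the cohomology exact sequences of the skeletal triples, so the two arguments coincide.
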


\begin{proof}  It suffices to prove this inequality for $H = G$, and then apply covering space theory. Let $M$ denote a closed, orientable $2n$-dimensional manifold with $n \geq 2$ and  fundamental
group $G$ of type $F_n$, such that $\pi_i(M) = 0 $ for $1 < i <n$. Let $K \simeq M$ be a finite $CW$-complex homotopy equivalent to $M$, and let $C:= C(K; \La) = C(\wK)$  denote the chain complex of its universal covering. It is a finite chain complex, with each $C_i$ a  finitely generated free $\ZG$-module. We note that the homology of $M$ is computed from the chain complex $C \otimes_{\ZG}\bZ$, and therefore $\chi(M) = \sum_{i=0}^{2n} (-1)^i c_i$, where $c_i := \rk_{\ZG} C_i$.

We may assume that the $(n-1)$-skeleton $K^{(n-1)}\subset K$ has $(-1)^{n-1}\chi(\wK^{(n-1)}) = \mu''_{n-1}(G)$, by applying Wall's construction of a \emph{normal form} to replace $K$ by a homotopy equivalent complex if necessary (see \cite[p.~238]{wall-pc1}).

The long exact sequences of the triples $(K, K^{(i)}, K^{(i-1)})$, for cohomology with $\ZG$-coefficients gives:
$$ 0 \to H^{i}(K, K^{(i-1)}) \to H^{i}(K^{(i)},  K^{(i-1)}) \to H^{i +1}(K, K^{(i)}) \to H^{i +1}(K, K^{(i-1)}) \to 0$$
If we let $Z^i := \ker \delta^{i}$ and $B^i := \Image \delta^{i-1}$ (for later use) in the cochain complex $(C^*, \delta^*)$, where $C^i = \Hom_{\La}(C_i, \La)$, then the sequence above becomes
$$0 \to Z^i \to C_i^* \to Z^{i+1} \to H^{i+1}(C) \to 0.$$
Since $H^i(C) = H_{2n-i}(C) = 0$, for $n+1 \leq i \leq 2n-1$, and $H^{2n}(C) = \bZ$, we can splice the short exact sequences
$$ 0 \to  Z^i \to C_i^* \to Z^{i+1} \to 0$$
for $n \leq i \leq 2n-1$, and obtain a long exact sequence
$$ 0 \to Z^n \to C_n^* \to C^*_{n+1} \to \dots \to C^*_{2n-1} \to C^*_{2n} \to \bZ \to 0$$
Since this a resolution of $\bZ$ by finitely generated $\ZG$-modules, with $\rk_{\ZG} (C_i^*) := c_i$, we have 
$$(-1)^n\chi_{upper}(C) := (-1)^n\sum_{i=n}^{2n} (-1)^{i} c_i \geq \mu_{n}(G).$$
On the other hand, by the normal form construction, we have 
$$(-1)^n\chi_{lower}(C) :
= (-1)^n\sum_{i=0}^{n-1} (-1)^i c_i = (-1)^n(-1)^{n-1} \mu''_{n-1}(G) = -\mu''_{n-1}(G).$$
Therefore $q_{2n}(G) \geq (-1) ^n\chi(M) \geq \mu_{n}(G) -\mu''_{n-1}(G)$, as required.
\end{proof}

\begin{remark}
Note that $\mu'_1(G) \leq \mu_1(G) \leq d(G) -1$ by Swan \cite[Proposition 1]{Swan:1965}, so this is slightly different than the estimate in Theorem A for finite groups if $n = 2$. 
\end{remark}

\section{Rational homology 4--spheres}\label{sec:four}
We now specialize our results to the case when $M$ is a rational homology $4$--sphere with finite
fundamental group $G$. We would like to find restrictions on $G$, by computing  $\mu_2(G)-\mu_1(G)$. Note that 
$\mu_1(G)=\mu_1'(G)$ and $\mu_2(G)=\mu_2'(G)$ and that for
any solvable finite group, $\mu_1(G)= d(G)-1$ \cite[Proposition 1]{Cossey:1974}. 							
Let $A$ denote a finite abelian group minimally generated by $d$ elements, then using Theorem \ref{thm:invariant}
we have the estimate
$$\mu_2(A)-\mu_1(A)=\frac{d^2-3d+4}{2}\le \chi (M)=2$$ and so we recover the  estimate proved in \cite[3.4]{Teichner:1988}:

\begin{corollary}\label{cor:twoseven}
If $G$ is a finite abelian group minimally generated by $k>3$ elements, then it cannot be realized 
as the fundamental group of
a closed 4--manifold which is a rational homology sphere.
\end{corollary}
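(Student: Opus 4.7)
The plan is to argue by contradiction: suppose $A$ is a finite abelian group with $d := d(A) > 3$ arising as $\pi_1(M)$ for a rational homology $4$-sphere $M$, so $\chi(M) = 2$. Applying Theorem \ref{thm:invariant} with $H = G = A$ and $n = 2$ yields
$$\mu'_2(A) - \mu'_1(A) \le \chi(M) = 2.$$
Since the pair $(A, 2)$ is never exceptional (exceptional pairs have $k$ odd by Remark \ref{rem:excepttwo}), $\mu'_i(A) = \mu_i(A)$ for $i = 1, 2$, so it suffices to establish $\mu_2(A) - \mu_1(A) \ge \tfrac{d^2 - 3d + 4}{2}$, which exceeds $2$ as soon as $d \ge 4$.

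For $\mu_1$, solvability of $A$ gives $\mu_1(A) = d - 1$ by the formula cited in the paragraph preceding the corollary. For $\mu_2(A)$, I would apply Swan's characterization of $\mu_n(G)$ as the least integer at least $(\dim M)^{-1}\bigl(\dim H^2(G;M) - \dim H^1(G;M) + \dim H^0(G;M)\bigr)$ over simple $\bF_p G$-modules $M$, specialized to the trivial module $M = \bF_p$ for a prime $p$ chosen so that the $p$-Sylow subgroup $A_p$ satisfies $d(A_p) = d$ (such a prime exists because $d(A) = \max_p d(A_p)$ for $A$ abelian). Since $A \cong A_p \times A_{p'}$ with $|A_{p'}|$ coprime to $p$, we have $H^*(A; \bF_p) \cong H^*(A_p; \bF_p)$, and the K\"unneth formula applied to $A_p \cong \prod_{j=1}^d \bZ/p^{a_j}\bZ$ gives $\dim H^0 = 1$, $\dim H^1 = d$, and $\dim H^2 = d(d+1)/2$. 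Substituting yields $\mu_2(A) \ge \tfrac{d(d+1)}{2} - d + 1 = \tfrac{d^2 - d + 2}{2}$.

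Subtracting gives $\mu_2(A) - \mu_1(A) \ge \tfrac{d^2 - 3d + 4}{2}$, and combining with the upper bound $\le 2$ forces $d(d - 3) \le 0$, hence $d \le 3$, contradicting $d > 3$. The only mildly delicate point is the K\"unneth computation of $\dim H^2(A_p; \bF_p)$ at $p = 2$, where $\bZ/2$ factors contribute via $\bF_2[x]$ rather than $\Lambda(x) \otimes \bF_2[y]$; one must check that mixing these two ring structures (across partitions of $A_p$ into summands of type $\bZ/2$ and type $\bZ/2^a$ with $a \ge 2$) still gives the uniform value $d(d+1)/2$ after counting squares, cross-products, and Bockstein generators. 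This bookkeeping is the only real obstacle; every other ingredient is already established in the preceding sections.
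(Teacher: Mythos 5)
Your proposal is correct and follows essentially the same route as the paper: apply Theorem \ref{thm:invariant} with $n=2$ to get $\mu_2(A)-\mu_1(A)\le \chi(M)=2$, use $\mu_1(A)=d-1$ for solvable groups, and bound $\mu_2(A)$ from below via the trivial module $\bF_p$ at a prime where the $p$-Sylow subgroup requires all $d$ generators. The paper simply asserts the value $\tfrac{d^2-3d+4}{2}$ without the K\"unneth bookkeeping you supply (which does come out uniformly to $\dim H^2 = d(d+1)/2$ at every prime, including $p=2$), so your write-up is a fleshed-out version of the same argument.
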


Our next objective will be to consider examples where twisted coefficients can be used to establish conditions
for non--abelian groups. Recall the result due to Swan \cite[Theorem 1.2 and Proposition 6.1]{Swan:1965}: for any finite group $G$, $\mu_n(G)$
is the smallest integer which is an upper bound on
$$(\dim M)^{-1}\left (  h^n(G,M) - h^{n-1}(G,M) + \dots
+ (-1)^n h^0(G,M)\right )$$
where $K$ is a field, $M$ is a $KG$-module,  and $h^n(G;M) := \dim_KH^n(G; M)$. Moreover we can
assume that $K$ is algebraically closed and has characteristic $p$ dividing $|G|$, and it suffices to verify the upper bound on absolutely irreducible modules. 

We now focus on an interesting class of non--abelian groups for which the absolutely irreducible modules are easy to determine. The following proposition follows from
elementary representation theory (see \cite[Corollary 6.2.2]{Webb:2016})

\begin{proposition}
Let $K$ be a field of characteristic $p$, $G$ a finite group with maximal normal $p$--group denoted by $O_p(G)$.
Then the simple $KG$--modules are precisely the simple $K[G/O_p(G)]$--modules, made into $KG$--modules
via the quotient homomorphism $G\to G/O_p(G)$. 
\end{proposition}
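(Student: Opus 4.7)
The plan is to reduce the classification of simple $KG$--modules to those of the quotient $G/O_p(G)$ by showing that the normal subgroup $N := O_p(G)$ acts trivially on every simple $KG$--module. Once this is established, inflation along the quotient map $G \to G/N$ provides the desired bijection between simple $K[G/N]$--modules and simple $KG$--modules.

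The key input is the classical fact that if $P$ is a finite $p$--group acting $K$--linearly on a nonzero finite--dimensional $K$--vector space $V$ with $\mathrm{char}(K) = p$, then $V^P \neq 0$. This can be proved either by noting that the only simple $KP$--module in characteristic $p$ is the trivial one (since the augmentation ideal of $KP$ is nilpotent, so $KP$ is local), or by an orbit--counting argument over a finite extension of $K$. Since $G$ is finite, $KG$ is a finite--dimensional $K$--algebra, so every simple $KG$--module $M$ is finite--dimensional. Applying the fact to $P = N$ acting on $M$, we conclude that $M^N \neq 0$.

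The first main step of the proof is then: because $N$ is normal in $G$, the subspace $M^N \subseteq M$ is $KG$--stable, and simplicity of $M$ forces $M^N = M$, i.e.~$N$ acts trivially on $M$. Hence $M$ is naturally a $K[G/N]$--module via the quotient homomorphism, and its $KG$--submodules coincide with its $K[G/N]$--submodules, so it remains simple after this identification.

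The second main step is the reverse direction: any simple $K[G/N]$--module $M'$ inflates to a $KG$--module on which $N$ acts trivially, and again $KG$--submodules coincide with $K[G/N]$--submodules, so simplicity is preserved. Together the two steps give mutually inverse bijections, proving the proposition. There is no real obstacle here: the only point requiring care is justifying that $M^N \neq 0$, but this is immediate from the elementary $p$--group fixed--point lemma and the finite--dimensionality of simple $KG$--modules.
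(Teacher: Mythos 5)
Your proof is correct and is exactly the standard argument: the paper itself gives no proof but simply cites Webb's \emph{A Course in Finite Group Representation Theory}, Corollary 6.2.2, whose proof is precisely your reduction via the fixed-point lemma $M^{O_p(G)} \neq 0$ for a $p$--group acting in characteristic $p$, normality forcing $M^{O_p(G)} = M$, and inflation giving the inverse bijection. Nothing is missing.
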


\begin{corollary}
Let $U_k=E_k\times_TC$ denote a semi-direct product, where $E_k\cong (\bZ/p\bZ)^k$ with 
$k>1$ and $C$ is cyclic of order
relatively prime to $p$. Then for any algebraically closed field $\bK_p$ of characteristic $p$, 
the absolutely irreducible $\bK_pU_k$--modules are
one dimensional characters $\alpha\colon C \to \bK_p^{\times}$ on which $E_k$ acts trivially. 
\end{corollary}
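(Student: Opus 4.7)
The plan is to verify the hypotheses of the preceding proposition for the group $U_k$ and then invoke elementary character theory of cyclic groups of order prime to $p$.

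First, I would identify the maximal normal $p$-subgroup $O_p(U_k)$. By construction, $E_k \cong (\bZ/p\bZ)^k$ is a normal $p$-subgroup of $U_k = E_k \times_T C$, so $E_k \subseteq O_p(U_k)$. Conversely, any $p$-subgroup $P \subseteq U_k$ maps to a $p$-subgroup of the quotient $U_k/E_k \cong C$; but $|C|$ is prime to $p$, so this image is trivial, forcing $P \subseteq E_k$. Hence $O_p(U_k) = E_k$.

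Next, I would apply the preceding proposition to conclude that the simple $\bK_p U_k$-modules are precisely the simple $\bK_p[U_k/E_k] = \bK_p C$-modules, pulled back via the quotient homomorphism $U_k \to C$. By construction, $E_k$ acts trivially on any such module.

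Finally, I would analyze the simple $\bK_p C$-modules. Since $C$ is cyclic and $|C|$ is coprime to $p$, Maschke's theorem implies that $\bK_p C$ is semisimple, and since $C$ is abelian with $\bK_p$ algebraically closed, every absolutely irreducible $\bK_p C$-module is one-dimensional, i.e., a character $\alpha \colon C \to \bK_p^\times$. Combining these observations yields the corollary. There is no real obstacle here; the argument is a direct application of the preceding proposition together with standard representation theory of cyclic groups in coprime characteristic, the only point requiring a short verification being the identification $O_p(U_k) = E_k$.
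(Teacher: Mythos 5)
Your proposal is correct and follows exactly the route the paper intends: the corollary is stated as an immediate consequence of the preceding proposition, and your verification that $O_p(U_k)=E_k$ together with the observation that simple modules for the abelian group $C$ over an algebraically closed field are one-dimensional characters is precisely the intended (and omitted) argument. The appeal to Maschke's theorem is harmless but not needed, since Schur's lemma alone forces one-dimensionality for an abelian group over an algebraically closed field.
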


The cyclic group $C$ acts on the vector spaces $H^i(E_k; \bK_p)$ via one-dimensional characters
$\alpha\colon C \to \bK_p^{\times}$. 
Using the multiplicative structure in cohomology and the Bockstein, this is determined by $N_k=H^1(E_k;\bK_p)\cong \Hom (E_k, \bK_p)$ as an $\bK_p[C]$--module.
 
 Recall that by
\cite[Corollary II.4.3, Theorem II.4.4]{Adem:2004}, the mod $p$ cohomology ring of $E_k$ is given by
$$H^*(E_k, \bF_p)\cong
\begin{cases}
\bF_2 [x_1, \dots , x_k] & \text{for $p=2$}\\
\Lambda (x_1, \dots, x_k)\otimes \bF_p [y_1, \dots, y_k]  & \text{for $p$ odd}
\end{cases}
$$
where $x_1, \dots , x_k\in H^1(E_k, \bF_p)$, $y_1,\dots ,y_k\in H^2(E_k, \bF_p)$ and 
$\Lambda (x_1, \dots, x_k)$ denotes the exterior algebra on these one-dimensional generators. Moreover
if we let $B\colon H^1(E_k, \bF_p)\to H^2(E_k, \bF_p)$ denote the Bockstein, then we can assume that for 
$p$ odd $B(x_i)= y_i$, whereas for $p=2$, $B(x_i)=x_i^2$ for all $i=1, \dots , k$. By extending coefficients we obtain the same structure for $H^*(E_k, \bK_p)$.

The map $B$ is compatible with respect to the $C$ action and defines an isomorphism onto its image,
thus giving rise to
an exact sequence
$$0\to N_k\to H^2(E_k;\bK_p)\to \Lambda^2(N_k)\to 0$$ 
as 
$\bK_pC$--modules. 
If $N_k\cong \bigoplus_{1\le i\le k} L(\alpha_i)$ then 
$\Lambda^2(N_k)\cong \bigoplus_{1\le i<j\le k} L(\alpha_i\alpha_j)$. Note that if we tensor the sequence
with any other character and take $C$--invariants it will still be exact, as $(|C|,p)=1$.  

Using the fact that for any $\bK_pU_k$--module $M$, $H^t(U_k;  M) \cong H^t(E_k;  M_{|_E})^C$ for every $t\ge 0$, for any character
$L(\beta)$ we obtain the formula
$$ h^2(U_k;L(\beta)) - h^1(U_k;L(\beta)) + h^0(U_k;L(\beta)) =
\dim [\Lambda^2(N_k)\otimes\beta]^C + \dim L(\beta)^C.$$ 
At primes $q$ dividing $ |C|$,  we work over the field $\bK_q$, and note that $H^i(U_k, L)= H^i(C, L^{E_k})$. Hence an absolutely irreducible $L$ with some
$h^i(U_k,L)\ne 0$ must also have a trivial action of $E_k$. Arguing as before, $L$ is the inflation of a character
$C/O_q(C)\to \bK_q^{\times}$. Thus we have
$H^i(U_k, L)= [H^i(O_q(C), \bK_q)\otimes L]^{C/O_q(C)}$. As $O_q(C)$ is cyclic, all these terms are isomorphic and of 
non-zero rank (equal to one) if and only if the action of $C/O_q(C)$ is trivial and we obtain that
$$h^2(U_k;L) - h^1(U_k;L) + h^0(U_k;L)\le 1.$$
We apply our analysis to obtain a calculation for $\mu_2(U_k)$ and $e_2(U_k)$: 

\begin{proposition}\label{prop:bound}
For $U_k = E_k\times_T C$ as above, with $N_k=H^1(E_k;  \bK_p)$,  
$$\mu_2(U_k)=\max \{\dim [\Lambda^2(N_k)\otimes L(\beta)]^C + \dim L(\beta)^C\}$$
as $L(\beta)$ ranges over all characters $\beta\colon C \to \bK_p^{\times}$, and
$$e_2(U_k) = \dim \Lambda^2(N_k)^C  - \dim N_k^C + 2.$$
\end{proposition}

We apply this to the special case when $p$ is an odd prime and the action on $N_k=H^1(E_k, \bK_p)$ is \textsl{isotypic}, i.e. it 
is the direct sum of copies of a fixed character $L(\alpha)$. 

\begin{corollary}\label{cor:bound}
Let $U_k = E_k\times_T C$ where $p$ is odd and the action of C on the vector space $E_k$ gives rise to
the sum of $k$ copies
of a fixed character $L(\alpha)$ over the splitting field $\bK_p$, with $k>1$.
\begin{enumerate}
\setlength\itemsep{8pt}
\item
If $\alpha^2\ne 1$, $e_2(U_k)=2$, $\mu_2(U_k) = \frac{k(k-1)}{2}$ and $\mu_2(U_k)-\mu_1(U_k) = \frac{k(k-3)}{2}.$
\item
If $\alpha^2=1$, $e_2(U_k) = \frac{k(k-1)}{2} + 2$, $\mu_2(U_k) = \frac{k(k-1)}{2} + 1$ and $\mu_2(U_k) - \mu_1(U_k) = \frac{k(k-3)}{2} + 1.$ 
\end{enumerate}
\end{corollary}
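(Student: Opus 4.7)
The plan is to apply Proposition~\ref{prop:bound} directly, exploiting the isotypic structure of $N_k \cong L(\alpha)^{\oplus k}$ as a $\bK_pC$-module. The key algebraic input is that pairing basis vectors yields $\Lambda^2(N_k) \cong L(\alpha^2)^{\oplus k(k-1)/2}$, and hence $\Lambda^2(N_k) \otimes L(\beta) \cong L(\alpha^2\beta)^{\oplus k(k-1)/2}$ for any character $\beta\colon C \to \bK_p^\times$. Under this identification, every term appearing in the formulas of Proposition~\ref{prop:bound} becomes a simple count of $C$-invariants in an isotypic sum of one-dimensional characters.

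For $e_2(U_k) = \dim \Lambda^2(N_k)^C - \dim N_k^C + 2$ the analysis splits into the two cases. In case (i), the hypothesis $\alpha^2 \neq 1$ also forces $\alpha \neq 1$, so both invariant subspaces vanish and $e_2 = 2$. In case (ii), since the action is assumed nontrivial but $\alpha^2 = 1$, the character $\alpha$ must be the unique order-two character; then $N_k^C = 0$ while $\Lambda^2(N_k)$ is $C$-trivial, so $e_2 = k(k-1)/2 + 2$.

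For $\mu_2(U_k)$ I would take the maximum in Proposition~\ref{prop:bound} over $\beta$. The term $\dim [\Lambda^2(N_k) \otimes L(\beta)]^C$ equals $k(k-1)/2$ when $\beta = \alpha^{-2}$ and $0$ otherwise, while $\dim L(\beta)^C$ equals $1$ when $\beta = 1$ and $0$ otherwise. In case (i) these two optimal choices of $\beta$ are distinct, and since $k \geq 2$ gives $k(k-1)/2 \geq 1$, the supremum $k(k-1)/2$ is achieved at $\beta = \alpha^{-2}$. In case (ii) the two conditions coincide at $\beta = 1$, yielding $\mu_2 = k(k-1)/2 + 1$. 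For $\mu_1(U_k)$, which is not covered by Proposition~\ref{prop:bound}, I would instead invoke Swan's general description of $\mu_1$ as a supremum of $h^1(U_k; M) - h^0(U_k; M)$ over absolutely irreducible modules. The Lyndon--Hochschild--Serre spectral sequence for $1 \to E_k \to U_k \to C \to 1$, together with the invertibility of $|C|$ in $\bK_p$, gives $H^n(U_k; L(\beta)) \cong H^n(E_k; L(\beta))^C$, so the relevant degree-one term is $[N_k \otimes L(\beta)]^C$, of dimension $k$ exactly when $\beta = \alpha^{-1}$. Since $\alpha \neq 1$ in both cases, the maximiser is $\beta = \alpha^{-1} \neq 1$ and $\mu_1(U_k) = k$.

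Assembling these values yields $\mu_2 - \mu_1 = k(k-1)/2 - k = k(k-3)/2$ in case (i), and $\mu_2 - \mu_1 = k(k-1)/2 + 1 - k = k(k-3)/2 + 1$ in case (ii), as claimed. The subtleties I expect to be the main sources of care are (a) verifying in case (i) that the maximum for $\mu_2$ is genuinely attained at $\beta = \alpha^{-2}$ rather than at $\beta = 1$, which requires $k \geq 2$; and (b) justifying the $\mu_1$ computation, since $\mu_1$ is not addressed by Proposition~\ref{prop:bound} and requires the separate Hochschild--Serre argument above. Once these points are pinned down, every remaining step reduces to evaluating the isotypic decomposition, which is essentially forced by the hypothesis on the action of $C$.
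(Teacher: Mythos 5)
Your proposal is correct and follows essentially the same route as the paper: apply Proposition \ref{prop:bound} with the isotypic decomposition $\Lambda^2(N_k)\cong L(\alpha^2)^{\oplus k(k-1)/2}$, take $\beta=\alpha^{-2}$ for $\mu_2$ (with the extra $+1$ when $\alpha^2=1$), and read off $e_2$ from the invariants. The only divergence is the value $\mu_1(U_k)=k$, which the paper gets from solvability via $\mu_1(G)=d(G)-1$, whereas you derive it directly from Swan's cohomological formula at the module $L(\alpha^{-1})$ — an equally valid and arguably more self-contained justification.
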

\begin{proof} We apply Proposition \ref{prop:bound} to compute
$\mu_2(U_k)$ and $e_2(U_k)$.  
Choose $\beta = \alpha^{-2}$, then $\Lambda^2(H^1(E_k;\bK_p))\otimes L(\beta)$ is a trivial $\bK_p[C]$--module
of dimension equal to $\frac{k(k-1)}{2}$. In the special case $\beta =1$ we obtain the extra term. The calculation for
$e_2(U_k)$ follows from its expression in terms of invariants.
As $U_k$ is solvable we have $\mu_1(U_k) =k$ and the proof is complete. 
\end{proof}

\begin{corollary}\label{cor:q4-uk}
Let $U_k=E_k\times_T C$ where $p$ is odd, $E_k=(\bZ/p\bZ)^k$ and $C$ cyclic of order prime to $p$ acts on each $\bZ/p\bZ$ factor in $E_k$ via $x\mapsto x^q$ where 
$q$ is a unit in $\bZ/p\bZ$. 
\begin{enumerate}
\item If $x^{q^2}\ne x$ for all $1\ne x\in E_k$, then 
$$\max\{2, \frac{k(k-3)}{2}\} \le q_4(U_k) \le k(k-1)$$
\item If $q=p-1$, then
$$\frac{k(k-1)}{2} + 2 \le q_4(U_k)\le k(k-1) + 2$$
\end{enumerate}
\end{corollary}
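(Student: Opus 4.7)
The plan is to reduce Corollary \ref{cor:q4-uk} to a direct application of Theorem A together with the explicit computations of Corollary \ref{cor:bound}. First I would match the two hypotheses of the corollary to the two cases of Corollary \ref{cor:bound}. Writing $\alpha\colon C \to \bK_p^{\times}$ for the character by which $C$ acts on a single factor of $E_k$, the hypothesis of (i) that $x^{q^2}\ne x$ for all $1\ne x \in E_k$ is equivalent to $q^2 \not\equiv 1 \pmod p$, i.e.\ $\alpha^2 \ne 1$, which is case (1) of Corollary \ref{cor:bound}. The hypothesis of (ii), $q = p-1$, forces $\alpha \ne 1$ but $\alpha^2 = 1$, which is case (2). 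In both situations $C$ acts isotypically on $E_k$, so Corollary \ref{cor:bound} applies.

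Second, I would read off the three invariants $e_2(U_k)$, $\mu_2(U_k)$, and $\mu_2(U_k) - \mu_1(U_k)$ from Corollary \ref{cor:bound} in each case. The pair $(U_k,2)$ is not exceptional in the sense of Remark \ref{rem:excepttwo}, since exceptional pairs $(G,k)$ require $k$ odd and at least $3$; hence $\mu'_2(U_k)=\mu_2(U_k)$, and Theorem A specialized to $n=2$ gives
\[
\max\bigl\{e_2(U_k),\ \mu_2(U_k)-\mu_1(U_k)\bigr\} \;\leq\; q_4(U_k) \;\leq\; 2\mu_2(U_k).
\]
Substituting the values from Corollary \ref{cor:bound} immediately yields $\max\{2,k(k-3)/2\}\le q_4(U_k) \le k(k-1)$ in case (i). In case (ii) one observes that $e_2(U_k) = k(k-1)/2 + 2$ exceeds $\mu_2(U_k) - \mu_1(U_k) = k(k-3)/2 + 1$ by $k+1>0$, so the lower bound reduces to $k(k-1)/2 + 2$, while doubling $\mu_2(U_k)$ gives the upper bound $k(k-1) + 2$.

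There is no substantive obstacle: the deep work is already packaged in Theorem A (combining the cohomological lower bound of Section \ref{sec:two} with the thickening construction of Section \ref{sec:three}) and in Corollary \ref{cor:bound} (which rests on Proposition \ref{prop:bound} and the description of absolutely irreducible $\bK_p U_k$-modules via $O_p(G)$). The one point to handle carefully is the translation between the condition $\alpha^2 \ne 1$ on the character on $N_k = H^1(E_k;\bK_p) \cong \Hom(E_k,\bK_p)$ and the condition $q^2 \ne 1 \pmod p$ on the action on $E_k$ itself; since $\alpha$ and the character on $E_k$ are either equal or mutually inverse, the condition $\alpha^2 = 1$ is manifestly the same in either convention.
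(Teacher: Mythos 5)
Your proposal is correct and is exactly the argument the paper intends (the paper states this corollary without proof, as an immediate consequence of Theorem A for $n=2$ combined with the values of $e_2(U_k)$, $\mu_2(U_k)$, and $\mu_2(U_k)-\mu_1(U_k)$ from Corollary \ref{cor:bound}). Your checks that $(U_k,2)$ is non-exceptional and that the condition $\alpha^2\ne 1$ on $N_k=H^1(E_k;\bK_p)$ matches $q^2\not\equiv 1 \pmod p$ are the right points to verify, and the arithmetic in both cases is right.
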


\begin{remark} 
Note that for $\alpha^2\ne1$, $e_2(U_k)=2 < \mu_2(U_k)-\mu_1(U_k)$ for $k\ge 5$. Hence \ref{cor:q4-uk}
improves on the lower
bound given in \cite{Hausmann:1985}. On the other hand, if $\alpha^2 =1$ then $\mu_2(U_k) - \mu_1(U_k) < e_2(U_k)$ for all
$k>1$. This shows that the two invariants play a role in establishing lower bounds for $q_4(G)$. 
\end{remark} 
We now apply our estimate to homology 4--spheres. 
\begin{theorem}\label{thm:better-bound}
Let $U_k=E_k\times_T C$ where $p$ is an odd prime, $E_k=(\bZ/p\bZ)^k$ and $C$ cyclic of order prime to $p$ acts on each $\bZ/p\bZ$ factor in $E_k$ via $x\mapsto x^q$ where 
$q$ is a unit in $\bZ/p\bZ$. 
\begin{enumerate} 
\item If $x^{q^2}\ne x$ for all $1\ne x\in E_k$, then for all $k>4$, $U_k$ does not arise as the
fundamental group of any rational homology 4--sphere.
\item If $q=p-1$, then for all $k>1$, $U_k$ does not arise as the
fundamental group of any rational homology 4--sphere. 
\end{enumerate}
\end{theorem}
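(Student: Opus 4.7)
The plan is to reduce the statement to a numerical consequence of the lower bound machinery already developed in Corollary \ref{cor:q4-uk}. The strategy has two ingredients: first, translate ``fundamental group of a rational homology 4--sphere'' into the bound $q_4(G) \le 2$; second, verify that under each of the two hypotheses, the lower bounds on $q_4(U_k)$ strictly exceed $2$.

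For the first ingredient I would observe that if $M$ is a closed orientable rational homology 4--sphere with $\pi_1(M) = G$, then $\chi(M) = 2$. The universal cover of any connected $4$--manifold is automatically $(n-1)$--connected for $n = 2$, so such an $M$ is admissible in the definition of $q_4(G)$, giving $q_4(G) \le 2$. Contrapositively, any group $G$ with $q_4(G) > 2$ is excluded as a fundamental group of a $\bQ S^4$ manifold.

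For the second ingredient I would simply insert the lower bounds from Corollary \ref{cor:q4-uk}. In case (i), the hypothesis $x^{q^2} \ne x$ for all $1 \ne x \in E_k$ is exactly $q^2 \not\equiv 1 \pmod p$, which in the notation of Corollary \ref{cor:bound} is $\alpha^2 \ne 1$. Hence $q_4(U_k) \ge \max\{2, k(k-3)/2\}$; for $k \ge 5$ one has $k(k-3)/2 \ge 5 > 2$, so $U_k$ is excluded. In case (ii), the choice $q = p - 1$ gives $\alpha^2 = 1$, so $q_4(U_k) \ge k(k-1)/2 + 2$; for $k \ge 2$ this is at least $3 > 2$, again excluding $U_k$.

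The principal difficulty in the overall argument is not Theorem C itself but the calculation of $\mu_2(U_k)$ and $e_2(U_k)$ underlying Corollary \ref{cor:q4-uk}: one identifies the absolutely irreducible $\bK_p U_k$--modules with characters of the quotient $U_k/O_p(U_k) = C$, decomposes $H^*(E_k; \bK_p)$ as a $C$--module via symmetric and exterior powers of $N_k = H^1(E_k; \bK_p)$, and maximizes the relevant Euler characteristic expression over twists $L(\beta)$. Once those representation--theoretic invariants are in hand, Theorem C is the arithmetic check described above.
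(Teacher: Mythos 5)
Your proposal is correct and follows essentially the same route as the paper: the paper likewise notes that a rational homology $4$--sphere with fundamental group $U_k$ would force $q_4(U_k)=2$, and then reads off the contradiction from the lower bounds of Corollary \ref{cor:q4-uk} (namely $\frac{k(k-3)}{2}\le 2$ fails for $k>4$ when $\alpha^2\neq 1$, and $\frac{k(k-1)}{2}+2>2$ for $k>1$ when $q=p-1$). Your identification of the hypotheses with $\alpha^2\neq 1$ and $\alpha^2=1$ respectively, and your remark that the real work lies in the computation of $\mu_2(U_k)$ and $e_2(U_k)$, both match the paper's treatment.
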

\begin{proof}
For the groups $U_k= E_k\times_TC$ where $x^{q^2}\ne x$ for $x\in E_k$, we consider the inequality
$$\max\{2, \frac{k(k-3)}{2}\} \le q_4(U_k) \le k(k-1).$$
If a rational homology 4--sphere $X$ with fundamental group $U_k$ exists, then $q_4(U_k)=2$ and we'd have
$\frac{k(k-3)}{2} \le 2$, which implies that $k\le 4$. Note that the upper bound implies the existence of a rational
homology 4--sphere with fundamental group $U_2$.
In the case when $q=p-1$ our estimate \ref{cor:q4-uk} shows that $2< q_4(U_k)$ for all $k>1$. 
\end{proof}

\begin{example}\label{ex:fivenine}
Let $\bF_q$ denote a field with $q=2^k$ elements. Then the cyclic group of units $C=\bZ/(q-1)\bZ$ acts transitively on
the non--zero elements of the underlying mod 2 vector space $E_k=(\bZ/2\bZ)^k$. If we write
$\bF_q = \bF_2[u]/(p(u))$, where $p(u)$ is an irreducible polynomial of degree $k$ over $\bF_2$, then the action can
be described as multiplication by $u$. Expressing it in terms of the basis $\{1,u,\dots,u^{k-1}\}$ we obtain a faithful representation
$C\to GL(k,\bF_2)$ with characteristic polynomial $p(t)$. This gives 
rise to a semi--direct product $J_k= E_k\times_TC$ where the
action of $C$ on $N_k=H^1(E_k,\bK_2)$ decomposes into non-trivial, distinct characters determined by the roots of
$p(t)$. If $\alpha$ is a root of this
polynomial,  so are all the powers $\{\alpha^{2^i}\}_{i=0,\dots,k-1}$ and these appear as a complete set of eigenvalues for
the action on the $k$--dimensional vector space. In other words we have
$N_k\cong \bigoplus_{0\le i\le k-1} L(\alpha^{2^i})$.
We propose to compute the invariants $\mu_2(J_k)$ and $e_2(J_k)$. 
\begin{proposition}
For the groups $J_k$ described above we have
\begin{enumerate}
\item
$\mu_2(J_2) = 2$, whereas $\mu_2(J_k)=1$ for all $k>2$. 
\item
$e_2(J_2)=3$, whereas $e_2(J_k)=2$ for all $k>2$.
\end{enumerate}
\end{proposition}
\begin{proof} As $N_k\cong \bigoplus_{0\le i\le k-1} L(\alpha^{2^i})$ we have that
$\Lambda^2(N_k)\cong \bigoplus_{0\le i<j\le k-1} L(\alpha^{2^i+2^j})$. 
For $k>2$, this
is a sum of distinct, non--trivial characters. This follows from the fact that for
$k>2$, 
$$2^k-1 = 2^{k-1}+2^{k-2} +\dots + 2+1>2^i+2^j,$$
and each $\alpha^{2^i+2^j}$ is a distinct,
non-trivial $2^k-1$ root of unity.
Hence if $k>2$, the module $\Lambda^2(N_k)$ has 
\textbf{no
trivial summands}, and \textbf{no repeated summands}. 
Now if we take any character $L(\beta)$, we see that at most one summand in 
$\Lambda^2(N_k)\otimes L(\beta)$ can be trivial. And if this occurs then $\beta \ne 1$. Hence 
applying the formula 
in Proposition \ref{prop:bound} we conclude
that $\mu_2(J_k)=1$.
Similarly we see that $e_2(J_k)=2$
for all $k>2$. Also $\Lambda^2(N_2)\cong
L(1)$, whence we see that $\mu_2(J_2)=2$ and $e_2(J_2)=3$.
\end{proof} 

From these examples we conclude that there exist rational homology 4--spheres with fundamental group 
equal to $J_k$ for
$k>2$, and so groups of arbitrarily high rank can occur as such groups, in contrast to the situation for abelian groups appearing in Corollary \ref{cor:twoseven}. 

For $J_2\cong A_4$, the alternating group on four letters, we have $e_2(J_2)=3$, $\mu_2(J_2)=2$ and so 
$3\le q_4( A_4)\le 4$. The cohomological computations also imply that $\mu_4( A_4)=1$, whence there does exist a rational homology $8$--sphere with fundamental group $ A_4$. 
\end{example}
\begin{proposition} For the alternating groups $G = A_4$ or $G=A_5$, we have $q_4(G) = 4$.  
\end{proposition}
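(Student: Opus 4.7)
\medskip

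First, for the upper bound $q_4(G)\le 4$, I apply Corollary~\ref{cor:upper}, which yields $q_4(G)\le 2\mu_2(G)$. For $G=A_4$, Example~\ref{ex:fivenine} records $\mu_2(A_4)=2$ (since $A_4\cong J_2$). For $G=A_5$, I would verify $\mu_2(A_5)=2$ prime by prime via Swan's cohomological formula: at $p=2$ the Sylow subgroup of $A_5$ is $V_4$ with Weyl group $\bZ/3$, exactly as in $A_4$, so $H^*(A_5;\bF_2)\cong H^*(A_4;\bF_2)$ in low degrees and the same bound applies; at $p=3$ and $p=5$ the Sylow subgroups are cyclic with order-$2$ Weyl groups, and a direct check on the alternating sum over one-dimensional absolutely irreducible modules keeps the bound at most $2$.

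The preliminary lower bound $q_4(G)\ge 3$ comes from Theorem~A together with $e_2(A_4)=e_2(A_5)=3$, obtained from $(\dim H^0,\dim H^1,\dim H^2)(G;\bF_2)=(1,0,1)$ in both cases. This gives $3\le q_4(G)\le 4$; the remaining task is to exclude $\chi(M)=3$.

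Suppose $M$ is a closed oriented topological $4$-manifold with $\pi_1(M)=G$ and $\chi(M)=3$. Then $b_1(M)=0$ and $b_2(M)=1$, and the intersection form on $H^2(M;\bZ)/\text{torsion}\cong\bZ$ is $\langle\pm 1\rangle$, so $\sigma(M)=\pm 1$ and Wu's formula forces $w_2(M)\ne 0$ (in particular $M$ is non-spin). I would then invoke a $\CP^2$-splitting principle from topological surgery theory (applicable to good finite fundamental groups such as $A_4$ and $A_5$) to deduce $M\cong M'\#\CP^2$ or $M'\#\overline{\CP^2}$, with $\pi_1(M')=G$ and $b_2(M')=0$. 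Then $M'$ would be a rational homology $4$-sphere with fundamental group $G$, contradicting the lower bound $q_4(G)\ge 3>2$ already established.

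The main obstacle is the $\CP^2$-splitting step: the generator of $H_2(M;\bZ)/\text{torsion}$ may have nonzero image in the Schur multiplier $H_2(G;\bZ)=\bZ/2$ (which is nonzero for both $A_4$ and $A_5$), in which case it is not spherical and a direct splitting fails. I expect to circumvent this either via the Hambleton--Kreck classification of closed $4$-manifolds with finite fundamental group and small $b_2$ from \cite{hk2}, or by analyzing the equivariant intersection form on $\pi_2(M)=H_2(\widetilde M;\bZ)$ via Proposition~\ref{prop:stable} to derive an $L$-theoretic obstruction to the existence of the required $\bZ G$-Hermitian form of rank $3|G|-2$ and signature $\pm|G|$.
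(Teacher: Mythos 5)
The upper bound $q_4(G)\le 4$ and the preliminary lower bound $q_4(G)\ge e_2(G)=3$ are fine and agree with the paper. The gap is in the crucial step, excluding $\chi(M)=3$, which you do not actually carry out. Your $\CP^2$-splitting strategy founders on exactly the obstruction you name: since $H_2(G;\bZ)=\bZ/2$ for both $A_4$ and $A_5$, the generator of $H_2(M;\bZ)/\mathrm{torsion}$ need not be spherical, and there is no general topological splitting theorem that peels off a $\CP^2$ summand from a non-simply-connected $4$-manifold just because the integral intersection form is $\langle\pm1\rangle$. Neither of your proposed workarounds (invoking the Hambleton--Kreck classification, or extracting an unspecified $L$-theoretic obstruction) is executed, so as written the proof is incomplete at its only nontrivial point.

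For comparison, the paper's argument is purely algebraic and short. One first checks $H_4(G;\bZ)=0$ (via the mod $2$ cohomology, the odd Sylow subgroups being cyclic), so the class $\sigma\in\wH^{-5}(G;\bZ)$ vanishes and Proposition \ref{prop:exponent} gives a stable isomorphism $\pi_2(M)\cong J\oplus J^*$ with $J$ representing $\Omega^3(\bZ)$. If $\chi(M)=3$ then $H^0(G;\pi_2(M))\cong\bZ$, and the exact sequence defining Tate cohomology gives a surjection
$$\bZ\cong H^0(G;\pi_2(M))\twoheadrightarrow\wH^0(G;\pi_2(M))\cong\wH^{-3}(G;\bZ)\oplus\wH^{3}(G;\bZ)\cong H_2(G;\bZ)\oplus H_2(G;\bZ)\cong(\cy2)^2,$$
which is impossible since $(\cy2)^2$ is not cyclic. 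You might note that this uses the same input you identified (the nonvanishing Schur multiplier $\cy 2$), but turns it into a contradiction directly rather than as an obstruction to be circumvented. If you want to salvage your own outline, the second of your two suggestions is the right direction, but you would need to make the analysis of $\pi_2(M)$ concrete along these lines.
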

\begin{proof} By the estimates above,  for $G= A_4$ we only need to rule out $q_4(G) = 3$, so suppose that there exists $M^4$ with $\pi_1(M) = A_4$ and $\chi(M) = 3$. Applying the universal coefficient theorem, we can use the computation at $p=2$ 
(see \cite[Theorem 1.3, Chapter III]{Adem:2004}) to show that $H_4(A_4;\bZ) = 0$.
Hence $\wH^{-5}(A_4,\bZ)=0$ and applying 
Proposition \ref{prop:exponent} we infer that  
$ \pi_2(M)$ is stably isomorphic to $J \oplus J^*$, where $J$ denotes a minimal representative of $\Omega^3(\bZ)$.  Since $\chi(M) = 3$, we have $H^0(G; \pi_2(M)) = \bZ$. From the exact sequence for Tate cohomology
 \cite[Chap.~IV.4]{Brown:1982},
we have a surjection
$\bZ = H^0(G;\pi_2(M)) \twoheadrightarrow \wH^0(G;\pi_2(M))$.
However,
$$\wH^0(G;\pi_2(M)) =  \wH^0(G;J \oplus J^*) = \wH^{-3}(G;\bZ)\oplus \wH^{3}(G;\bZ) \cong H_2(G;\bZ) \oplus  H_2(G;\bZ)$$
and since $H_2(G;\bZ) = \cy 2$, this is impossible. 

For $G= A_5$ we apply the fact that for every non-periodic finite subgroup $G$ of $SO(3)$, $\mu_2(G)=2$  (see Remark \ref{subgroups-so3}). The rest of the argument is analogous to that for $A_4$, since the restriction map $H^*(A_5;\bZ/2\bZ) \to H^*(A_4; \bZ/2\bZ)$ is an isomorphism. This is true
because both groups share the same $2$--Sylow subgroup $(\bZ/2\bZ)^2$, with normalizer $A_4$ (see \cite[Theorem 6.8, Chapter II]{Adem:2004}). This implies  that $e_2(A_5)=3$, $H_4(A_5;\bZ) = 0$ and $H_2(A_5;\bZ) = \cy 2$ (note that the
other two $p$--Sylow subgroups are cyclic, so don't contribute to even degree homology). Therefore we can rule
out $q_4(A_5)=3$ whence $q_4(A_5)=4$.
\end{proof}

\section{Some further remarks and questions in dimension four}\label{sec:five}

In this section we will briefly discuss some  questions about rational homology $4$-spheres whose  fundamental groups are finite.

\medskip
\noindent\textbf{\S 6A. Existence via Surgery.} The main open problem is to characterize the finite groups $G$ for which $q_4(G)=2$. To make progress,  we need more constructions of rational homology $4$-spheres.

Examples of $\bQ S^4$-manifolds can be constructed by starting
 with a rational homology $3$-sphere $X$, forming the product $X \times S^1$, and then doing surgery on
 an embedded $S^1 \times D^3 \subset X \times S^1$ representing a generator of $\pi_1(S^1) = \bZ$. This construction is equivalent to the ``thickened double" construction $Z = M(K)$ for a finite $2$-complex of  Proposition \ref{prop:existence} (compare \cite[\S 4]{hk2}).
 
 \begin{example} The groups $G=\cy p \times \cy p$ are $\bQ S^4$-groups, since we can do surgery on an embedded circle $L^3(p,1) \times S^1$ representing $p$-times a generator of $\pi_1(S^1) = \bZ$.   These examples are \emph{not } of the thickened double form $M(K)$ because the minimal rank of $\pi_2(K)$ representing $\Omega^3(\bZ)$  is greater than $|G|-1$, and hence  the extension describing $\pi_2(M)$ is non-trivial (by Proposition \ref{prop:extclass}).
 \end{example}

 Since the quotient of a free finite group action on a rational homology $3$-sphere is again a rational homology $3$-sphere, 
 one could use the examples $X=Y/G$ studied by \cite{Adem:2019}, where $Y$ is a $\bQ S^3$ and $G$ is a finite group acting
 freely on $Y$. However, to obtain a $\bQ S^4$ with finite fundamental group by this construction, $Y$ must itself have finite 
 fundamental group.

\begin{remark}
The finite fundamental groups of closed, oriented $3$-manifolds have periodic cohomology of period 4, but not all $4$-periodic groups arise this way. 
A complete list of  $4$-periodic groups is given in Milnor \cite[\S 3]{milnor2}, and those which can act freely and orthogonally on $S^3$ were listed by Hopf  \cite{Hopf:1926}. Perelman \cite{Lott:2007} showed that the remaining groups in Milnor's list do not arise as the fundamental group of any closed, oriented $3$-manifold, and that the closed $3$-manifolds with finite fundamental group are exactly the $3$-dimensional spherical space forms.
\end{remark}

\begin{remark}\label{subgroups-so3} For every non-periodic finite subgroup $G$ of $SO(3)$, 
 we have $\mu_2(G)=2$ and hence $q_4(G) \leq 4$ (see \cite[Proposition 2.4]{hk4}). Note that each such subgroup has a $2$-fold central  extension $G^* \subset SU(2)$  which acts freely on $S^3$, and let $X =S^3/G^*$ denote the quotient $3$-manifold.  On $N:=X \times S^1$, we can do surgery on disjoint circles representing (i) a generator of the central subgroup of $G^*$ and  (ii) a generator of $\bZ$,  to reduce the fundamental group from $\pi_1(N) = G^* \times \bZ$ to $G$. We thus obtain a $4$-manifold $M$ with $\chi(M)=4$ and $\pi_1(M) = G$, realizing the upper bound for $q_4(G)$. Our estimates give $2 \leq q_4(G) \leq 4$ for the cases not yet determined,  namely where  $G$ is dihedral of order $4n$ or $G$ is the symmetric group  $S_4$.
\end{remark}

\begin{remark} Teichner \cite[3.7]{Teichner:1988}  indicated that topological surgery could produce examples with finite fundamental group from certain $4$-manifolds with infinite fundamental group.  This technique should be investigated further.
 \end{remark}

\noindent\textbf{\S 6B. Groups of Deficiency Zero.} There are many finite groups with  deficiency zero:  for  example,
Wamsley \cite{Wamsley:1970} 
showed that a metacyclic group $G$ with $H_2(G;\bZ) =0$ has $\Def(G) = 0$. In particular, the class of finite
groups arising as fundamental groups of rational homology 4--spheres
includes groups with periodic cohomology of arbitrarily high period. There is an extensive literature on this problem: for example, see \cite{Cossey:1974,Campbell:1980,Campbell:1980a,Epstein:1961,Johnson:1970,Mennicke:1989,Neumann:1989,Sag:1973,Sag:1973a,Searby:1972}.

According to Swan, $1\leq \mu_2(G) \leq 1 - \Def(G)$ 
(see \cite[Proposition 1, Corollary 1.3]{Swan:1965}), hence if $G$ is a  finite group of deficiency zero, we have
$\mu_2(G)=1$. Thus for such groups by Proposition \ref{prop:existence}, we can construct an orientable 4--manifold $M$ with $\pi_1(M)=G$ and
$\chi(M)=2$.  More generally, this can be done whenever $\mu_2(G) =1$ by Theorem B (see Corollary \ref{cor:upper} and the series of groups $J_k$ considered in Example \ref{ex:fivenine}).  
Then $M$ is a rational homology 4--sphere, and in these cases there is a  minimal representative $J$ for  the stable module $\Omega^3(\bZ)$ with 
$\rk_{\bZ}(J) = |G|-1$. (compare \cite[Corollary 4.4]{hk2}). For example, if $G$ is the fundamental group of a closed, oriented $3$-manifold, then $J \cong I(G)^*$. 

\begin{remark}
We are indebted to Mike Newman and \"Ozg\"un \"Unl\"u for showing that some of the groups $J_k$ do have deficiency zero (e.g.~at least for $3 \leq k \leq 6$).  It is a challenging open problem to decide whether this is true for all $k \geq 3$. Note that any group in this range which does not admit a balanced presentation would give a negative answer  to Wall's D2 problem.
\end{remark}

\begin{example} 
 Teichner \cite[3.4, 4.15]{Teichner:1988} proved that if $G$ is a finite $\bQ S^4$-group, then $d(H_1(G)) \leq 7$, and used a mapping torus construction to produce a non-abelian $\bQ S^4$-group $G$ with $d(H_1(G)) = 4$. 
\end{example}

\noindent\textbf{\S 6C. Algebraic Questions.}
For the rational homology $4$-spheres $M$ with $\pi_1(M)=G$ constructed in Theorem B, we have
$\pi_2(M) = H_2(\widetilde M;\bZ) = J \oplus J^*$, where $J$ is a minimal representative for $\Omega^3(\bZ)$ over $\ZG$,  with $\rk_{\bZ}(J) = |G|-1$. 

 Moreover, $J$ is locally and hence rationally isomorphic to the augmentation ideal $I(G)$, and the equivariant intersection form $s_M$ on $\pi_2(M) = J \oplus J^*$ is metabolic, with totally isotropic submodule $0 \oplus J^*$. Similar results hold for the higher dimenaional examples constructed in  Proposition \ref{prop:existence}. 
 
 More generally, for any finite group $G$, the existence of a   representative $J$ for  the stable module
 $\Omega^3(\bZ)$ with 
$\rk_{\bZ}(J) = |G|-1$ is equivalent to the condition $\mu_2(G) =1$ (see Proposition \ref{prop:threetwo}).

  \begin{question}  Is there a finite group $G$ with $\mu_2(G) = 1$, such that $G$ is  neither periodic nor admits a balanced presentation ?
 \end{question} 
 For any closed oriented $4$-manifold $M$ with finite fundamental group $G$, we have seen in \ref{prop:extclass}
 that $\pi_2(M)$ is \emph{stably} given by an extension of $\Omega^{-3}(\bZ)$ by $\Omega^3(\bZ)$ (see also \cite[Proposition 2.4]{hk2}) and that the extension class in 
  $\Ext^1_{\ZG}(\Omega^{-3}(\bZ), \Omega^{3}(\bZ)) \cong H_4(G; \bZ)$ is given by the image of the fundamental  class of $M$.
  For any rational homology $4$-sphere $M$
 with finite fundamental group $G$, the condition $\chi(M) =2$ implies that $\rk _{\bZ} (\pi_2(M)) = 2(|G| -1)$ and $H^0(G; \pi_2(M)) = 0$.

\begin{question}  If $M$ is a $\bQ S^4$, what is the (unstable)  structure of $\pi_2(M)$ as an integral representation ? Is the equivariant intersection form $s_M$ always metabolic (in the sense defined in \cite[\S 2]{hk6}) ?
\end{question}

Finally we point out that many questions in the representation theory of finite groups can be investigated by induction and restriction to proper subgroups. At present we do not see how to apply this technique in our setting.

\begin{question}  If $M$ is a $\bQ S^4$ -manifold with finite fundamental group $G$, then  its non-trivial finite coverings have  Euler characteristic $> 2$ (and hence are not $\bQ S^4$-manifolds).  How can we decide if  proper subgroups of $G$ are also $\bQ S^4$-groups ?
\end{question}

\section{Appendix A: The Proof of  Theorem \ref{prop:fakeembed} } \label{sec:seven}
In this section we give a direct construction of the minimal $4$-manifold needed for Theorem \ref{prop:fakeembed}. The idea is to use a handlebody thickening (see Definition \ref{def:seventwo}) of a finite $2$-complex $K$ instead of starting with an embedding of $K$ in $\bbR^5$. The advantage of this thickening is that we can identify the intersection form of its $4$-manifold boundary, and then apply a recent refinement of Freedman's work due to Teichner, Powell and Ray (see \cite[Corollary 1.4]{Powell:2020}).

\medskip
\noindent\textbf{\S 7A. Metabolic Forms.}\label{subsec:sevena}
To analyse the intersection form of the handlebody thickening we will need some algebraic preparations.

\begin{definition}
 Let $(E, [q])$ denote a  quadratic metabolic  form on a $\La$-module $E = N \oplus N^*$, where $N$ is a left $\La$-module, and  $N^*$ inherits a left $\La$-module structure via the standard anti-involution $ a \mapsto \bar a$ on $\La = \ZG$. Then
$$q((x, \phi), (x', \phi')) = \phi(x') + g(\phi, \phi'),$$
where $x, x' \in N$, $\phi, \phi' \in N^*$ and $g \in \Hom(N^* \otimes N^*, \La)$ is a sesquilinear form. We use the notation $(E, [q]) = \Met(N, g)$ for this metabolic form (see \cite[\S 2]{hk6} for  metabolic forms defined on a non-split extension of  $N$ and $N^*$).
\end{definition}

The associated hermitian form $h = q + q^*$ is non-singular, 
and $N \oplus 0 \subset E$ is a totally isotropic direct summand. More explicitly,
$$h((x, \phi), (x', \phi')) = \phi(x') +  \overline{\phi'(x)} + g(\phi, \phi') + \overline{g(\phi', \phi)}.$$
In our geometric setting, the metabolic forms arise on modules 
$E = H^2(K) \oplus H_2(K)$, where $K$ is a finite $2$-complex with fundamental group $G$ (take coefficients in $\La = \ZG$). If $G$ is finite, then $H^2(K; \La) \cong \Hom_\La(H_2(K), \La)$, and the definition above applies. If $G$ is infinite, then we slightly generalize our notion of metabolic form. 

\begin{definition}\label{def:sevenone}
Let $E = N \oplus \wN$, and let $\alpha\colon \wN\to N^*$ be a $\La$-module homomorphism. Define a \emph{generalized} metabolic form $(E, [q]):= \Met(N, \wN, \alpha, g)$ by the formula
$$q((x, \phi), (x', \phi')) = \alpha(\phi)(x') + g( \phi,  \phi'),$$
where $x, x' \in N$, $\phi, \phi' \in \wN$, and $g \in \Hom(\wN \otimes \wN, \La)$ is a given sesquilinear form. 
\end{definition}
\begin{example}\label{ex:sevenone}
For a finite $2$-complex $K$,  we have the evaluation map $\alpha\colon H^2(K) \to \Hom_\La(H_2(K), \La)$, which in general is neither injective nor surjective. In this case, we will  shorten the notation of Definition \ref{def:sevenone}  to  $(E, [q]) =\Met(H_2(K), g)$, where $E = H_2(K) \oplus H^2(K)$  as above.
\end{example}

Here are some preliminary remarks. 
\begin{itemize}
\item  Let $(E, [q])$ be  any quadratic form, and suppose that $U$ is finitely generated  submodule  on which the restriction $\lambda_0$ of $\lambda = q + q*$ to $U$ is non-singular. Then there is  is orthogonal splitting $(E, [q]) \cong U \perp L$. 
\begin{proof} Consider the following sequence
$$ 0 \to U \to E \xrightarrow{\adj \lambda} E^* \to U^* \to 0$$
where the composition $\adj \lambda_0\colon U \to U^*$ is an isomorphism by assumption. Therefore the inclusion $U \subset E$ is a split injection, and $E = U \perp L$, where $L := U^\perp$. To check this last point, note that a splitting map for the inclusion $i \colon U \to E$ is given by 
$$r := (\adj \lambda_0)^{-1} \circ i^* \circ \adj \lambda \circ i.$$
For $e \in E$, we compute 
$$\hphantom{xxxxx} \lambda(e - i(r(e)), i(h)) = \lambda (e, i(h)) - \lambda (i(r(e)), i(h)) = 
\lambda (e, i(h))  - \lambda_0(r(e), h) = 0$$
after substituting the formula for $r$. Therefore $E = U +  U^\perp$, and $U \cap U^\perp = 0$ since $\lambda_0$ is non-singular. 
\end{proof}
\item Let $(E, [q]) = \Met(H,g)$ be a metabolic quadratic form on $E = H\oplus H^*$, where $H = \La^r$ is a finitely generated free $\La$-module. Then $(E, [q]) \cong H(\La^r)$ is a hyperbolic form.
\begin{proof} This is a standard fact (see \cite[Lemma 5.3]{wall-book}).
\end{proof}
\end{itemize}
  
\noindent\textbf{\S 7B. A Handlebody Thickening.}
  Let $K$ be a finite $2$-complex with  $\pi_1(K) = G$. We construct a suitable thickening of $K$ to be used in the proof of Theorem \ref{prop:fakeembed}.

  \begin{definition}\label{def:seventwo}
 We first consider a $4$-dimensional parallelizable thickening $A(K)$ of $K$ constructed by attaching suitable $2$-handles to a connected sum $\Sharp \ell(S^1\times D^3)$. Then $A(K)$ is a compact $4$-manifold with boundary, and we let  $N(K) = A(K) \times I$. Note that $N(K)$ is a $5$-dimensional thickening of $K$, but may not embed in $\bbR^5$, and that $\bd N(K) = A(K) \cup -A(K)$ is the double of $A$ along the common boundary.
 \end{definition}
 
  Then  $ M:= \bd N(K)$ has the  intersection form $\lambda_M =\Met(H_2(K), g)$, since 
  $H_2(\bd N(K)) = H^2(K) \oplus H_2(K)$ and the direct summand $H^2(K)$ is totally isotropic (compare \cite[\S 2]{kreck-schafer1}). All the homology groups have coefficients in $\La := \ZG$.
  
  \begin{remark} Note that the quadratic intersection form  $\Met(H_2(K), g)$ is  a \emph{generalized }metabolic form (see  Example \ref{ex:sevenone}). It is non-singular if $\pi_1(K) = G$ is a finite group. If $G$ is infinite, this form has radical $H^2(G; \La)$, and the cokernel of its adjoint is $H^3(G; \La)$  by the exact sequence
  $$0 \to H^2(G;\La) \to H^2(M;\La) \xrightarrow{\adj \lambda_M} \Hom_{\La}(H_2(M), \La) \to H^3(G;\La) \to 0$$ 
  arising from the universal coefficient theorem.
  \end{remark}

\noindent\textbf{\S 7C. A Self-Homotopy Equivalence.} 
   Let $N(K)_r := N(K) \natural\, r(S^2 \times D^3)$ denote this new thickening of $K \vee r(S^2)$. We recall the construction of a useful homotopy self-equivalence of $K \vee r(S^2)$. 
   
   \begin{lemma}[\rm{\cite[Lemma 2.1]{Hambleton:2019}}] Let $X$ be a finite \fake, and let $u\colon K \subset X$ denote the $2$-skeleton of $X$. Then, for $r = b_3(X)$ there is a simple self-homotopy equivalence
    $h \colon K \vee r(S^2) \to K \vee r(S^2)$ inducing a simple homotopy equivalence  $f\colon X \vee r(S^2) \simeq K$.
   \end{lemma}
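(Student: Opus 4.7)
The plan is to trade the $r$ 3-cells of $X$ against the $r$ wedged-on 2-spheres by means of a simple self-equivalence of the 2-skeleton. Writing $X = K \cup \bigsqcup_{i=1}^r e^3_i$ with the $i$-th 3-cell attached by a class $\alpha_i \in \pi_2(K)$, the complex $X \vee r(S^2)$ has 2-skeleton $K \vee r(S^2)$ with
\[
\pi_2(K \vee r(S^2)) \;\cong\; \pi_2(K) \oplus \bigoplus_{i=1}^r \La \qquad (\La = \ZG),
\]
and its $r$ 3-cells are attached along the classes $(\alpha_i, 0)$.

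The D2 hypothesis supplies the key algebraic input: the vanishing $H^3(X;\La)=0$ forces $\bd_3 \colon C_3(\widetilde X)\to C_2(\widetilde X)$ to be split injective as a $\La$-module map. Since its image lies in $Z_2(\widetilde K) = \pi_2(K)$, the classes $\{\alpha_i\}$ freely generate a $\La$-direct summand of $\pi_2(K)$, and there is a retraction $\beta\colon \pi_2(K)\to\bigoplus_i\La$ with $\beta(\alpha_i)=e_i$. Using $\beta$, I would construct a self-equivalence $h\colon K\vee r(S^2)\to K\vee r(S^2)$ whose action on $\pi_2$ is the transvection $(v,w)\mapsto (v,\,w+\beta(v))$: concretely, $h$ is the identity on the $r(S^2)$-summands, and on $K$ each 2-cell's characteristic map is modified to absorb the $S^2$-bubbles prescribed by $\beta$ at each universal-cover lift. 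By Whitehead's theorem applied to the 2-complex $K\vee r(S^2)$, the induced isomorphism on $\pi_1$ and $\pi_2$ makes $h$ a homotopy equivalence, and it is simple because the chain-level effect is a lower-triangular transvection with identity diagonal (trivial Whitehead torsion).

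Applying $h$ converts the 3-cell attaching classes from $(\alpha_i,0)$ to $(\alpha_i, e_i)$, so that
\[
X \vee r(S^2) \;\simeq\; Y \;:=\; (K \vee r(S^2)) \cup_{(\alpha_i,e_i)} \bigsqcup_{i=1}^r D^3_i.
\]
In $Y$, each 3-cell is attached via a representative of $\alpha_i+e_i$, and the inclusion $K\hookrightarrow Y$ has cofiber $Y/K \simeq \bigvee_{i=1}^r\!\bigl(S^2_i\cup_{\mathrm{id}} D^3_i\bigr) = \bigvee_i D^3 \simeq *$. The collapses of the pairs $(D^3_i,S^2_i)$ are elementary, so $K\hookrightarrow Y$ is a simple homotopy equivalence. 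Composing with the simple equivalence $X\vee r(S^2)\simeq Y$ yields the desired $f\colon X\vee r(S^2)\simeq K$.

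The main obstacle is the third step: geometrically realizing the algebraic retraction $\beta$ as an honest $\La$-equivariant CW modification of $K$, while retaining control over the Whitehead torsion. One bypasses this either by a direct cellular construction exploiting the $\La$-splitting $\pi_2(K)=\langle\alpha_i\rangle\oplus W$ provided by D2, or by first producing the chain-level transvection and invoking Wall's realization machinery for 2-complexes (\cite[\S 3]{Wall:1965}, \cite[Corollary 2.4]{Hambleton:2019}) to lift it to a simple CW equivalence.
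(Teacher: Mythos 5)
Your proposal is correct and follows essentially the same route as the paper, which simply extracts from \cite[Lemma 2.1]{Hambleton:2019} the self-equivalence $h$ of $K \vee r(S^2)$ built from the D2-splitting $\pi_2(K) \cong \pi_2(X)\oplus C_3(X)$ and then cancels the repositioned $3$-cells against the wedge spheres; your transvection $(v,w)\mapsto(v,w+\beta(v))$ is a cosmetic variant of the swap $e_i\leftrightarrow f_i$ used there, and both are products of elementary matrices, so simplicity is unaffected. The obstacle you flag in your last paragraph is not a genuine one: since $H^3(X;\La)=0$ makes $\bd_3\colon C_3(\wX)\to C_2(\wK)$ split injective as a map of f.g.\ free modules, the span of the $\alpha_i$ is a direct summand of $C_2(\wK)$ itself (not merely of $\pi_2(K)$), so $\beta$ extends to a $\La$-map $C_2(\wK)\to\La^r$ and the chain-level transvection is realized by a cellular self-map in the standard way.
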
 
   \begin{proof}
We recall some  of the notation from \cite[\S 2]{Hambleton:2019}. There is an identification 
\eqncount
\begin{equation}\label{eq:twotwo}
 \pi_2(K \vee r(S^2)) \cong \pi_2(K) \oplus \La^r\cong \pi_2(X) \oplus C_3(X) \oplus F
 \end{equation}
and we fix free $\La$-bases $\{e_1, \dots, e_r\}$  for $C_3(X) \cong \La^r$, and  $\{f_1, \dots, f_r\}$ for $F \cong \La^r$. 
The same notation $\{e_i\}$ and $\{f_j\}$ will also be used for continuous maps $S^2 \to K \vee r(S^2)$ in the homotopy classes 
of $ \pi_2(K \vee r(S^2))$ defined by these basis elements. Notice that the maps $f_j\colon S^2 \to K \vee r(S^2)$ may be chosen
to represent the inclusions of the $S^2$ wedge factors.

An examination of the proof of  \cite[Lemma 2.1]{Hambleton:2019} shows that the simple homotopy equivalence 
 $f\colon X \vee r(S^2) \simeq K$  is obtained by extending a certain simple homotopy equivalence 
$h \colon K \vee r(S^2) \to K \vee r(S^2)$ over the (stabilized) inclusion
$$\xymatrix{  X \vee r(S^2) \ar@{-->}[rr]^{h'} & &K' \\  K \vee r(S^2)\ar@{^{(}->}[u]_{u \vee \id} \ar[rr]^h& &  K \vee r(S^2) \ar@{^{(}->}[u]
}$$  
 by attaching the $3$-cells of X  in domain by the  maps $e_i =[\bd D^3_i]$, $1\leq i \leq r$, and  $3$-cells in the range via the maps $f_i =[\bd D^3_i]$, $1\leq i \leq r$, which homotopically cancel the $S^2$ wedge factors, to obtain a complex $K' \simeq K$.

 Then  we have $h \circ [\bd D^3_i ]= f_i$ by the construction of $h$ (see \cite[p.~364]{Hambleton:2019}).  Hence we can extend  $h$ over $X$ by the identity on the $3$-cells attached  in domain and range along the maps $\{f_i\colon S^2 \to  K \vee r(S^2)\}$. We obtain
a map
$$h'\colon X\vee r(S^2)  \to K' := K \vee r(S^2)\cup \bigcup\{ D^3_i : [\bd D^3_i] = f_i, 1\leq i \leq r\} $$
extending $h$. From the construction of the map $h$ (see 
\cite[p.~364]{Hambleton:2019})  follows that $h'$ is a (simple) homotopy equivalence, which induces a simple homotopy equivalence  $f\colon X \vee r(S^2) \simeq K$, after composition with the obvious projection $K' \to K$.
   \end{proof}
   
\noindent\textbf{\S 7D. Topological Surgery.} 
  We will now apply some results of topological surgery due to Freedman. Recall that  $N(K)$ is the $5$-dimensional thickening of $K$ constructed above,  and $N(K)_r = N(K) \natural\, r(S^2 \times D^3)$ is its stabilization. We have introduced the notation $M = \bd N(K)$, and let $M_r = \bd N(K)_r = \bd N(K) \# r(S^2 \times S^2)$,
  \begin{lemma}\label{lem:seveneight}  Suppose that $\pi_1(K)$ is a good group. There is a self-homeomorphism $\beta\colon \bd N(K)_r \approx \bd N(K)_r$ extending the simple homotopy self-equivalence
  $h \colon K \vee r(S^2) \to K \vee r(S^2)$.
  \end{lemma}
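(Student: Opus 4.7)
The plan is to extend $h$ first to a self-homotopy equivalence of the $5$-dimensional thickening $N(K)_r$, then restrict it to the boundary $4$-manifold $M_r = \bd N(K)_r$, and finally invoke Freedman-type topological surgery to promote the resulting simple self-homotopy equivalence into a self-homeomorphism.

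For the first step, observe that the inclusion $K \vee r(S^2) \hookrightarrow N(K)_r$ is a simple homotopy equivalence, since $N(K)_r = A(K)\times I \,\natural\, r(S^2\times D^3)$ admits $K \vee r(S^2)$ as a spine and collapses onto it. Via this inclusion, the simple self-equivalence $h$ extends to a simple self-homotopy equivalence $H\colon N(K)_r \to N(K)_r$. Using a collar neighbourhood of $\bd N(K)_r$ and a standard approximation argument, we may homotope $H$ so as to preserve the boundary, and its restriction $\beta_0 := H|_{M_r}\colon M_r \to M_r$ is then a simple self-homotopy equivalence of a closed $4$-manifold with fundamental group $G$.

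To promote $\beta_0$ to a homeomorphism, I would appeal to Freedman's topological surgery theorem in the refined form of Powell--Ray--Teichner \cite[Corollary 1.4]{Powell:2020}. Since $\pi_1(M_r) = G$ is good by hypothesis, surgery works in dimension four on $M_r$. The Kirby--Siebenmann invariant is automatically preserved by any homotopy self-equivalence, being a topological invariant, so the only remaining issue is the vanishing of the associated surgery obstruction in $L_5^s(\ZG)$. Once $\beta_0$ has been shown to be homotopic to a homeomorphism $\beta\colon M_r \approx M_r$, the required extension property, namely that $\beta$ realizes $h$ after the natural collapse $M_r \to N(K)_r \simeq K \vee r(S^2)$, follows at once from the construction of $H$.

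The principal obstacle is the vanishing of the surgery obstruction of $\beta_0$. Here the essential structural feature is that $M_r \cong M \# r(S^2 \times S^2)$ comes equipped with $r$ hyperbolic summands in its intersection form, contributed by the stabilizing handles $r(S^2 \times D^3)$ used to construct $N(K)_r$. In the spirit of the topological $s$-cobordism theorem, these hyperbolic stabilizations provide exactly the algebraic room needed to cancel the surgery kernel of a normal bordism from $\id_{M_r}$ to $\beta_0$. The refinement in \cite{Powell:2020} is tailored to this situation of a good fundamental group together with a supply of $S^2\times S^2$-summands, and the specific $h$ produced by \cite[Lemma 2.1]{Hambleton:2019}, arising from a cellular cancellation of basis elements as described in \eqref{eq:twotwo}, is constructed precisely so that the resulting $L$-theory element becomes trivial after the built-in stabilization. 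This is the step where the good-group hypothesis is essential.
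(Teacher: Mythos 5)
Your overall strategy --- working directly on the closed $4$--manifold $M_r=\bd N(K)_r$ and trying to realize a self-homotopy equivalence of $M_r$ by a homeomorphism via $4$--dimensional surgery --- is not the paper's route, and as written it has genuine gaps. First, the restriction step is not ``standard'': a self-homotopy equivalence $H$ of the compact manifold-with-boundary $N(K)_r$ can be deformed to a map of pairs, but there is no general reason its restriction to $\bd N(K)_r$ is again a (simple) homotopy equivalence; you would need to upgrade $H$ to a homotopy equivalence \emph{of pairs}, which you do not do. Second, and more seriously, even granting a simple self-equivalence $\beta_0$ of $M_r$, homotoping it to a homeomorphism requires (i) the vanishing of its normal invariant in $[M_r, G/TOP]$, which you never address (the Kirby--Siebenmann invariant is only a small piece of this), and (ii) the vanishing of a surgery obstruction in $L_5^s(\bZ G)$, which you explicitly identify as ``the principal obstacle'' and then resolve only by asserting that the stabilizing hyperbolic summands ``provide exactly the algebraic room needed.'' That is not an argument: the $L$--group acts on the structure set and its action need not be trivial, and the presence of $S^2\times S^2$ summands in the intersection form does not by itself kill either obstruction. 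Finally, \cite[Corollary 1.4]{Powell:2020} is a splitting/stable-homeomorphism statement (used later in the appendix to split off $S^2\times S^2$ summands); it is not a tool for realizing self-equivalences of a fixed $4$--manifold by homeomorphisms.

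The paper avoids all of this by working one dimension up: since $\pi_1(K)$ is good, the $5$--dimensional topological $s$--cobordism theorem \cite[Theorem 7.1A]{freedman-quinn1} lets one extend the simple homotopy self-equivalence $h$ of the spine $K\vee r(S^2)$ to a self-\emph{homeomorphism} $\hat h$ of the compact $5$--manifold $N(K)_r$, and then $\beta:=\bd\hat h$ is the desired self-homeomorphism of the boundary, automatically compatible with $h$. The good-group hypothesis enters only through the $5$--dimensional $s$--cobordism theorem, not through any $4$--dimensional surgery obstruction calculation. If you want to keep your boundary-level approach, you would have to supply the missing normal-invariant and $L^s_5$ computations, which is considerably harder than the paper's one-step argument.
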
 
  \begin{proof} 
    Since $\pi_1(K)$ is a good group, the topological $s$-cobordism theorem \cite[Theorem 7.1A]{freedman-quinn1}, implies that the given simple homotopy self-equivalence $h \colon K \vee r(S^2) \to K \vee r(S^2)$ extends to a  self-homeomorphism $\hat h \colon N(K)_r \to N(K)_r$. This follows since we may assume (by general position) that  the image $h( K \vee r(S^2)) \subset N(K))_r$ is embedded in the interior of the $5$-manifold  $ N(K))_r$. Since $h$ is a simple homotopy self-equivalence, the complement of a small tubular neighbourhood of  $h( K \vee r(S^2))$ will then be an $s$-cobordism, and hence a product. Since $ N(K))_r$ is a thickening of $K \vee r(S^2)$, we can construct the self-homeomorphism $\hat h$ by identifying the tubular neighbourhoods in domain and range, and then using the product structures. Let $\beta:=\bd \hat h$ denote the restriction of $\hat h$ to $\bd N(K)_r$.
    \end{proof}

  We now combine these ingredients. Recall that $X \vee r(S^2) \simeq K$, so that $H_2(K) \cong H_2(X)\oplus H$, where $H \cong \La^r$.
 We have the isomorphism
  \eqncount
 \begin{equation}\label{eq:sevenone}
 H_2(N(K)_r) = H_2(K \vee r(S^2)) \cong H_2(K) \oplus F\cong H_2(X) \oplus H \oplus F,
 \end{equation}
where $F\cong \La^r$. We fix free $\La$-bases $\{e_1, \dots, e_r\}$  for $H \cong \La^r$, and  $\{f_1, \dots, f_r\}$ for $F \cong \La^r$. It follows that $M_r := \bd N(K)_r$ has intersection form 
$$\lambda_{M_r} = \lambda_{M} \oplus H(F) = \Met(H_2(K), g) \oplus H(\La^r),$$
 where the classes
 $\{f_1, f_2, \dots, f_r\}$ and their duals provide a standard hyperbolic base for the
  second summand $H(F)$.
 By construction, $h_*(e_i) = f_i$, $h_*(f_i) = e_i$ for $1 \leq i \leq r$, and $h_*(x) = x$ for all $x \in H_2(X)$.
 Note that $H^2(K) = H^2(X)\oplus H^*$ is totally isotropic under $\lambda_M$, 
 and orthogonal to the summand $H(F)$.

\begin{lemma} There is closed topological $4$-manifold $M_0$ and a homeomorphism 
$M_r = \bd N(K)_r \approx M_0 \Sharp 2r(S^2 \times S^2)$, such that $\chi(M_0) = 2 \chi(X)$.
\end{lemma}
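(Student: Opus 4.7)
The plan is to convert the algebraic decomposition
\[\lambda_{M_r}\cong\Met(H_2(X),g)\perp H(\La^{2r})\]
into a topological connected-sum decomposition. First, I would assemble $4r$ spherical classes in $H_2(M_r;\La)$ spanning a free $\La$-submodule $U$ of rank $4r$ on which $\lambda_{M_r}$ restricts to the standard hyperbolic form $H(\La^{2r})$. The first $r$ hyperbolic pairs come from the standard decomposition $M_r=M\Sharp r(S^2\times S^2)$, with primary classes $\{f_1,\dots,f_r\}$ together with their hyperbolic duals. The remaining $r$ pairs come from the self-homeomorphism $\beta$ of the previous lemma: since $\beta_*(f_i)=e_i$, applying $\beta$ to the standard embedded $r(S^2\times S^2)$ summand yields $r$ further embedded hyperbolic pairs whose primary classes are $\{e_1,\dots,e_r\}\subset H\subset H_2(K)$.

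Second, by the first preliminary remark above, the restriction of $\lambda_{M_r}$ to $U$ is nonsingular (being hyperbolic), so $\lambda_{M_r}\cong U\perp U^\perp$ orthogonally. A direct computation of $U^\perp$, using the decomposition $H_2(K)=H_2(X)\oplus H$ with $H\cong\La^r$ free, the splitting $\lambda_{M_r}=\Met(H_2(K),g)\oplus H(F)$, and the second preliminary remark applied to peel off the free hyperbolic summands contributed by $H$ and $F$, identifies $U^\perp\cong\Met(H_2(X),g)$ for the induced sesquilinear form.

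Finally, the hard geometric step is to realize this algebraic splitting by $2r$ pairwise disjoint locally flat embedded copies of $S^2\times S^2$ inside $M_r$ whose fundamental classes span $U$. The two batches of $r$ embedded pairs above are each individually embedded, but together they may intersect transversely across the two batches. Because $\pi_1(K)$ is good by hypothesis, I would invoke the refined sphere-embedding theorem of Powell-Ray-Teichner \cite[Corollary 1.4]{Powell:2020} to replace these immersed spheres by $2r$ pairwise disjoint locally flat embeddings with trivial normal bundles realizing the same intersection data. The complement of an open regular neighborhood of these embedded spheres is then the desired closed topological $4$-manifold $M_0$, yielding a homeomorphism $M_r\approx M_0\Sharp 2r(S^2\times S^2)$ with $\lambda_{M_0}\cong U^\perp\cong\Met(H_2(X),g)$. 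The main obstacle is precisely this last promotion from a family of immersed spheres realizing the algebraic summand to a family of pairwise disjoint locally flat embeddings with trivial normal bundles, and that is exactly where the goodness hypothesis on $\pi_1(K)$ is essential.
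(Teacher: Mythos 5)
Your proposal is correct and follows essentially the same route as the paper: the same rank-$4r$ submodule $U$ spanned by $F\oplus F^*$ together with its image under $\beta_*$, the same identification $U^\perp\cong\Met(H_2(X),g)$, and the same final appeal to \cite[Corollary 1.4]{Powell:2020} using goodness of $\pi_1(K)$. The only point to phrase carefully is that $\lambda_{M_r}|_U$ is not literally the standard hyperbolic form on your chosen basis (the classes $\beta_*(f_i^*)$ may pair nontrivially with the $f_j^*$, as you note); it is a metabolic form with free Lagrangian $H\oplus F$, hence isomorphic to $H(\La^{2r})$ by the second preliminary remark, which is exactly how the paper argues.
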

\begin{proof}  We have the decomposition:
$$H_2(M_r) = H^2(X) \oplus H_2(X) \oplus H \oplus H^*\oplus F\oplus F^*$$ in the notation introduced in \eqref{eq:sevenone}.

The metabolic intersection form $\lambda_{M_r} = \lambda_{M} \oplus H(F)$  admits a self-isometry $\beta_*$ (induced from the map $\beta$ constructed in Lemma \ref{lem:seveneight}) extending the map $h_*\colon H_2(K) \oplus \La^r \to H_2(K) \oplus \La^r$ constructed above. Since the images of the basis elements $h_*(e_i) = f_i \in F$ have dual classes $f^*_i \in 
F^*$, it follows that 
\eqncount
\begin{equation}\label{eq:seventwo}
 \lambda_{M_r}(\beta_*(f^*_i), e_j) = 
\lambda_{M_r}(\beta_*(f^*_i), \beta_*(f_j)) =
 \lambda_{M_r}(f^*_i, f_j) = \delta_{ij}.
 \end{equation}
 Similarly, we have the formulas
  \eqncount
 \begin{equation}\label{eq:seventhree}
  \lambda_{M_r}(\beta_*(f^*_i), \beta_*(f^*_j)) = 
 \lambda_{M_r}(f^*_i, f^*_j) = 0, \text{\ for all\ } 1\leq i, j \leq r,
 \end{equation}
 and 
 \eqncount
\begin{equation}\label{eq:sevenfour}
\lambda_{M_r}(e_i, e_j) =  \lambda_{M_r}(\beta_*(f_i),\beta_*( f_j)) = \lambda_{M_r}(f_i,f_j)= 0, \text{\ for all\ } 1\leq i, j \leq r.
 \end{equation}

Let $U = \la \beta_*(F^*); H; F\oplus F^*\ra$ denote the submodule of $H_2(M_r)$ generated by $H(F) = F \oplus F^*$, together with the classes $\{\beta_*(f^*_i)\}$, and the classes $\{e_i\}$, for $1\leq i \leq r$. Then we claim that $U \cong \beta_*(F^*) \oplus H  \oplus H(F)$ is a \emph{free direct summand} of $H_2(M_r)$, with indicated basis elements,   on which the restriction of $\lambda_{M_r}$ is \emph{a non-singular  form}. 
 
\medskip   

We  check that  $U \cong \beta_*(F^*) \oplus H \oplus F\oplus F^*$ is a free submodule (of rank $4r$) in $H_2(M_r)$
 by first showing that 
 $$\beta_*(F^*)  \cap (H \oplus F \oplus F^*) = 0.$$
 We then observe that the restriction  $\lambda_U$ of the intersection form to  $U \subset H_2(M_r)$ is non-singular. It follows that $(U, \lambda_U)$ is an orthogonal direct summand of $(H_2(M_r), \lambda_{M_r})$.

\medskip
Here are the details: suppose that $u \in \beta_*(F^*) \cap (H \oplus F \oplus F^*)$. 
We can express 
 $$u = \sum a_i \beta_*(f_i^*)  = \sum b_i e_i +\sum c_i f_i + \sum d_i f_i^*$$
 as a $\La$-linear combinations of the basis elements. Then by the formula \eqref{eq:seventwo} above, we have  $ \lambda_{M_r}(\beta_*(f^*_i), e_j)  = \delta_{ij}$.  and hence
 $\lambda_{M_r}(u, e_i)  = a_i$. 
 Since $H$ is totally isotropic by \eqref{eq:sevenfour},
  the summand $H \oplus F \oplus F^*$ is orthogonal to $H$, and it follows that $\lambda_{M_r}(u, e_i) = 0$. Hence all the $a_i$ are zero and $u=0$.

 Now let $\lambda_U$ denote the restriction of $\lambda_{M_r}$ to $U$. The submodule $H \oplus F$ is a totally isotropic, based  free direct summand of rank $2r$ in $U$, and the dual basis elements under $\lambda_U$ form the basis of the complementary direct summand $\beta_*(F^*) \oplus F^*$. Hence $\lambda_U$ is non-singular, and in fact
 $\lambda_U \cong \Met(H \oplus F, g)$, where $g$ encodes the intersections of $\beta_*(F^*)$ with  $F^*$ (which may be non-zero). In this situation, it follows that $\lambda_U \cong H(\La^{2r})$ is isomorphic to a non-singular hyperbolic form (see \cite[Lemma 5.3]{wall-book}).

 \medskip
 Hence there is a splitting for the intersection form 
$$\lambda_{M_r} = \Met(H_2(K), g) \perp H(F)
\cong (E, \lambda_0)\perp \lambda_U$$
with respect to the orthogonal complement $(E, \lambda_0)=(\lambda_U)^\perp$. Since $M$ has good fundamental group and $\lambda_{M_r}$ contains the hyperbolic subform
$$ \lambda_U 
\cong H(\La^{2r}) \cong H(\La^r) \perp H(F),$$
 topological surgery \cite[Corollary 1.4]{Powell:2020} shows that $M \approx M_0 \Sharp 2r(S^2 \times S^2)$. The resulting closed topological $4$-manifold $M_0$ has $\chi(M_0) = 2 \chi(X)$. 
 \end{proof}
 
The construction of the manifold $M(X):= M_0$ completes the proof of Theorem  \ref{prop:fakeembed}.  \qed


\begin{thebibliography}{10}

\bibitem{Adem:2019}
A.~Adem and I.~Hambleton, \emph{Free finite group actions on rational homology
  3-spheres}, Forum Math. Sigma \textbf{7} (2019), Paper No. e28, 23.

\bibitem{Adem:2004}
A.~Adem and R.~J. Milgram, \emph{Cohomology of finite groups}, second ed.,
  Grundlehren der Mathematischen Wissenschaften [Fundamental Principles of
  Mathematical Sciences], vol. 309, Springer-Verlag, Berlin, 2004.

\bibitem{Brown:1982}
K.~S. Brown, \emph{Cohomology of groups}, Graduate Texts in Mathematics,
  vol.~87, Springer-Verlag, New York-Berlin, 1982.

\bibitem{Campbell:1980}
C.~M. Campbell and E.~F. Robertson, \emph{A deficiency zero presentation for
  {${\rm SL}(2,\,p)$}}, Bull. London Math. Soc. \textbf{12} (1980), 17--20.

\bibitem{Campbell:1980a}
\bysame, \emph{On {$2$}-generator {$2$}-relation soluble groups}, Proc.
  Edinburgh Math. Soc. (2) \textbf{23} (1980), 269--273.

\bibitem{Cossey:1974}
J.~Cossey, K.~W. Gruenberg, and L.~G. Kov\'{a}cs, \emph{The presentation rank
  of a direct product of finite groups}, J. Algebra \textbf{28} (1974),
  597--603.

\bibitem{Eilenberg:1957}
S.~Eilenberg and T.~Ganea, \emph{On the {L}usternik-{S}chnirelmann category of
  abstract groups}, Ann. of Math. (2) \textbf{65} (1957), 517--518.

\bibitem{Epstein:1961}
D.~B.~A. Epstein, \emph{Finite presentations of groups and {$3$}-manifolds},
  Quart. J. Math. Oxford Ser. (2) \textbf{12} (1961), 205--212.

\bibitem{freedman-quinn1}
M.~H. Freedman and F.~Quinn, \emph{Topology of 4-manifolds}, Princeton
  University Press, Princeton, NJ, 1990.

\bibitem{Gersten:1966}
S.~M. Gersten, \emph{A product formula for {W}all's obstruction}, Amer. J.
  Math. \textbf{88} (1966), 337--346.

\bibitem{Hambleton:2006}
I.~Hambleton, \emph{Some examples of free actions on products of spheres},
  Topology \textbf{45} (2006), 735--749.

\bibitem{Hambleton:2019}
\bysame, \emph{Two remarks on {W}all's {D}2 problem}, Math. Proc. Cambridge
  Philos. Soc. \textbf{167} (2019), 361--368.

\bibitem{hk2}
I.~Hambleton and M.~Kreck, \emph{On the classification of topological
  $4$-manifolds with finite fundamental group}, Math. Ann. \textbf{280} (1988),
  85--104.

\bibitem{hk6}
\bysame, \emph{Cancellation, elliptic surfaces and the topology of certain
  four-manifolds}, J. Reine Angew. Math. \textbf{444} (1993), 79--100.

\bibitem{hk4}
\bysame, \emph{Cancellation of lattices and finite two-complexes}, J. Reine
  Angew. Math. \textbf{442} (1993), 91--109.

\bibitem{Hambleton:2013}
I.~Hambleton, S.~Pamuk, and E.~Yal\c{c}\i~n, \emph{Equivariant {CW}-complexes
  and the orbit category}, Comment. Math. Helv. \textbf{88} (2013), 369--425.

\bibitem{Hambleton:2009}
I.~Hambleton and O.~\"{U}nl\"{u}, \emph{Examples of free actions on products of
  spheres}, Q. J. Math. \textbf{60} (2009), 461--474.

\bibitem{Hambleton:2010}
\bysame, \emph{Free actions of finite groups on {$S^n\times S^n$}}, Trans.
  Amer. Math. Soc. \textbf{362} (2010), 3289--3317.

\bibitem{Hausmann:1985}
J.-C. Hausmann and S.~Weinberger, \emph{Caract\'{e}ristiques d'{E}uler et
  groupes fondamentaux des vari\'{e}t\'{e}s de dimension {$4$}}, Comment. Math.
  Helv. \textbf{60} (1985), 139--144.

\bibitem{Hopf:1926}
H.~Hopf, \emph{Zum {C}lifford-{K}leinschen {R}aumproblem}, Math. Ann.
  \textbf{95} (1926), 313--339.

\bibitem{Jacobinski:1968}
H.~Jacobinski, \emph{Genera and decompositions of lattices over orders}, Acta
  Math. \textbf{121} (1968), 1--29.

\bibitem{Johnson:1970}
D.~L. Johnson and J.~W. Wamsley, \emph{Minimal relations for certain finite
  {$p$}-groups}, Israel J. Math. \textbf{8} (1970), 349--356.

\bibitem{Kirk:2005}
P.~Kirk and C.~Livingston, \emph{The {H}ausmann-{W}einberger 4-manifold
  invariant of abelian groups}, Proc. Amer. Math. Soc. \textbf{133} (2005),
  1537--1546.

\bibitem{kreck-schafer1}
M.~Kreck and J.~A. Schafer, \emph{Classification and stable classification of
  manifolds: some examples}, Comment. Math. Helv. \textbf{59} (1984), 12--38.

\bibitem{Lam:1968}
T.-Y. Lam, \emph{Artin exponent of finite groups}, J. Algebra \textbf{9}
  (1968), 94--119.

\bibitem{Lott:2007}
J.~Lott, \emph{The work of {G}rigory {P}erelman}, International {C}ongress of
  {M}athematicians. {V}ol. {I}, Eur. Math. Soc., Z\"urich, 2007, pp.~66--76.

\bibitem{Madsen:1976}
I.~Madsen, C.~B. Thomas, and C.~T.~C. Wall, \emph{The topological spherical
  space form problem. {II}. {E}xistence of free actions}, Topology \textbf{15}
  (1976), 375--382.

\bibitem{Madsen:1983a}
\bysame, \emph{Topological spherical space form problem. {III}. {D}imensional
  bounds and smoothing}, Pacific J. Math. \textbf{106} (1983), 135--143.

\bibitem{Madsen:1983}
I.~Madsen, \emph{Reidemeister torsion, surgery invariants and spherical space
  forms}, Proc. London Math. Soc. (3) \textbf{46} (1983), 193--240.

\bibitem{Mennicke:1989}
J.~L. Mennicke and B.~H. Neumann, \emph{More on finite groups with few defining
  relations}, J. Austral. Math. Soc. Ser. A \textbf{46} (1989), 132--136.

\bibitem{Milgram:1985}
R.~J. Milgram, \emph{Evaluating the {S}wan finiteness obstruction for periodic
  groups}, Algebraic and geometric topology ({N}ew {B}runswick, {N}.{J}.,
  1983), Lecture Notes in Math., vol. 1126, Springer, Berlin, 1985,
  pp.~127--158.

\bibitem{milnor2}
J.~Milnor, \emph{Groups which act on {$S\sp n$} without fixed points}, Amer. J.
  Math. \textbf{79} (1957), 623--630.

\bibitem{Neumann:1989}
B.~H. Neumann, \emph{Yet more on finite groups with few defining relations},
  Group theory ({S}ingapore, 1987), de Gruyter, Berlin, 1989, pp.~183--193.

\bibitem{Pedersen:1980}
E.~K. Pedersen and A.~Ranicki, \emph{Projective surgery theory}, Topology
  \textbf{19} (1980), 239--254.

\bibitem{Powell:2020}
M.~Powell, A.~Ray, and P.~Teichner, \emph{The 4-dimensional disc embedding
  theorem and dual spheres}, arxiv:2006.05209 (math.GT), 2020.

\bibitem{Roiter:1966}
A.~V. Ro\u{\i}ter, \emph{Integer-valued representations belonging to one
  genus}, Izv. Akad. Nauk SSSR Ser. Mat. \textbf{30} (1966), 1315--1324.

\bibitem{Sag:1973}
T.~W. Sag and J.~W. Wamsley, \emph{Minimal presentations for groups of order
  {$2^{n}$}, {$n\leq 6$}}, J. Austral. Math. Soc. \textbf{15} (1973), 461--469.

\bibitem{Sag:1973a}
\bysame, \emph{On computing the minimal number of defining relations for finite
  groups}, Math. Comp. \textbf{27} (1973), 361--368.

\bibitem{Searby:1972}
D.~G. Searby and J.~W. Wamsley, \emph{Minimal presentations for certain
  metabelian groups}, Proc. Amer. Math. Soc. \textbf{32} (1972), 342--348.

\bibitem{Swan:1960a}
R.~G. Swan, \emph{Induced representations and projective modules}, Ann. of
  Math. (2) \textbf{71} (1960), 552--578.

\bibitem{Swan:1960b}
\bysame, \emph{Periodic resolutions for finite groups}, Ann. of Math. (2)
  \textbf{72} (1960), 267--291.

\bibitem{Swan:1965}
\bysame, \emph{Minimal resolutions for finite groups}, Topology \textbf{4}
  (1965), 193--208.

\bibitem{Teichner:1988}
P.~Teichner, \emph{Rationale {H}omologie 4-{S}ph\"aren}, Master's thesis,
  University of Mainz, 1988.

\bibitem{Thomas:1971}
C.~B. Thomas and C.~T.~C. Wall, \emph{The topological spherical space form
  problem. {I}}, Compositio Math. \textbf{23} (1971), 101--114.

\bibitem{Wall:1965}
C.~T.~C. Wall, \emph{Finiteness conditions for {${\rm CW}$}-complexes}, Ann. of
  Math. (2) \textbf{81} (1965), 56--69.

\bibitem{Wall:1966}
\bysame, \emph{Finiteness conditions for {${\rm CW}$} complexes. {II}}, Proc.
  Roy. Soc. London Ser. A \textbf{295} (1966), 129--139.

\bibitem{wall-pc1}
\bysame, \emph{Poincar\'e complexes. {I}}, Ann. of Math. (2) \textbf{86}
  (1967), 213--245.

\bibitem{Wall:1978}
\bysame, \emph{Free actions of finite groups on spheres}, Algebraic and
  geometric topology ({P}roc. {S}ympos. {P}ure {M}ath., {S}tanford {U}niv.,
  {S}tanford, {C}alif., 1976), {P}art 1, Proc. Sympos. Pure Math., XXXII, Amer.
  Math. Soc., Providence, R.I., 1978, pp.~115--124.

\bibitem{Wall:1979}
\bysame, \emph{Periodic projective resolutions}, Proc. London Math. Soc. (3)
  \textbf{39} (1979), 509--553.

\bibitem{wall-book}
\bysame, \emph{Surgery on compact manifolds}, second ed., American Mathematical
  Society, Providence, RI, 1999, Edited and with a foreword by A. A. Ranicki.

\bibitem{Wamsley:1970}
J.~W. Wamsley, \emph{The deficiency of metacyclic groups}, Proc. Amer. Math.
  Soc. \textbf{24} (1970), 724--726.

\bibitem{Webb:2016}
P.~Webb, \emph{A course in finite group representation theory}, Cambridge
  Studies in Advanced Mathematics, vol. 161, Cambridge University Press,
  Cambridge, 2016.

\end{thebibliography}

\providecommand{\bysame}{\leavevmode\hbox to3em{\hrulefill}\thinspace}
\providecommand{\MR}{\relax\ifhmode\unskip\space\fi MR }
\providecommand{\MRhref}[2]{%
  \href{http://www.ams.org/mathscinet-getitem?mr=#1}{#2}
}
\providecommand{\href}[2]{#2}

\end{document}